\documentclass{amsart}
\usepackage{amssymb, enumerate, xspace, graphicx, url}
\usepackage[latin1]{inputenc}
\usepackage[all]{xy}
\SelectTips{cm}{}



\numberwithin{equation}{section}

\setcounter{tocdepth}{1}

\numberwithin{subsection}{section}

\allowdisplaybreaks[1]


\newenvironment{enumeratea}
{\begin{enumerate}[\upshape (a)]}
{\end{enumerate}}

\newenvironment{enumerate1}
{\begin{enumerate}[\upshape (1)]}
{\end{enumerate}}

\newenvironment{enumeratei}
{\begin{enumerate}[\upshape (i)]}
{\end{enumerate}}

\newtheorem*{namedtheorem}{\theoremname}
\newcommand{\theoremname}{testing}

\newtheorem{theorem}{Theorem}[section]
\newtheorem{proposition}[theorem]{Proposition}
\newtheorem{proposition-definition}[theorem]
{Proposition-Definition}
\newtheorem{corollary}[theorem]{Corollary}
\newtheorem{lemma}[theorem]{Lemma}

\theoremstyle{definition}
\newtheorem{definition}[theorem]{Definition}

\newtheorem{example}[theorem]{Example}
\newtheorem{examples}[theorem]{Examples}
\newtheorem{remark}[theorem]{Remark}

\theoremstyle{remark}



 \newcommand\cB{\mathcal{B}}
\newcommand\cC{\mathcal{C}} \newcommand\cD{\mathcal{D}}
\newcommand\cE{\mathcal{E}}

\newcommand\cM{\mathcal{M}} \newcommand\cN{\mathcal{N}}
\newcommand\cO{\mathcal{O}}

\renewcommand\AA{\mathbb{A}}

\newcommand\GG{\mathbb{G}}

 \newcommand\NN{\mathbb{N}}

 \newcommand\ZZ{\mathbb{Z}}

\newcommand\rmm{\mathrm{m}} 
 \newcommand\rmp{\mathrm{p}}


\newcommand\arr{\ifinner\to\else\longrightarrow\fi}

\newcommand\arrto{\ifinner\mapsto\else\longmapsto\fi}
\newcommand\larr{\longrightarrow}

\newcommand{\xarr}{\xrightarrow}

\newcommand\op{^{\mathrm{op}}}

\newcommand\eqdef{\overset{\mathrm{\scriptscriptstyle def}} =}

\renewcommand\th{^\text{th}}

\def\displaytimes_#1{\mathrel{\mathop{\times}\limits_{#1}}}

\def\displayotimes_#1{\mathrel{\mathop{\bigotimes}\limits_{#1}}}

\renewcommand\hom{\operatorname{Hom}}

\newcommand\aut{\operatorname{Aut}}

\newcommand\pic{\operatorname{Pic}}

\newcommand\spec{\operatorname{Spec}}

\newcommand\id{\mathrm{id}}

\newcommand{\cat}[1]{(\mathrm{#1})}

\newcommand\dash{\nobreakdash-\hspace{0pt}}


\newdir{ >}{{}*!/-5pt/@{>}}

\newcommand\doublelong[2]{\mathbin{\xymatrix{{}\ar@<3pt>[r]^{#1}
\ar@<-3pt>[r]_{#2}&}}}

\newcommand{\underhom}
{\mathop{\underline{\mathrm{Hom}}}\nolimits}

\newcommand{\underspec}
{\mathop{\underline{\mathrm{Spec}}}\nolimits}

\newlength{\ignora}

\newcounter{stepcount}


\newcommand{\catsch}[1]{(\mathrm{Sch}/#1)}

\newcommand{\qc}{quasi-coherent\xspace}

\newcommand{\df}{Deligne--Faltings\xspace}
\newcommand{\pdf}{pre-Deligne--Faltings\xspace}

\newcommand{\sm}{symmetric monoidal\xspace}

\newcommand{\et}{_{\text{\'et}}}

\newcommand{\gp}{^{\mathrm{gp}}}

\newcommand{\gm}{\GG_{\rmm}}

\newcommand{\catset}{(\mathrm{Set})}
\newcommand{\catmon}{(\mathrm{ComMon})}
\newcommand{\catsymmon}{(\mathrm{SymMonCat})}
\newcommand{\one}{\mathbf{1}}

\renewcommand{\pic}{\operatorname{\mathfrak{Pic}}}
\renewcommand{\div}{\operatorname{\mathfrak{Div}}}
\newcommand{\qcoh}{\operatorname{\mathfrak{QCoh}}}

\newcommand\radice[2]{\sqrt[\uproot{2}#1]{#2}}

\newcommand{\sh}{^{\mathrm{sh}}}
\newcommand{\sep}{^{\mathrm{sep}}}
\newcommand{\pre}{^{\mathrm{pre}}}
\newcommand{\wt}{^{\mathrm{wt}}}
\newcommand{\dwt}{^{\mathrm{dwt}}}

\newcommand{\ass}{^{\mathrm{a}}}
\newcommand{\pass}{^{\mathrm{pa}}}

\newcommand{\sz}[1]{\spec\ZZ[#1]}
\newcommand{\szp}[1]{[\spec\ZZ[#1]/\widehat{#1}]}


\DeclareMathOperator{\obj}{Obj}

\DeclareMathOperator{\cl}{cl}
\DeclareMathOperator{\Hom}{Hom}
\DeclareMathOperator{\bHom}{\mathbf{Hom}}

\begin{document}

\bibliographystyle{amsalpha}

\title{Parabolic sheaves on logarithmic schemes}

\author[Borne]{Niels Borne}

\author[Vistoli]{Angelo Vistoli}

\address[Borne]{Laboratoire Paul Painlevé\\
Université de Lille\\
U.M.R. CNRS 8524\\
U.F.R. de Mathématiques\\
59\,655 Villeneuve d'Ascq Cédex\\
France}
\email{Niels.Borne@math.univ-lille1.fr}

\address[Vistoli]{Scuola Normale Superiore\\Piazza dei Cavalieri 7\\
56126 Pisa\\ Italy}
\email{angelo.vistoli@sns.it}

\date{\today}

\maketitle

\begin{abstract}
We show how the natural context for the definition of parabolic sheaves on a scheme is that of logarithmic geometry. The key point is a reformulation of the concept of logarithmic structure in the language of symmetric monoidal categories, which might be of independent interest. Our main result states that parabolic sheaves can be interpreted as quasi-coherent sheaves on certain stacks of roots.
\end{abstract}

\section{Introduction}

The notion of \emph{parabolic bundle} on a curve was introduced by Mehta and Seshadri (see \cite{mehta-seshadri} and \cite{seshadri}), and subsequently generalized to higher dimension by Maruyama and Yokogawa (\cite{maruyama-yokogawa}); this latter definition was later improved by Mochizuki (\cite{mochizuki}), Iyer--Simpson (\cite{iyer-simpson}) and the first author (\cite{borne-para2}). Another important insight is due to Biswas, who connected rational parabolic bundles with bundles on orbifolds (\cite{biswas}). The first author refined Biswas' idea in \cite{borne-para1} and \cite{borne-para2}; in the latter paper he proved that, given $n$ smooth effective divisors $D_{1}$, \dots,~$D_{n}$ intersecting transversally on a normal variety $X$, there is an equivalence between the category of rational parabolic bundles and the limit of the category of vector bundles on the fibered product of $d\th$ root stacks of the $(X, D_{i})$, as $d$ becomes very divisible.

This left two main questions open.

\begin{enumerate1}

\item What about divisors that are not simple normal crossing? If, for example, a normal crossing divisor has singular components, it seems clear that one should use sheaves of weights.

\item What is the correct definition of parabolic coherent sheaf? \cite{maruyama-yokogawa} and \cite{iyer-simpson} contain definitions of torsion-free parabolic coherent sheaves; but the definition of a general coherent sheaf has to be essentially different.

\end{enumerate1}

The key point to solving these problems is the introduction of logarithmic structures. The main purpose of this paper is to give a definition of parabolic quasi-coherent sheaf with fixed rational weights on a logarithmic scheme, and to show the equivalence of category of such sheaves with the category of sheaves on a root stack.

More precisely, suppose that $\rho\colon M\rightarrow \cO_{X}$ is a logarithmic structure on a scheme $X$; denote, as usual, by $\overline{M}$ the quotient sheaf $M/\cO^{*}_{X}$. The denominators are taken in an appropriate sheaf of monoids $B$ containing $\overline{M}$; then we define a category of quasi-coherent parabolic sheaves on a fine logarithmic scheme $(X, M, \rho)$ with weights in $B$.

Also, we define a root stack $X_{B/\overline{M}}$; this is a tame Artin stack over $X$. The idea of the construction is essentially due to Martin Olsson, who defined it in several particular cases, from whom it was easy to extract the general definition ((\cite{MO}, \cite{olsson-log-twisted})). If the logarithmic structure is generated by a single effective Cartier divisor $D \subseteq X$, so that $\overline{M}$ is the constant sheaf $\NN_{D}$ on $D$, and we take $B$ to be $\frac{1}{d}\NN_{D}$, then $X_{B/\overline{M}}$ is the root stack $\radice{d}{(X,D)}$ (see \cite{dan-tom-angelo2008} or \cite{cadman}). Our main result (Theorem~\ref{thm:main}) is that the category of \qc parabolic sheaves on a fine logarithmic scheme $(X, M, \rho)$ with weights in $B$ is equivalent to the category of \qc sheaves on the stack $X_{B/\overline{M}}$. This represents a vast generalization of the correspondence of \cite{borne-para2}.

In order to do this we need to interpret the logarithmic structure $(M, \rho)$ as a \sm functor $\overline{M} \arr \div_{X}$, where $\div_{X\et}$ is the \sm stack over the small étale site $X\et$ of $X$ whose objects are invertible sheaves with sections. We call this a \emph{\df structure}. The fact that a \df structure defines a logarithmic structure is somehow implicit in the original construction of the logarithmic structure associated with a homomorphism of monoids $P \arr \cO(X)$, as in \cite{Kato}; going in the other direction, the construction is contained in Lorenzon's paper \cite{lorenzon}.

We find that this point of view has some advantages, and in this paper we make an effort to develop the theory of \df structures systematically, without referring to known results on logarithmic structures. We are particularly fond of our treatment of charts, in \ref{subsec:charts}, which we find somewhat more transparent than the classical one. The resulting notion of fine logarithmic structure is equivalent to the classical one.

There is much left to do in the direction that we point out. Suppose that $(M, \rho)$ is a saturated logarithmic structure. Then we can associate with it a tower of stacks $X_{d} \eqdef X_{\frac{1}{d}\overline{M}/\overline{M}}$, letting $d$ range over all positive integer. This tower seems to control much of the geometry of the logarithmic scheme $(X, M, \rho)$; for example, the limit of the small étale sites of the $X_{d}$ (appropriately restricted when not in characteristic~$0$) is the Kummer-étale site of $(X, M, \rho)$, and one can use the $X_{d}$ to investigate many questions concerning this site; for example, the K-theory of $(X, M, \rho)$, as defined by Hagihara and Nizio{\l} (see \cite{hagihara} and \cite{niziol-K-log}) is naturally interpreted in this language. In subsequent papers we plan to prove Nori's theorem for logarithmic schemes, in the style of \cite{borne-para2}, define real parabolic sheaves, and connections on them (as was pointed out to us by Arthur Ogus, this is important to study the Riemann--Hilbert correspondence for logarithmic schemes, as in \cite{ogus-logRH}), and in general apply this construction to other foundational questions in the theory of logarithmic schemes.

\subsection*{Description of contents} Section~\ref{sec:preliminary} contains several preliminary notions, mostly known, concerning monoids, sheaves of monoids, \sm categories and fibered symmetric monoidal categories. We also define one of our basic notions, that of \emph{\df object}: a \sm functor from a monoid to a \sm category with trivial kernel.

The main definition, that of \df structure (Definition~\ref{def:df-structure}), is contained in Section~\ref{sec:df-structures}. Here we also show the equivalence of the notion of \df structure with that of quasi-integral logarithmic structure (Theorem~\ref{thm:df<->log}). In (\ref{subsec:direct-inverse}) we define direct and inverse images of a \df structure, without going through the associated logarithmic structure.

Our treatment of charts is contained in (\ref{subsec:charts}); we compare it with Kato's treatment in (\ref{subsec:kato-charts}). Some of the results are strictly related with those in \cite[Section~2]{olsson-ens}; we do not refer to this paper, but prefer to reconstruct the theory independently. Proposition~\ref{prop:coherent<->coherent} implies that our resulting notion of fine structure coincides with the classical one.

Section~\ref{sec:sys-roots} contains the notion of systems of denominators and the definition of root stacks. In Section~\ref{sec:parabolic} we define parabolic sheaves and prove their basic properties. Finally, our main result, giving an equivalence between parabolic sheaves and sheaves on a root stack is in Section~\ref{sec:main-theorem}.

\subsection*{Acknowledgments} We would like to thank Luc Illusie, Arthur Ogus, and Martin Olsson for useful conversations.

\section{Definitions and preliminary results}\label{sec:preliminary}

\subsection{Conventions} The class of objects of a category $\cC$ will be denoted by $\obj\cC$. If $F$ is a presheaf (of sets, monoids, groups, \dots) on a site, we denote by $F\sh$ the associated sheaf.

If $F$, $F'\colon \cC \arr \cD$ and $G\colon \cD \arr \cE$ are functors and $\alpha\colon F \arr F'$ is a natural transformation, we denote by $G \circ\alpha$, or simply $G\alpha$, the natural transformation $GF \arr GF'$ defined by the obvious rule $(G\alpha)_{C} = G(\alpha_{C})$. Analogously, if $F\colon \cC \arr \cD$, $G$, $G'\colon \cD \arr \cE$ are functors and $\alpha\colon G \arr G'$ is a natural transformation, we denote by $\alpha\circ F$ or $\alpha F$ the natural transformation $GF \arr G'F$ defined by $(\alpha F)_{C} = \alpha_{F(C)}$.

\subsection{Monoids} All monoids considered will be commutative; we will use additive notation. We denote by $\catmon$ the category of (commutative) monoids.

If $A$ is a monoid, we denote the associated group by $A\gp$, and by $\iota_{A}\colon A \arr A\gp$ the canonical homomorphism of monoids. Any element of $A\gp$ is of the form $\iota_{A}a - \iota_{A}b$ for some $a$, $b \in A$; furthermore, two elements $a$ and $b$ of $A$ have the same image in $A\gp$ if and only if there exists $c \in A$ such that $a+ c = b + c$. A homomorphism of monoids $f\colon A \arr B$ induces a group homomorphism $f\gp \colon A\gp \arr B\gp$. 

A monoid is \emph{integral} if the cancellation law holds; equivalently, a monoid is integral if $\iota_{A}\colon A \arr A\gp$ is injective. A monoid $A$ is \emph{torsion-free} if it is integral and $A\gp$ is torsion-free. If $f\colon A \arr B$ is an injective homomorphism of monoids and $B$ is integral, then $f\gp \colon A\gp \arr B\gp$ is also injective.

A monoid is \emph{sharp} if the only invertible element is the identity. Notice that a sharp monoid has no non-zero element of finite order; however, the associated group $A\gp$ is not necessarily torsion-free.

The \emph{kernel} of a homomorphism of monoids $f\colon A \arr B$ is $f^{-1}(0) \subseteq A$. In contrast with the case of groups, the kernel of $f$ may be trivial without $f$ being injective (for example, look at the homomorphism $\NN^{2} \arr \NN$ defined by $(x,y) \arrto x+y$). An arbitrary submonoid $S \subseteq A$ is not necessarily a kernel. The following condition is necessary and sufficient for $S$ to be a kernel: if $a \in A$, $s \in S$ and $a+s \in S$, then $a \in S$.

If $S$ is a submonoid of $A$, we denote by $A/S$ the cokernel of the inclusion $S \subseteq A$. This is the quotient of $A$ by the equivalence relation $\sim_{S}$ defined by $a \sim b$ when there exist $s \in S$ and $t \in S$ such that $a + s = b + t$. The kernel of the projection $A \arr A/S$ is the set of $a \in A$ such that there exists $s \in S$ with $a + s\in S$. This is the smallest kernel that contains $S$, and we call it the \emph{kernel closure} of $S$.

A homomorphism of monoids $A \arr B$ is called a \emph{cokernel} if it is the cokernel of a homomorphism $C \subseteq A$. Any cokernel is surjective, but not every surjective homomorphism is a cokernel. A necessary and sufficient condition for $f\colon A \arr B$ to be a cokernel is that if $K$ is the kernel of $f$, the induced homomorphism $A/K \arr B$ is an isomorphism.

\subsection{Sheaves of monoids} Many of the notions above extend to sheaves and presheaves of monoids on a site $\cC$. If $A$ is such a sheaf, we define $A\gp$ as the sheafification of the presheaf sending $U \in \obj\cC$ into $A(U)\gp$. The obvious homomorphism of sheaves of monoids $\iota_{A}\colon A \arr A\gp$ is universal among homomorphism of sheaves of monoids from $A$ to a sheaf of groups.

A presheaf of monoids is called \emph{integral} if each $A(U)$ is integral. If $A$ is integral, so is the associated sheaf $A\sh$. It is \emph{sharp} if each $A(U)$ is sharp. 

If $K$ is a sub-presheaf of monoids of a presheaf of monoids $A$, we can define the presheaf quotient $A/K$ by the rule $(A/K)(U) = A(U)/K(U)$. It is the cokernel in the category of presheaves of monoids of the inclusion $K \subseteq A$. In general, if $C \arr A$ is a homomorphism of presheaves, its cokernel is $A/K$, where $K$ is the image presheaf in $A$.

If we substitute presheaves with sheaves, the quotient $A/K$ is the sheafification of the presheaf quotient; all cokernels in the category of sheaves of monoids are of this type.

\subsection{Symmetric monoidal categories} 

Our treatment of logarithmic structures is centered around the notion of \emph{\sm category}. We freely use the notation and the results of \cite[ch. VII and XI]{McLan}, which will be our main reference. This concept was introduced in \cite[II, Definition 2.1]{DMOS}, under the name \emph{tensor category}.

Let $\cM$ a \sm category. We denote the operation (the ``tensor product'') by $\otimes\colon \cM \times \cM \arr \cM$, its action on objects and arrows by $(x, y) \arrto x \otimes y$, the neutral element of $\cM$ by $\one$, the associativity isomorphisms $x \otimes (y \otimes z) \simeq (x \otimes y) \otimes z$ by $\alpha$, or $\alpha_{x,y,z}$, the isomorphism $\one \otimes x \simeq x$ by $\lambda$ or $\lambda_{x}$, the isomorphism $x \otimes y \simeq y \otimes x$ by $\sigma$, or $\sigma_{x,y}$. Occasionally we will use the subscript ${}_{\cM}$ (as in $\otimes_{\cM}$, $\one_{\cM}$, and so on) to distinguish among such objects relative to different \sm categories.

If $\cM$ and $\cN$ are \sm categories, a \emph{\sm functor} $F\colon \cM \arr \cN$ will be is a strong braided monoidal functor $\cM \arr \cN$ (\cite[ch.~IX, \S~2]{McLan}). All natural transformations between \sm functors will be assumed to be monoidal.

We denote by $\catsymmon$ the 2-category of symmetric monoidal categories. The objects are small symmetric monoidal categories, the 1-arrows are symmetric monoids functors, and the 2-arrows are monoidal natural transformations.

If $F\colon \cM \arr \cN$ is a \sm functor, which is an equivalence, when viewed as a functor of plain categories, then any quasi-inverse $G\colon \cN \arr \cM$ has a unique structure of a \sm functor, such that the given isomorphisms $FG \simeq \id_{\cN}$ and $GF \simeq \id_{\cM}$ are monoidal isomorphisms.

Any monoid $A$ will be considered as a discrete \sm category: the arrows are all identities, while the tensor product is the operation in $A$.

For the convenience of the reader, we make the notion of a \sm functor explicit in the case that we will use the most.

\begin{definition}\label{def:symmetric-monoidal-functor}
Let $A$ be a monoid, $\cM$ a \sm category. A \emph{\sm functor} $L\colon A \arr \cM$ consists of the following data.

\begin{enumeratea}

\item A function $L\colon A \arr \obj\cM$.

\item An isomorphism $\epsilon^{L}\colon \one \simeq L(0)$ in $\cM$.

\item For each $a$ and $b\in A$, an isomorphism $\mu^{L}_{a,b}\colon  L(a) \otimes L(b)\simeq L(a+b)$ in $\cM$.

\end{enumeratea}

We require that for any $a$, $b$, $c \in A$, the diagrams
   \[
   \xymatrix@C = 30pt{
   L(a)\otimes \bigl(L(b)\otimes L(c)\bigr)
      \ar[r]^-{\id \otimes \mu^{L}}\ar[dd]^{\alpha}
   &L(a)\otimes L(b+c)\ar[rd]^{\mu^{L}}& \\
   && L(a+b+c)\,,\\
   \bigl(L(a)\otimes L(b)\bigr)\otimes L(c)
      \ar[r]^-{\mu^{L} \otimes \id}
   &L(a+b) \otimes L(c)\ar[ru]^{\mu^{L}}&
   }
   \]
   
   \[
   \xymatrix{
   L(a+b) \ar@{=}[d] \ar[r]^-{\mu^{L}}
   & L(a) \otimes L(b) \ar[d]^{\sigma}\\
   L(b+a)\ar[r]^-{\mu^{L}}&
   L(b)\otimes L(a)
   }
   \]
and
   \[
   \xymatrix{
   L(a)\ar[d]^{=}
   &  \one\otimes L(a)\ar[l]_{\lambda} \ar[d]^{\epsilon^{L}\otimes \id}\\
   L(0+a) & L(0)\otimes L(a)\ar[l]_-{\mu^{L}}
   }
   \]
be commutative.

If $L\colon A \arr \cM$ and $M\colon A \arr \cM$ are \sm functors, a \emph{morphism} $\Phi\colon L \arr M$ is collection of natural transformations $\Phi_{a}\colon L(a) \arr M(a)$, one for each $a \in A$, such that for any $a$, $b \in A$, the diagram
   \[
   \xymatrix{
   L(a)\otimes L(b) \ar[r]^-{\mu^{L}}\ar[d]^{\Phi_{a} \otimes \Phi_{b}}
   & L(a+b) \ar[d]^{\Phi_{a+b}}\\
   M(a)\otimes M(b) \ar[r]^-{\mu^{M}}
   & M(a+b)
   }
   \]
commutes.
\end{definition}

If $f\colon A \arr B$ is a homomorphism of monoids, and $L\colon B\arr \cM$ is a \sm functor, the composite $L \circ f\colon A \arr \obj\cM$ has an obvious structure of a \sm functor. We will use this fact, together with some evident properties, without comments.

\begin{definition}
Let $\cM$ be a \sm category $\cM$. An \emph{$\cM$-valued \df object} is a pair $(A,L)$, where $A$ is a monoid and $L\colon A \arr \cM$ is a \sm functor $L\colon A \arr \cM$.
\end{definition}

There is a category of $\cM$-valued \df objects. An arrow from $(A, L)$ to $(B, M)$ (which we call a \emph{morphism of \df objects}) is a pair $(\phi, \Phi)$, where $\phi\colon A \arr B$ is a homomorphism of monoids and $\Phi\colon L \arr M \circ \phi$ is a monoidal natural transformation. The composition is defined in the obvious way.

\begin{definition}
Let $\cM$ be a \sm category, $L\colon A \arr \cM$ be a \df object. The \emph{kernel} $\ker L$ is the set of elements $a \in A$ such that $L(a)$ is isomorphic to the neutral element $\one$.
\end{definition}

One checks immediately that $\ker L$ is a sub-monoid of $A$.

\begin{proposition}
\label{prop:factor-predf}
Let $L\colon A \arr \cM$ be a \df object. Assume that the monoid of endomorphisms of the neutral element $\one$ of $\cM$ is trivial.

Then there exists a cokernel $\pi\colon A \arr \overline{A}$ and a \sm functor $\overline{L}\colon \overline{A} \arr \cM$ with trivial kernel, with an isomorphism of \sm functors $\overline{L} \circ \pi \simeq L$. Furthermore the \df object $(\overline{A}, \overline{L})$ is unique, up to a unique isomorphism, and is universal among morphism from $(A,L)$ to a \df object $(B,M)$ with trivial kernel.
\end{proposition}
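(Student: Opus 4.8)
The plan is to take $\overline{A}$ to be the monoid quotient $A/\ker L$ and to produce $\overline{L}$ by descending $L$ along the projection, using the triviality of $\End_{\cM}(\one)$ to rigidify the comparison isomorphisms. First I would check that $K \eqdef \ker L$ is a kernel in the monoid-theoretic sense, so that the projection $\pi\colon A \arr \overline{A} \eqdef A/K$ is a cokernel with $\ker\pi = K$. Indeed, if $a \in A$, $s \in K$ and $a+s \in K$, then $L(a) \simeq L(a)\otimes\one \simeq L(a)\otimes L(s) \simeq L(a+s) \simeq \one$, so $a \in K$; thus $K$ equals its own kernel closure, $\ker\pi = K$, and the induced map $A/K \arr A/K$ is the identity, which is exactly the criterion for $\pi$ to be a cokernel.

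The hypothesis that $\End_{\cM}(\one)$ is trivial gives $\aut_{\cM}(\one) = \{\id\}$, hence $\aut_{\cM}(x) = \{\id\}$ for every object $x \simeq \one$; in particular, for each $s \in K$ there is a \emph{unique} isomorphism $\tau_{s}\colon L(s) \simeq \one$. Using these trivializations together with $\mu^{L}$ and the unit constraints, for every pair $a \sim_{K} b$ (say $a+s = b+t$ with $s,t \in K$) I would define a comparison isomorphism $c_{a,b}\colon L(a) \simeq L(b)$ as the composite
\[
L(a) \simeq L(a)\otimes\one \xrightarrow{\id\otimes\tau_{s}^{-1}} L(a)\otimes L(s) \xrightarrow{\mu^{L}} L(a+s) = L(b+t) \xrightarrow{(\mu^{L})^{-1}} L(b)\otimes L(t) \xrightarrow{\id\otimes\tau_{t}} L(b)\otimes\one \simeq L(b).
\]
I expect \textbf{this to be the main obstacle}: one must show that $c_{a,b}$ is independent of the choice of $s,t$, that $c_{a,a} = \id$ and $c_{b,c}\circ c_{a,b} = c_{a,c}$, and that $c$ is compatible with the tensor structure, i.e.\ $\mu^{L}\circ(c_{a,a'}\otimes c_{b,b'}) = c_{a+b,\,a'+b'}\circ\mu^{L}$. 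Each such identity should be proved by exhibiting both sides as trivializations of one common object assembled from kernel elements and then invoking the uniqueness furnished by $\aut(\one) = \{\id\}$.

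Granting this, I would choose a set-theoretic section $s\colon \overline{A} \arr A$ of $\pi$ with $s(0)=0$, set $\overline{L}(\bar a) \eqdef L(s\bar a)$, take $\epsilon^{\overline L}$ from $\epsilon^{L}$, and define $\mu^{\overline L}_{\bar a,\bar b}$ as $\mu^{L}_{s\bar a,\,s\bar b}$ followed by $c_{s\bar a + s\bar b,\; s(\bar a+\bar b)}$. The coherence diagrams of Definition~\ref{def:symmetric-monoidal-functor} for $\overline{L}$ then reduce to those for $L$ together with the cocycle and tensor-compatibility of $c$. The required isomorphism $\theta\colon \overline{L}\circ\pi \simeq L$ is given on each $a$ by $c_{s\pi(a),\,a}\colon \overline{L}(\pi a) = L(s\pi a) \simeq L(a)$, and it is monoidal by the same properties of $c$. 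Finally $\ker\overline{L} = \{0\}$, because $\ker\pi = K$ forces the representative $s\bar a$ to lie outside $K$ whenever $\bar a \neq 0$, so that $\overline{L}(\bar a) = L(s\bar a) \not\simeq \one$.

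For the universal property, let $(\phi,\Phi)\colon (A,L) \arr (B,M)$ be a morphism into a \df object with $\ker M = \{0\}$. For $a \in K$ the component $\Phi_{a}$ identifies $L(a) \simeq \one$ with $M(\phi a)$, whence $M(\phi a) \simeq \one$ and $\phi a \in \ker M = \{0\}$; thus $\phi(K) = 0$, and the cokernel property of $\pi$ yields a unique factorization $\phi = \overline{\phi}\circ\pi$. I would then transport $\Phi$ through $\theta$ and the comparison isomorphisms to obtain a monoidal $\overline{\Phi}\colon \overline{L} \arr M\circ\overline{\phi}$ whose whiskering by $\pi$ recovers $\Phi$; triviality of $\End(\one)$ once more guarantees that $\overline{\Phi}$, and hence the factorization $(\overline{\phi},\overline{\Phi})$, is the only one compatible with $(\pi,\theta)$. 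Uniqueness of $(\overline{A},\overline{L})$ up to a unique isomorphism is then the formal consequence of this universal property.
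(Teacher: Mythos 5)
Your proposal is essentially the paper's own proof: the paper also sets $\overline{A} = A/K$ with $K = \ker L$, uses triviality of $\aut(\one)$ to obtain a unique trivialization $\one \simeq L(k)$ for each $k \in K$ and hence canonical comparison isomorphisms $\tau_{a,b}\colon L(a) \simeq L(b)$ whenever $a \sim_{K} b$, and your set-theoretic section of $\pi$ is just an explicit form of the paper's quasi-inverse to the monoidal equivalence $A^{K} \arr A/K$ (where $A^{K}$ has the elements of $A$ as objects and exactly one arrow between $K$-equivalent ones). Even the part you flag as the main obstacle --- independence of $c_{a,b}$ from the choice of $s,t$, the cocycle identity, and compatibility with the tensor structure --- is treated at the same level of detail in the paper, which records the relation $\nu_{l+k,a} = \nu_{l,a+k}\circ\nu_{k,a}$ and leaves the remaining verifications as ``easy to check,'' and your universal-property argument ($\phi(K) \subseteq \ker M = 0$, then factor through the cokernel, with uniqueness forced by triviality of $\End(\one)$) likewise matches the paper's.
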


\begin{proof}
It is clear that if $(\overline{A}$, $\overline{L})$ and the isomorphism exist, the kernel of $\pi$ must be the kernel $K$ of $L$; therefore there is a unique isomorphism of $\overline{A}$ with $A/K$. So it is enough to show that there exists a unique factorization
   \[
   A \stackrel{\pi}{\larr} A/K \stackrel{\overline{L}}{\larr} \cM,
   \]
up to a unique isomorphism, and that it has the required universal property. 

Notice that if $k \in K$, the isomorphism $\one \simeq L(k)$, which exists by hypothesis, must be unique, because $\aut(\one)$ is trivial. Hence for any $a$ in $A$ we get canonical isomorphisms
   \[
   L(a) \stackrel{\lambda}\simeq \one \otimes L(a)
   \simeq L(k)\otimes L(a) \stackrel{\mu^{L}}\simeq L(k+a).
   \]
Denote by $\nu_{k,a}$ the resulting canonical isomorphism $L(a) \simeq L(k+a)$.  This isomorphism is easily shown to have the property that
   \[
   \nu_{l+k,a}=\nu_{l,a+k} \circ \nu_{k,a}\colon L(a) \simeq L(l+k+a)
   \]
for any $k$, $l \in K$.

Now, suppose that $a$ and $b \in A$ have the same image in $A/K$. There exist $k$, $l\in K$ such that $k+a = l+b$; so we have an isomorphism $\tau_{a,b}\colon L(a) \simeq L(b)$ defined as the composite
   \[
   L(a) \stackrel{\nu_{k,a}}\simeq L(k+a) = L(l+b)
   \stackrel{\nu_{l,b}^{-1}}\simeq L(b).
   \]
It is easy to check that $\tau_{a,b}$ is independent of the choice of $k$ and $l$. If $a$, $b$ and $c$ have the same image in $A/K$, then we can find $k$, $l$ and $m \in K$ such that $k + a = l + b = m + c$, and then
   \[
   \tau_{a,c} = \tau_{b,c} \circ \tau_{a,b}\colon L(a) \simeq L(b).
   \]

Now consider the category $A^{K}$, whose object are the elements of $A$, and in which, given two elements $a$, $b \in A$, there exists exactly one arrow $a \arr b$ if $a$ and $b$ have the same image in $A/K$, and none otherwise. The category $A^{K}$ is a strict \sm category, with the tensor product given by the operation in $A$. The projection $\pi\colon A \arr A/K$ factors through $A^{K}$, and the projection $A^{K} \arr A/K$ is an equivalence of monoidal categories. Hence it is enough to produce a factorization 
   \[
   A \arr A^{K} \stackrel{\widehat{L}}{\larr} \cM,
   \]
and then the desired functor $\overline{L}\colon A/K \arr \cM$ will be obtained by composing $\widehat{L}$ with a quasi-inverse $A/K \arr A^{K}$. This factorization is obtained by defining $\widehat{L}$ to be the same function as $L$ on the elements of $A$; if $a \arr b$ is an arrow in $A^{K}$, then we take as its image in $\cM$ the isomorphism $\tau_{a,b}\colon L(a) \simeq L(b)$. We leave it to the reader to check that this functor is monoidal, and gives the desired factorization.

We have left to check that $(\overline{A}, \overline{L})$ has the desired universal property. Suppose that $(\phi,\Phi)\colon (A, L) \arr (B,M)$ is a morphism of \df objects. Let $K'$ be the kernel of $M$: clearly $\phi$ sends $K$ to $K'$, thus there is a natural commutative diagram:

 \[
   \xymatrix{
   (A,L) \ar[d] \ar[r]   &  (A^{K},\widehat{L})\ar[d] \\
    (B,M)  \ar[r]   &  (B^{K'},\widehat{M})  }
   \]
If the kernel $K'$ of $(B,M)$ is trivial, the bottom morphism is an isomorphism, and this shows existence in the universal property. We leave it to the reader to check uniqueness.
\end{proof}

The following two examples play a key role in this paper.

\begin{examples} Let $X$ be a scheme.

\begin{enumeratea}

\item We denote by $\div X$ the groupoid of line bundles with sections. We consider $\div X$ as a category of ``generalized effective Cartier divisors'': effective Cartier divisors on $X$ form a monoid, which is, however, not functorial in $X$, since one can't pull back Cartier divisors along arbitrary maps. Line bundles with sections don't have this problem; there is a price to pay, however, which is to have to deal with a \sm category instead of a monoid.

The objects of $\div X$ are pairs $(L, s)$, where $L$ is an invertible sheaf on $X$ and $s \in L(X)$. An arrow from $(L, s)$ to $(L', s')$ is an isomorphism of $\cO_{X}$-modules from $L$ to $L'$ carrying $s$ into $s'$. The category $\div X$ also has a \sm structure given by tensor product, defined as $(L, s) \otimes (L', s') \eqdef (L \otimes L', s \otimes s')$. The neutral element is $(\cO_{X},1)$.

Notice that $\div X$ has the property that the monoid of endomorphisms of the neutral element $(\cO_{X},1)$ is trivial.

\item We denote by $\pic X$ the category of invertible sheaves on $X$, with the monoidal structure given by tensor product. We notice that, in contrast with standard usage, and with the example above, the arrows in $\pic X$ will be arbitrary homomorphisms of $\cO_{X}$-modules, and not only isomorphisms. Thus, $\pic X$ is not a groupoid. Tensor product makes $\pic X$ into a \sm category, with neutral element $\cO_{X}$. 

\item The category of invertible sheaves on $X$, in which the only arrows are the isomorphisms, will be denoted by $\cB\gm(X)$.

\end{enumeratea}

\end{examples}

There is a natural strict \sm functor $\div X \arr \pic X$ sending $(L, s)$ to $L$.

If $A$ is a monoid and $\cM$ is a \sm category, we denote by $\hom(A, \cM)$ the category of \sm functors $A \arr \cM$. Given a homomorphism of monoids $f\colon A \arr B$, there is an induced functor $f^{*}\colon \hom(B, \cM) \arr \hom(A, \cM)$ sending $L\colon B \arr \cM$ into the composite $L \circ f\colon A \arr \cM$.

\subsection{Monoidal fibered categories}

Here we will freely use the language of fibered categories, for which we refer to \cite[Chapter~3]{fga-explained}. 

\begin{definition}
Let $\cC$ be a category. A \emph{\sm fibered category} $\cM \arr \cC$ is a fibered category, together with a cartesian functor
   \[
   \otimes = \otimes_{\cM}\colon \cM \times_{\cC} \cM \arr \cM,
   \]
a section $\one_{\cM}\colon \cC \arr \cM$, and base-preserving natural isomorphisms
   \[
   \alpha\colon 
      \otimes\circ(\id_{\cM} \times \otimes) \simeq 
\otimes\circ(\otimes \times \id_{\cM})
   \quad\text{of functors}\quad
   \cM\times_{\cC}\cM \times_{\cC} \cM \arr \cM,
   \]
   \[
   \lambda\colon 
   \otimes\circ(\one_{\cM} \times \id_{\cM}) \simeq \id_{\cM}
   \quad\text{of functors}\quad
   \cM \simeq \cC\times_{\cC}\cM \arr \cM,
   \]
and
    \[
   \sigma\colon 
   \otimes \simeq \otimes\circ \Sigma_{\cM}
   \quad\text{of functors}\quad
   \cM\times_{\cC}\cM \arr \cM,
   \]
where by $\Sigma_{\cM}\colon \cM\times_{\cC}\cM \arr \cM\times_{\cC}\cM$ we mean the functor exchanging the two terms, such that for any object $U$ of $\cC$ the restrictions of $\otimes$ and of the natural transformations above yield a structure of \sm category on $\cM(U)$.

If $\cM \arr \cC$ and $\cN \arr \cC$ are \sm fibered categories, a \sm functor $F\colon \cM \arr \cN$ is a cartesian functor, together with an isomorphism
   \[
   \mu^{L}\colon 
   \otimes_{\cN} \circ (F \times F) \simeq F \circ \otimes_{\cM}
   \quad\text{of functors}\quad
   \cM \times_{\cC} \cM \arr \cN
   \]
and
   \[
   \epsilon^{F}\colon 
    \one_{\cN}\simeq F\circ \one_{\cM}    
    \quad\text{of functors}\quad
   \cC \arr \cN,
   \]
such that the restrictions of these data to each $\cM(U)$ and $\cN(U)$ gives $F_{U}\colon \cM(U) \arr \cN(U)$ the structure of a \sm functor.

Morphisms of \sm functors are base-preserving natural transformation, whose restriction to each fiber is monoidal.
\end{definition}

If $\cM \arr \cC$ is a \sm fibered category and we choose a cleavage for it, we obtain a pseudo-functor (i.e., a lax 2-functor) from $\cC\op$ the 2-category $\catsymmon$ of \sm categories. A different choice of a cleavage yields a canonically isomorphic pseudo-functor. 

Conversely, given a pseudo-functor $\cC\op \arr \catsymmon$, the usual construction yields a \sm fibered category over $\cC$ with a cleavage.

In particular, if $A\colon \cC\op \arr \catmon$ is a presheaf of monoids on a category $\cC$, we consider the associated fibered category $(\cC/A) \arr \cC$. The objects of $(\cC/A)$ are pairs $(U, a)$, where $U$ is an object of $\cC$ and $a \in A(U \to X)$. The arrows from $(U, a)$ to $(V, b)$ are arrows $f: U \arr V$ such that $f^{*}b = a$. Because of the customary identification of categories fibered in sets on $\cC$ and functors $\cC\op \arr \catset$, we will usually write this simply as $A \arr \cC$. Such a category has a canonical structure of strict \sm fibered category.

\begin{definition}
Let $\cC$ be a site. A \emph{\sm stack over $\cC$} is a \sm fibered category over $\cC$ that is a stack.
\end{definition}

\begin{examples} Let $X$ be a scheme; denote by $\catsch{X}$ the category of schemes over $X$.
\begin{enumeratea}

\item The \sm stack $\div_{X} \arr \catsch X$ is the category associated with the pseudo-functor that sends each $U \arr X$ into the category $\div U$.

\item Analogously, one defines the \sm stack $\pic_{X}$ whose fiber over $U \arr X$ is $\pic U$.

\end{enumeratea}
\end{examples}

\begin{remark}
The stack $\div_{X}$ can be described using the language of algebraic stacks as the quotient $[\AA_{X}^{1}/\GG_{\rmm, X}]$.

The stack $\pic_{X}$ is not an algebraic stack, because it is not a stack in groupoids. The underlying stack in groupoids (obtained by deleting all the arrows that are not cartesian) is the usual Picard stack of $X$, and can be described as the classifying stack $\cB_{X}\gm$ of the group scheme $\GG_{\rmm, X}$, or, in other words, as the stack quotient $[X/\GG_{\rmm, X}]$ for the trivial action of $\gm$ on $X$.
\end{remark}

We will need the following extension result. Let $\cC$ be a site, and let $A\colon \cC\op \arr \catmon$ be a presheaf of monoids on $\cC$. Denote by $A\sh$ the associated sheaf of monoids, $s_{A}\colon A \arr A\sh$ the canonical homomorphism.

\begin{proposition}\label{prop:sheafification}
Let $\cM$ be a \sm  stack over $\cC$, $L\colon  A \arr \cM$ a \sm functor. Then there exists a \sm functor $L\sh\colon A\sh \arr \cM$ and an isomorphism of the composite $s_{A} \circ L\sh\colon  A \arr \cM$ with $L$. Furthermore, the pair $(A\sh, L\sh)$ has the following universal property: any morphism  of \df objects $(A, L) \arr (B,M)$, where $B$ is a sheaf, factors uniquely through $(A\sh, L\sh)$. If $L$ has trivial kernel, so has $L\sh$.
\end{proposition}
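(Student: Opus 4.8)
The plan is to construct $L\sh$ by \emph{descent}, using only two properties of the sheafification $s_A\colon A \arr A\sh$: every local section of $A\sh$ comes locally from $A$ (for each $\alpha \in A\sh(U)$ there is a covering $\{U_i \arr U\}$ and $a_i \in A(U_i)$ with $s_A(a_i) = \alpha\rest{U_i}$), and two sections of $A$ agree in $A\sh$ precisely when they agree locally (if $s_A(a) = s_A(b)$ in $A\sh(U)$, then $a\rest{V} = b\rest{V}$ for the members $V$ of some covering of $U$). Both follow from the construction of $A\sh$ via the plus construction. Throughout I fix a cleavage of $\cM$, so that restriction functors $\cM(U) \arr \cM(V)$ and their coherence isomorphisms are available, and I use that $\cM$ is a stack: objects with descent data glue, and isomorphisms between objects of $\cM$ form a sheaf.

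First I would define $L\sh$ on objects. Given $\alpha \in A\sh(U)$, choose $\{U_i \arr U\}$ and $a_i \in A(U_i)$ lifting $\alpha\rest{U_i}$. On a double overlap $U_{ij}$ we have $s_A(a_i)\rest{U_{ij}} = s_A(a_j)\rest{U_{ij}}$, so there is a covering $\{V \arr U_{ij}\}$ on which $a_i\rest V = a_j\rest V$ in $A$; over each such $V$ the cartesian structure of $L$ supplies a canonical isomorphism $L(a_j)\rest V \simeq L(a_j\rest V) = L(a_i\rest V) \simeq L(a_i)\rest V$. These agree on the overlaps of the $V$'s, so, $\cM$ being a stack, they glue to a single isomorphism $\theta_{ij}\colon L(a_j)\rest{U_{ij}} \simeq L(a_i)\rest{U_{ij}}$. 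The cocycle identity may be checked on a common refinement over which all the $a$'s coincide, where it reduces to the coherence of the cleavage; hence $\{L(a_i), \theta_{ij}\}$ is a descent datum, whose gluing is $L\sh(\alpha) \in \cM(U)$. A second application of the stack property, comparing two systems of choices over a common refinement, shows that $L\sh(\alpha)$ is independent of the choices up to a canonical isomorphism, and that $L\sh$ extends to a cartesian functor $A\sh \arr \cM$; on arrows one argues the same way.

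Next I would produce the monoidal data. For $\alpha, \beta \in A\sh(U)$ one may choose the lifting coverings compatibly, so that $\alpha\rest{U_i} = s_A(a_i)$ and $\beta\rest{U_i} = s_A(b_i)$ simultaneously; then $\mu^L_{a_i,b_i}$ glues, by the stack property, to $\mu^{L\sh}_{\alpha,\beta}\colon L\sh(\alpha)\otimes L\sh(\beta) \simeq L\sh(\alpha+\beta)$, and likewise $\epsilon^L$ glues to $\epsilon^{L\sh}\colon \one \simeq L\sh(0)$. The isomorphism $L\sh \circ s_A \simeq L$ is tautological, since for $\alpha = s_A(a)$ the one-element covering with $a_1 = a$ is an admissible choice. \textbf{The main obstacle} is the verification that $(L\sh, \mu^{L\sh}, \epsilon^{L\sh})$ satisfies the associativity, commutativity and unit axioms of Definition~\ref{def:symmetric-monoidal-functor}: each is an equality of isomorphisms in $\cM(U)$, hence an equality of sections of the sheaf of isomorphisms of $\cM$, and so may be tested locally; over a covering where $\alpha$, $\beta$, $\gamma$ lift to sections of $A$ it reduces to the corresponding axiom for $L$ together with the coherence of the cleavage. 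The bookkeeping is routine but must be done carefully so that the gluing isomorphisms stay compatible with $\alpha$, $\lambda$ and $\sigma$.

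Finally, the universal property and the kernel statement. Given $(\phi, \Phi)\colon (A, L) \arr (B, M)$ with $B$ a sheaf, the homomorphism $\phi$ factors uniquely as $\phi = \overline{\phi}\circ s_A$ with $\overline{\phi}\colon A\sh \arr B$, by the universal property of sheafification of presheaves of monoids. Since $M\circ\phi = (M\circ\overline{\phi})\circ s_A$, I define $\overline{\Phi}_\alpha$ locally: over a covering with $\alpha\rest{U_i} = s_A(a_i)$ set it equal to $\Phi_{a_i}$ transported through the structural isomorphisms $L\sh(\alpha)\rest{U_i}\simeq L(a_i)$ and $M(\overline{\phi}(\alpha))\rest{U_i}\simeq M(\phi(a_i))$; these agree on overlaps because $\Phi$ is compatible with restriction and equal sections of $A\sh$ are locally equal in $A$, so they glue to $\overline{\Phi}_\alpha$, giving the unique monoidal factorization (uniqueness holds because a morphism in $\cM$ is determined by its restrictions). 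For the kernel, suppose $L$ has trivial kernel and $\alpha \in A\sh(U)$ satisfies $L\sh(\alpha) \simeq \one$. Choosing $\{U_i \arr U\}$ with $\alpha\rest{U_i} = s_A(a_i)$ gives $L(a_i) \simeq L\sh(\alpha)\rest{U_i}\simeq \one$, so $a_i \in \ker L = \{0\}$ and $\alpha\rest{U_i} = s_A(0) = 0$; as $A\sh$ is a sheaf, $\alpha = 0$, so $\ker L\sh$ is trivial.
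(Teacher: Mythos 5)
Your proof is correct, but it follows a genuinely different route from the paper's. The paper never manipulates explicit local lifts and cocycles: for each covering $\{U_i \arr U\}$ it forms the categories of descent data $A(\{U_i\rightarrow U\})$ and $\cM(\{U_i\rightarrow U\})$, observes that the stack condition makes $\cM(U) \arr \cM(\{U_i\rightarrow U\})$ an equivalence, and passes to the filtered colimit over coverings to factor $L$ through $L\sep\colon A\sep \arr \cM$, where $A\sep(U) = \varinjlim_{\{U_i\rightarrow U\}} A(\{U_i\rightarrow U\})$ is one application of the plus construction; iterating once more reaches $A\sh$, and both the universal property and the kernel statement are then extracted in one line each from the functoriality of this construction. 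You instead build $L\sh$ in a single pass from the two abstract characterizing properties of sheafification (local liftability of sections of $A\sh$, and local detection of equality of sections of $A$), assembling $L\sh(\alpha)$ as the gluing of a descent datum $\{L(a_i), \theta_{ij}\}$ and verifying the cocycle, monoidal, and coherence axioms locally via the fact that Homs between objects of a stack form sheaves --- a reduction that is sound, since equalities of morphisms may indeed be tested on a covering. The paper's route buys cleaner bookkeeping: the cocycle conditions and coherence checks are absorbed into the canonical functor between descent-data categories and into composition of equivalences, so nothing like your $\theta_{ij}$ ever appears explicitly; the price is setting up an inductive limit of categories over the system of coverings and a two-step iteration through the separated presheaf. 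Your route buys independence from any particular construction of $A\sh$ and makes the kernel and universal-property arguments completely explicit (where the paper says ``the uniqueness is obvious,'' you actually glue $\overline{\Phi}_\alpha$ from the $\Phi_{a_i}$); the price is the coherence bookkeeping you yourself flag as routine but delicate, which in the paper's formulation simply never arises. Given that the paper explicitly presents its own argument as a sketch, your level of detail is comparable or greater, and I see no gap.
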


This could be considered as obvious, as it says that the sheafification of a presheaf coincides with its stackification. However, we don't know a reasonable reference, so we sketch a proof.

\begin{proof}
	Let $\{U_i\rightarrow U\}$ be a cover in $\cC$. There is a canonical isomorphism
   \[
   \xymatrix@C=45pt{
   A(U) \ar[d]	 \ar[r]^{L(U)} & \cM(U) \ar[d]\\
   A(\{U_i\rightarrow U\})       \ar[r]_{L(\{U_i\rightarrow U\})} \ar@{=>}
   [ur] & \cM (\{U_i\rightarrow U\})  }
   \]
where for a fibered category $\mathcal F\rightarrow \mathcal C$, $\mathcal F(\{U_i\rightarrow U\})$ denotes the category of descent data of $\mathcal F$ with respect to $\{U_i\rightarrow U\}$ (see \cite{fga-explained}). Since $\cM$ is a stack, the right-hand map is an equivalence. Since the diagram above is compatible with refinements of covers, we can take the inductive limit and get a factorization:
\[
\xymatrix{ 
    A(U) \ar[r]|*{}="A"^{L(U)} \ar[d]&  \cM(U)\\ 
    A\sep(U)   \ar[ru]_{L\sep(U)} \ar@{=>};"A" &}
\]
where $A\sep(U)\eqdef \varinjlim_{\{U_i\rightarrow U\}}   A(\{U_i\rightarrow U\})$. This is compatible with restriction, so that we obtain a factorization $L\sep:  A\sep\rightarrow \cM$ of $L:A\rightarrow \cM$, and iterating this process we get the wished factorization $L\sh:  A\sh\rightarrow \cM$ of $L:A\rightarrow \cM$.

This factorization is functorial in $A$, and the existence in the universal property follows, moreover the uniqueness is obvious.

For the last assertion, it is enough to notice that $L\sep$ has trivial kernel if $L$ has.
\end{proof}

If $X$ is a scheme, we denote by $X\et$ the small étale site of $X$, whose objects are étale morphisms $U \arr X$. When we mention a sheaf on $X$, we will always mean a sheaf on $X\et$. Thus, for example, by $\cO_{X}$ we mean the sheaf on $X\et$ sending each $U \to X$ into $\cO(U)$. The Zariski site of $X$ will be hardly used. We will often indicate an object $U \arr X$ of $X\et$ simply by $U$. The restriction of the stacks $\div_{X}$ and $\pic_{X}$ to $X\et$ defined above will be denoted by $\div_{X\et}$ and $\pic_{X\et}$.

\section{\df structures}\label{sec:df-structures}

\subsection{\df structures and logarithmic structures}

Now we reformulate the classical notion of a logarithmic structure on a scheme in a form that is more suitable to define parabolic sheaves.

\begin{definition}\label{def:df-structure}
Let $X$ be a scheme. A \emph{\pdf structure} $(A, L)$ on $X$ consists of the following data:

\begin{enumeratea}

\item a presheaf of monoids $A\colon X\et\op \arr \catmon$ on $X\et$, and

\item a \sm  functor $L\colon A \arr \div_{X\et}$.

\end{enumeratea}

A \emph{\df structure} on $X$ is a \pdf structure $(A, L)$ such that $A$ is a sheaf, and $L$ has trivial kernel.

A morphism of \pdf structures from $(A,L)$ to $(B,M)$ is a pair $(\phi, \Phi)$, where $\phi\colon A \arr B$ is a morphism of sheaves of monoids and $\Phi\colon L \arr M \circ \phi$ is a morphism of \sm cartesian functors. A morphism of \df structures is a morphism of \pdf structures between \df structures.
\end{definition}

Given a sheaf of monoids $A$ on $X\et$, we will sometimes say that a cartesian \sm functor $L\colon A \arr \div_{X}$ \emph{is a \df structure} to mean that $(A, L)$ is a \df structure, that is, that $L$ has a trivial kernel.

The composition of morphisms of \pdf structures on $X$ is defined in the obvious way. This defines the categories of \pdf structures and of \df structures on $X$.

Notice that, since $\div _{X}$ is fibered in groupoids, a morphism $(\phi, \Phi)\colon (A, L) \arr (B, M)$ of \pdf structures is an isomorphism if and only if $\phi\colon A \arr B$ is an isomorphism.

\begin{remark}\label{rmk:conventions-symmetric-monoidal}
To compute with \sm functors $L\colon A \arr \div_X$ the following convention is useful. If $a \in A(U)$, we denote the image of $a$ in $\div U$ by $L(a) = (L_{a}, \sigma^{L}_{a})$. Then $\sigma^{L}_{0} \in L_{0}$ is nowhere vanishing; the isomorphism $\epsilon^L:  (\cO_{U},1)\simeq L(0)$ is uniquely determined by the condition that it carries $\sigma^{L}_{0}$ into $1$.
\end{remark}

The embedding of the category of \df structures on $X$ into the category of \pdf structures has a left adjoint.

\begin{proposition}
	\label{prop:pdf-to-df}
Let $(A,L)$ be a \pdf structure on a scheme $X$. There exists a \df structure $(A\ass, L\ass)$, together with a homomorphism of \pdf structures $(A, L) \arr (A\ass, L\ass)$, that is universal among homomorphism from $(A, L)$ to \df structures.

Furthermore, if $K$ is the kernel of $L\colon A \arr \div_{X}$, then $A\ass$ is the sheafification of the presheaf quotient $A/K$.
\end{proposition}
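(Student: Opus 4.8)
The plan is to produce $(A\ass, L\ass)$ in two stages: first kill the kernel of $L$ by passing to a presheaf quotient, then sheafify. Concretely, I would set $A\ass = (A/K)\sh$ and $L\ass = \overline{L}\sh$, where $K = \ker L$, where $\overline{L}\colon A/K \arr \div_{X\et}$ is the factorization of $L$ through the presheaf quotient, and where $(-)\sh$ denotes the sheafification/stackification of Proposition~\ref{prop:sheafification}. The two universal properties — one for killing the kernel, one for sheafifying — will then compose to give the required universal property against \df structures, which carry both constraints (being a sheaf and having trivial kernel) simultaneously.

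The first stage is a globalization of Proposition~\ref{prop:factor-predf} from monoids to presheaves of monoids. For each object $U$ of $X\et$ the fiber $\div(U)$ has trivial endomorphism monoid of its neutral element $(\cO_{U}, 1)$, so Proposition~\ref{prop:factor-predf} applies to each $L(U)\colon A(U) \arr \div(U)$ and yields a cokernel $A(U) \arr A(U)/K(U)$ together with a \sm functor $\overline{L}(U)\colon A(U)/K(U) \arr \div(U)$ of trivial kernel, where $K(U) = \ker L(U)$. I would first check that $K$ is a sub-presheaf of monoids: for $f\colon U \arr V$ in $X\et$ the pullback $f^{*}$ carries the neutral element to the neutral element, so $L(V)(a) \simeq \one$ forces $L(U)(a|_{U}) \simeq \one$, i.e. $K(V)$ restricts into $K(U)$. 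Then the uniqueness clause of Proposition~\ref{prop:factor-predf} makes the factorizations $\overline{L}(U)$ compatible with the restriction (pullback) functors, so that they assemble into a \sm functor $\overline{L}\colon A/K \arr \div_{X\et}$ of trivial kernel with $\overline{L}\circ\pi \simeq L$. The same uniqueness, applied object by object, shows that $(A,L) \arr (A/K, \overline{L})$ is universal among morphisms of \pdf structures from $(A,L)$ to a \pdf structure whose \sm functor has trivial kernel.

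For the second stage I would apply Proposition~\ref{prop:sheafification} to the \pdf structure $(A/K, \overline{L})$: since $A/K$ is a presheaf of monoids and $\div_{X\et}$ is a \sm stack, there is a \sm functor $\overline{L}\sh\colon (A/K)\sh \arr \div_{X\et}$ restricting to $\overline{L}$, and because $\overline{L}$ has trivial kernel so does $\overline{L}\sh$. Thus $(A\ass, L\ass) \eqdef ((A/K)\sh, \overline{L}\sh)$ is genuinely a \df structure, and $A\ass$ is by construction the sheafification of $A/K$, as asserted.

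Finally, I would establish the universal property by composing the two stages. Let $(B,M)$ be a \df structure and $(A,L) \arr (B,M)$ a morphism of \pdf structures. Since $M$ has trivial kernel, the first universal property factors this morphism uniquely through $(A/K, \overline{L})$; since $B$ is a sheaf, the universal property of Proposition~\ref{prop:sheafification} then factors the resulting morphism $(A/K,\overline{L}) \arr (B,M)$ uniquely through $((A/K)\sh, \overline{L}\sh) = (A\ass, L\ass)$. Composing yields the unique factorization through $(A\ass, L\ass)$. The main obstacle is the first stage: Proposition~\ref{prop:factor-predf} is phrased for a single monoid, and the real work is checking that its canonical construction and universal property are natural enough in $U$ to descend to presheaves of monoids over the stack $\div_{X\et}$ — this is exactly where the uniqueness in Proposition~\ref{prop:factor-predf} is used to glue the object-wise factorizations into a single morphism of \pdf structures.
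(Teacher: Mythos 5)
Your proposal is correct and follows exactly the paper's own two-step argument: the paper likewise constructs $(A\pass, L\pass) = (A/K, \overline{L})$ by applying Proposition~\ref{prop:factor-predf} (whose hypothesis holds since $\aut(\cO_U,1)$ is trivial in each $\div(U)$), establishes its universal property among \pdf structures with trivial kernel, and then sheafifies via Proposition~\ref{prop:sheafification}. Your write-up merely makes explicit the gluing of the objectwise factorizations via the uniqueness clause, which the paper leaves implicit.
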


\begin{proof}
	Thanks to Proposition \ref{prop:factor-predf}, we construct a morphism $(A,L) \arr (A\pass, L\pass)$, such that $A\pass$ is the presheaf quotient $A/K$, and $L\pass$ has trivial kernel, and show that it is universal among morphisms from $(A,L)$ to \pdf structures $(B,M)$ such that the kernel of $M$ is trivial. Then we apply Proposition \ref{prop:sheafification} to sheafify. 
\end{proof}

Next we connect our notion of logarithmic structure with the classical one.

\begin{definition}[\cite{Kato}]
	A \emph{log structure} on a scheme $X$ is a pair $(M,\rho)$, where $M$ is a sheaf of monoids on $X\et$ and $\rho: M\rightarrow \mathcal O_X$ is a morphism, where $\mathcal O_X$ denotes the multiplicative monoid of the ring $\mathcal O_X$, with the property that the induced homomorphism $\rho^{-1} \mathcal O_X^*\rightarrow \mathcal O_X^*$ is an isomorphism.
\end{definition}

A morphism $(M, \rho) \arr (M', \rho')$ is a homomorphism of sheaves of monoids $f\colon M \arr M'$, such that $\rho' \circ f = \rho$. This defines the category of log structures.

This is much too large, and one imposes various conditions on the structure, the first of them usually being that $M$ is an integral sheaf of monoids. An even weaker condition is the following.

\begin{definition}[\cite{ogus-logbook}]
A log structure $(M, \rho)$ on a scheme $X$ is \emph{quasi-integral} if the action of $\rho^{-1}\cO^{*}_{X} \simeq \cO^{*}_{X}$ on $M$ is free.
\end{definition}

In other words, $(M,\rho)$ is quasi-integral if whenever $U \arr X$ is an étale morphism and $\alpha \in \rho^{-1}\cO^{*}_{X}(U)$ and $m \in M(U)$ are such that $\alpha + m = m$, then $\alpha = 0$.

\begin{theorem}\label{thm:df<->log}
The category of \df structures on $X$ is equivalent to the category of quasi-integral log structures on $X$.
\end{theorem}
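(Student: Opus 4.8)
The plan is to exhibit functors in both directions and to show they are quasi-inverse. Throughout I will write $\iota\colon \cO_X \arr \div_{X\et}$ for the strict \sm functor sending a section $f \in \cO(U)$ to the pair $(\cO_U, f)$, with $(\cO_U,f)\otimes(\cO_U,g) = (\cO_U, fg)$ and neutral element $(\cO_U,1)=\one$. Its kernel is exactly $\cO_X^*$, since $(\cO_U, f) \simeq \one$ in $\div U$ precisely when $f$ is invertible. This functor is the bridge between the two structures.

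First, to pass from log structures to \df structures, I would take a quasi-integral log structure $(M, \rho)$ and compose $\rho$ with $\iota$, regarding $M$ merely as a sheaf of monoids, to obtain a \pdf structure $(M, \iota\circ\rho)$ on $X$. Its kernel is $\rho^{-1}\cO_X^* = \cO_X^*$. Applying Proposition~\ref{prop:pdf-to-df} produces the associated \df structure $(M, \iota\rho)\ass = (\overline M, \overline L)$, whose underlying sheaf is the sheafification of $M/\cO_X^*=\overline M$ and whose functor $\overline L$ has trivial kernel. I set $\Phi(M,\rho) \eqdef (\overline M, \overline L)$; a morphism $(M,\rho)\arr(M',\rho')$ commutes with the $\iota\rho$'s, hence induces a morphism of the associated \df structures by the universal property of Proposition~\ref{prop:pdf-to-df}, making $\Phi$ a functor.

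Second, to pass from \df structures to log structures, I would use the notation $L(a) = (L_a, \sigma^L_a)$ of Remark~\ref{rmk:conventions-symmetric-monoidal} and define $M$ to be the sheaf on $X\et$ whose sections over $U$ are triples $(a, f, \theta)$ with $a \in A(U)$, $f \in \cO(U)$, and $\theta\colon L(a) \simeq (\cO_U, f)$ an isomorphism in $\div U$; equivalently $M = \cO_X \times_{\div_{X\et}} A$, the sheaf of sections of the fibered product of $\iota$ and $L$. Here $\theta$ is a trivialization of $L_a$ and $f = \theta(\sigma^L_a)$ is determined by $\theta$. The monoid operation comes from $A$, $\cO_X$ and the monoidal data of $L$ and $\iota$, and I set $\rho\colon M \arr \cO_X$ by $(a, f, \theta) \arrto f$. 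Then $(M, \rho)$ is a log structure: the group $\cO_X^*$ acts freely by $u\cdot(a,f,\theta) = (a, uf, u\theta)$, since the trivializations of a fixed $L_a$ form an $\cO_X^*$-torsor, so $(M,\rho)$ is quasi-integral; and $(a,f,\theta) \in \rho^{-1}\cO_X^*$ forces $\sigma^L_a$ to be nowhere vanishing, hence $L(a) \simeq \one$ and $a \in \ker L = 0$, whence $\rho^{-1}\cO_X^*$ is identified via $\epsilon^L$ with the trivializations of $L_0 \simeq \cO_X$, which maps isomorphically to $\cO_X^*$ under $\rho$. I set $\Psi(A,L) \eqdef (M,\rho)$, defined on morphisms in the evident way.

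Finally I would check that $\Phi$ and $\Psi$ are quasi-inverse. For $\Phi\Psi(A, L)$, the projection $M \arr A$, $(a, f, \theta) \arrto a$, is $\cO_X^*$-invariant and, because $L_a$ is étale-locally trivial, is étale-locally surjective with fibers single $\cO_X^*$-orbits, so it induces $\overline M = M/\cO_X^* \simeq A$; under this identification $\overline L$, being the factorization of $(a, f,\theta)\arrto (\cO_U, f)$, is carried to $L$, since $\theta$ itself exhibits $L(a) \simeq (\cO_U, f)$, yielding a natural isomorphism $\Phi\Psi \simeq \id$. For $\Psi\Phi(M, \rho)$, quasi-integrality makes $M \arr \overline M$ an $\cO_X^*$-torsor, so $\overline L(\overline m) = (L_{\overline m}, \sigma_{\overline m})$ is the line bundle with section attached to this torsor and to $\rho$; a trivialization $\theta$ of $L_{\overline m}$ is the same datum as a local lift $m \in M$ of $\overline m$, and then $\theta(\sigma_{\overline m}) = \rho(m)$, which identifies the sheaf of triples of $\Psi\Phi(M,\rho)$ with $M$ compatibly with $\rho$, giving $\Psi\Phi \simeq \id$. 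I expect the main obstacle to be the careful treatment of $\cO_X \times_{\div_{X\et}} A$ as an honest sheaf of monoids with a coherent operation, and, closely tied to it, the proof that $M \arr \overline M$ is étale-locally surjective and a torsor: this is exactly where the stack property of $\div_{X\et}$ (local existence of trivializations of invertible sheaves) enters, and it underlies both natural isomorphisms above. The remaining verifications — naturality in $(M,\rho)$ and in $(A,L)$, and that the constructed comparisons are isomorphisms of log and of \df structures respectively — are routine diagram checks.
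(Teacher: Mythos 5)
Your proof is correct and follows essentially the same route as the paper: both directions rest on the same two constructions, namely the fibered product $M = A \times_{\div_{X\et}} \cO_X$ (your sheaf of triples $(a,f,\theta)$) for passing from \df structures to quasi-integral log structures, and the quotient $\overline{M} = M/\cO_X^*$ for the other direction. The only variation is that you produce $\overline{L}\colon \overline{M} \arr \div_{X\et}$ via the universal associated \df structure of Proposition~\ref{prop:pdf-to-df} applied to $(M,\iota\circ\rho)$, whereas the paper uses the identification of $\div_{X\et}$ with the stack quotient $[\cO_X/\cO_X^*]$ and the $\cO_X^*$-equivariance of $\rho$; by the uniqueness in Proposition~\ref{prop:pdf-to-df} these agree, and your explicit quasi-inverse verifications merely spell out what the paper dismisses as ``easily seen.''
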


\begin{proof}
Consider the morphism of \sm fibered categories $\cO_{X} \arr \div_{X\et}$ sending a section $f \in \cO_{X}(U)$ into $(\cO_{U}, f)$. This is a torsor under $\cO^{*}_{X}$. There is a section $X\et \arr \div_{X\et}$ sending $U \arr X$ into $(\cO_{U}, 1)$; this gives an equivalence of $X\et$ with the full fibered subcategory of $\div_{X\et}$ of invertible sheaves with a nowhere vanishing section, and so is represented by Zariski open embeddings. The inverse image of $X\et$ in $\cO_{X}$ is $\cO^{*}_{X}$.

If $(A, L)$ is a \df structure on $X$, define $M$ as the fibered product $A \times_{\div_{X\et}} \cO_{X}$. This is a $\cO_{X}^{*}$-torsor over $A$, hence it is equivalent to a sheaf.  The \sm structures of $A$, $\cO_{X}$ and $\div_{X\et}$ induce a \sm structure on $M$, that is, $M$ acquires a structure of a sheaf of monoids. By hypothesis, the inverse image of $X\et \subseteq \div_{X\et}$ via $L\colon A \arr \div_{X\et}$ is the zero-section $X\et \subseteq A$; hence by base change to $\cO_{X}$ the inverse image of $\cO^{*}_{X} \subseteq \cO_{X}$ in $M$ coincides with the inverse image of $X\et \subseteq A$, which is again $\cO^{*}_{X}$. This shows that $M \arr \cO_{X}$ is in fact a quasi-integral log structure.

This construction gives a functor from \df structures on $X$ to quasi-integral log structures (the action of the functor on arrows is easy to construct). In the other direction, let $\rho\colon M \arr \cO_{X}$ be a quasi-integral log structure; the free action of $\cO^{*}_{X} \simeq \rho^{-1}\cO^{*}_{X}$ makes $M$ into a $\cO^{*}_{X}$-torsor over $\overline{M} \eqdef M/\cO^{*}_{X}$. The homomorphism $\rho\colon M \arr \cO_{X}$ is $\cO^{*}_{X}$-equivariant; the stack-theoretic quotient $[\cO_{X}/\cO^{*}_{X}]$ is $\div_{X\et}$, so we obtain a \sm functor $L\colon \overline{M} \arr \div_{X\et}$. It is immediate to see that the kernel of $L\colon  \overline{M} \arr \div_{X\et}$ is trivial; so $(\overline{M}, L)$ is a \df structure on $X$. This is the action on objects of a functor from quasi-integral log structures to \df structures, which is easily seen a quasi-inverse to the previous functor.
\end{proof}

\subsection{Direct and inverse images of \df structures}\label{subsec:direct-inverse}

Let $f: X'\rightarrow X$ be a morphism of schemes. 
The stack $f_* \div_{X'\et}$ on $X\et$ is defined as the fibered product 
$\div_{X'\et}\times_{X'\et} X\et$, where the functor $X\et \arr X'\et$ is given by fibered product. In other words, if $U \arr X$ is an étale morphism, we have
   \[
   f_* \div_{X'\et}(U) = \div(X' \times_{X} U).
   \]
There is a natural morphism of stacks: $\div_{X\et}\rightarrow f_* \div_{X'\et}$ on $X\et$, given by pullback\footnote{And accordingly there is a natural morphism  of stacks $f^*\div_{X\et}\rightarrow \div_{X'\et}$ on $X'\et$, but since the definition of $f^*\div_{X\et}$ is quite complicated, we won't use it.}. 

Let us begin with the definition of the direct image of a \df structure $(A',L')$ on $X'$.

\begin{lemma}
	
The \sm stack $f_*A'\times_{f_*\div_{X'\et}} \mathcal O_X$, fibered product of $f_*L'\colon f_*A'\rightarrow f_*\div_{X'\et}$ and of the composite $\mathcal O_X\rightarrow \div_{X\et}\rightarrow f_* \div_{X'\et}$, is equivalent to a sheaf of monoids on $X$. 

\end{lemma}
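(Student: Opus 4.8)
The plan is to exploit the fact that \emph{both} of the maps defining the fibered product emanate from sheaves, so that their fibered product (taken in the $2$\dash categorical sense, over the stack $f_*\div_{X'\et}$) is automatically fibered in sets, and hence equivalent to a sheaf. Concretely, set $M \eqdef f_*A'\times_{f_*\div_{X'\et}} \cO_X$. First I would record that $M$ is a \sm stack: the direct image $f_*A'$ of the sheaf of monoids $A'$ is again a sheaf of monoids, hence a \sm stack (with $f_*A'(U) = A'(X'\times_X U)$); the structure sheaf $\cO_X$ is a sheaf, hence a \sm stack; and $f_*\div_{X'\et}$ is a \sm stack by its construction. Since the $2$\dash fibered product of stacks over a stack is again a stack, and since both legs $f_*L'$ and the composite $\cO_X \arr \div_{X\et}\arr f_*\div_{X'\et}$ are \sm functors, $M$ carries a canonical structure of \sm stack over $X\et$.

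The heart of the argument is to check that $M$ is fibered in sets. Over an object $U \arr X$ of $X\et$, an object of $M$ is a triple $(a', g, \phi)$ with $a'\in f_*A'(U)$, $g\in \cO_X(U)$, and $\phi$ an isomorphism in $\div(X'\times_X U)$ between $(f_*L')(a')$ and the pullback to $X'\times_X U$ of $(\cO_U, g)$. A morphism $(a'_1, g_1,\phi_1)\arr(a'_2,g_2,\phi_2)$ consists of a morphism $a'_1\arr a'_2$ in $f_*A'(U)$ and a morphism $g_1\arr g_2$ in $\cO_X(U)$ making the evident square commute. Because $f_*A'$ and $\cO_X$ are sheaves, their fibers are discrete; such morphisms therefore force $a'_1=a'_2$ and $g_1=g_2$, and then commutativity of the square forces $\phi_1=\phi_2$. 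Hence there are no nontrivial morphisms in any fiber, i.e.\ $M$ is fibered in sets.

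A stack fibered in sets is equivalent to the stack associated with a sheaf; combined with the monoidal structure this identifies $M$ with a sheaf of commutative monoids on $X\et$, namely $U\mapsto\{(a',g,\phi)\}$ with operation $(a'_1,g_1,\phi_1)\otimes(a'_2,g_2,\phi_2)=(a'_1+a'_2,\, g_1g_2,\, \phi_1\otimes\phi_2)$, the last component formed from the monoidal comparison data of the two legs. This follows the pattern of the proof of Theorem~\ref{thm:df<->log}, where the analogous fibered product was recognized as an $\cO^{*}_{X}$\dash torsor over $A$ and hence a sheaf; here the torsor structure is not directly available after pushforward, which is why I would argue instead via the general truncation principle. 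The only genuine point requiring care is the bookkeeping in the $2$\dash fibered product: one must confirm that the comparison isomorphism $\phi$ contributes no automorphisms and that the induced monoidal comparison isomorphisms make $\otimes$ well defined, associative, unital and commutative. I expect this verification to be the main obstacle, though it is routine rather than deep.
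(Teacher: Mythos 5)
Your proof is correct, but it takes a genuinely different route from the paper's. The paper reduces everything to the source scheme: it sets $M' \eqdef A'\times_{\div_{X'\et}}\cO_{X'}$, which is (equivalent to) a sheaf of monoids on $X'$ by the proof of Theorem~\ref{thm:df<->log} --- there the $\cO^{*}_{X'}$\dash torsor argument \emph{is} available, since both legs live over the same scheme --- and then observes that $f_*A'\times_{f_*\div_{X'\et}}\cO_X\simeq f_*M'\times_{f_*\cO_{X'}}\cO_X$ (pushforward preserves the $2$\dash fibered product, and the base changes cancel); the right-hand side is an ordinary $1$\dash categorical fibered product of sheaves of monoids over a sheaf of monoids, hence manifestly a sheaf of monoids. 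Your argument instead invokes a general truncation principle, and your central computation is sound: a fiber morphism in the $2$\dash fibered product is a pair of fiber morphisms in the two legs, which are forced to be identities because $f_*A'$ and $\cO_X$ are fibered in sets, and the compatibility square then forces the comparison isomorphisms $\phi_1=\phi_2$; so the fibers are discrete, and a stack fibered in sets is a sheaf. Your worry about the monoidal bookkeeping is also easily dispatched: in a discrete category the coherence isomorphisms are identities, so the operation $(a'_1,g_1,\phi_1)\otimes(a'_2,g_2,\phi_2)$ is strictly associative, unital and commutative, giving a sheaf of commutative monoids. You are moreover right that the torsor mechanism does not transport across $f$ --- the projection $f_*A'\times_{f_*\div_{X'\et}}\cO_X \arr f_*A'$ need not even be locally surjective, consistent with the paper's footnote that the resulting log structure need not be quasi-integral --- which is precisely why some other mechanism is needed. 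Comparing the two: your route is self-contained (it never uses Theorem~\ref{thm:df<->log}, and in fact would reprove the sheaf claim occurring inside it), while the paper's route is shorter given what has already been established and yields the explicit identification $f_*M'\times_{f_*\cO_{X'}}\cO_X$ of the direct-image monoid in terms of the log structure $M'$ on $X'$, a description one actually wants when comparing with the classical direct image of log structures.
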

\begin{proof}
	Set $M'=A'\times_{\div_{X'\et}}\mathcal O_{X'}$. The proof of Theorem \ref{thm:df<->log} shows that $M'$ is (equivalent to) a sheaf of monoids on $X'$, and clearly $f_*A'\times_{f_*\div_{X'\et}} \mathcal O_X\simeq f_*M'\times_{f_*\mathcal O_{X'}}\mathcal O_X$. \end{proof}

\begin{definition}
	Let $f: X'\rightarrow X$ be a scheme morphism, and $(A',L')$ a \df structure on $X'$. We define the direct image  $f_*(A',L')$ as the \df structure associated with the \pdf structure $f_*A'\times_{f_*\div_{X'\et}} \mathcal O_X\rightarrow \mathcal O_X \rightarrow \div_{X\et}$\footnote{The morphism $f_*A'\times_{f_*\div_{X'\et}} \mathcal O_X\rightarrow \mathcal O_X$ defines a log structure, but it is not necessarily quasi-integral.}.
\end{definition}

We now define the pull-back of a log structure $(A,L)$ on $X$ along the scheme morphism $f: X'\rightarrow X$.

\begin{proposition}
	\label{prop:pull-back-df-structure}
	There is up to unique isomorphism a unique pair $\left(f^*L,\alpha \right)$ where $f^*L\colon f^*A\rightarrow \div_{X'\et}$ is a \df structure on $X'$ and $\alpha$ an isomorphism of \sm functors between the composites $A\rightarrow f_*f^*A\xarr{f_*f^* L}f_*\div_{X'\et}$ and $A\xarr{L}\div_{X\et}\rightarrow f_*\div_{X'\et}$.
\end{proposition}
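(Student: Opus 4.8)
The plan is to realize $f^{*}(A,L)$ as the left adjoint to the direct image $f_{*}$ just defined, by composing three elementary left adjoints and then pinning down the result by a universal mapping property; the uniqueness ``up to unique isomorphism'' in the statement is exactly the uniqueness of a solution to such a problem, so establishing the universal property is the whole game.

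First I would produce a \pdf structure on $X'$ at the level of the inverse image presheaf. Write $f^{-1}A$ for the inverse image sheaf of monoids on $X'\et$, with unit $\eta\colon A \arr f_{*}f^{-1}A$. A section of the presheaf inverse image over an \'etale $V \arr X'$ is represented by a triple $(U,u,a)$ with $U \arr X$ \'etale, $u\colon V \arr U$ a morphism over $f$, and $a \in A(U)$; sending such a triple to the pullback $u^{*}L(a) \in \div(V)$ defines a \sm functor from the presheaf inverse image into $\div_{X'\et}$, monoidal and compatible with restriction by functoriality of pullback of line bundles with sections. Since $\div_{X'\et}$ is a \sm stack, Proposition~\ref{prop:sheafification} extends this uniquely (up to isomorphism) to a \sm functor $\Lambda\colon f^{-1}A \arr \div_{X'\et}$, yielding a \pdf structure $(f^{-1}A,\Lambda)$ on $X'$. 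By construction $\Lambda$ is the transpose, under the sheaf adjunction $f^{-1}\dashv f_{*}$, of the composite $A \xarr{L}\div_{X\et}\arr f_{*}\div_{X'\et}$ obtained from the pullback morphism of stacks.

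Second I would pass to the associated \df structure. The functor $\Lambda$ need not have trivial kernel: a line bundle with section whose section is a nonzerodivisor on $X$ may pull back to one whose section is invertible on $X'$ (for instance when $f$ is an open immersion missing the support), so the kernel genuinely grows. Applying Proposition~\ref{prop:pdf-to-df} I set $(f^{*}A,f^{*}L)\eqdef (f^{-1}A,\Lambda)\ass$; then $f^{*}A$ is the sheafification of the presheaf quotient of $f^{-1}A$ by $\ker\Lambda$, and $f^{*}L$ has trivial kernel, so $(f^{*}A,f^{*}L)$ is a genuine \df structure on $X'$. The monoid map $A \xarr{\eta} f_{*}f^{-1}A \arr f_{*}f^{*}A$ together with the canonical \sm isomorphism from the previous paragraph supplies the isomorphism $\alpha$ between $A\arr f_{*}f^{*}A \xarr{f_{*}f^{*}L} f_{*}\div_{X'\et}$ and $A \xarr{L}\div_{X\et}\arr f_{*}\div_{X'\et}$; that the two agree up to $\alpha$ is immediate from the fact that $\Lambda$ was defined by pulling back $L$. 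For uniqueness I would then verify that $\bigl((f^{*}A,f^{*}L),\alpha\bigr)$ solves the universal problem: for every \df structure $(B',N')$ on $X'$, sending a morphism $(f^{*}A,f^{*}L)\arr(B',N')$ to its image under $f_{*}$ precomposed with the unit $(\eta,\alpha)$ is a bijection onto morphisms $(A,L)\arr f_{*}(B',N')$. This follows by chaining the three universal properties in play --- the sheaf adjunction $f^{-1}\dashv f_{*}$, the universal property of sheafification into a stack (Proposition~\ref{prop:sheafification}), and that of the associated \df structure (Proposition~\ref{prop:pdf-to-df}) --- and a solution of a universal problem is unique up to a unique isomorphism.

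The main obstacle I anticipate is the bookkeeping at the junction of these three adjunctions: neither trivial kernel nor the sheaf property survives the naive inverse image, so one must interpose both Proposition~\ref{prop:sheafification} and Proposition~\ref{prop:pdf-to-df}, and then check that the isomorphism $\alpha$ assembled across all three layers is coherent, i.e.\ compatible with the monoidal structures and with the \sm isomorphism defining $\div_{X\et}\arr f_{*}\div_{X'\et}$. The one genuinely computational point is the monoidal coherence of the representative-dependent rule $(U,u,a)\mapsto u^{*}L(a)$ --- its independence of the chosen representative in the colimit, and its compatibility with $\mu^{L}$ and $\epsilon^{L}$ --- but this is routine given the functoriality of pullback in $\div$.
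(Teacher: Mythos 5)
Your construction does not prove the statement as it stands, because it silently changes the underlying sheaf of monoids. Applying Proposition~\ref{prop:pdf-to-df} to the \pdf structure $(f^{-1}A,\Lambda)$ yields a \df structure whose underlying sheaf is the sheafification of the quotient $f^{-1}A/\ker\Lambda$, whereas the proposition requires $f^*L$ to be defined on $f^*A = (f^{-1}A)\sh$, the ordinary inverse image sheaf of monoids (this is the $f^*A$ used throughout the paper, whose stalk at $y$ is $A_{f(y)}$), and $\alpha$ to compare composites through $f_*f^*A$. So your argument establishes the statement only if one additionally shows that $\ker\Lambda$ is trivial, i.e.\ that your quotient step does nothing --- and that verification, which you skip, is precisely the substantive content of the paper's proof. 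The paper checks it stalkwise: if $a' \in f^*A(U')$ has $f^*L(a') \simeq \one$, then locally $a' = f_i^*a_i$ with $a_i \in A(U_i)$, and for each geometric point $x$ of $U_i'$ the section $\sigma_{a_i}$ is nonvanishing at $f_i(x)$, hence invertible on an open neighborhood $W$ of $f_i(x)$; triviality of $\ker L$ applied over $W$ gives $a_i|_W = 0$, so $f_i^*a_i$ vanishes near $x$, and since $x$ is arbitrary, $a' = 0$. (The adjunction-style universal property you verify is correct --- it is the content of the Remark following the definition --- but it cannot substitute for identifying the underlying sheaf with $f^*A$, which is what the statement asserts.)

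Moreover, your justification for why the quotient is genuinely needed is incorrect, and the purported example fails: you are conflating the sheaf $A$, which plays the role of $\overline{M} = M/\cO_X^*$, with $M$ itself (or with a chart $P \arr \cO(X)$). Take $X = \AA^1$, $L$ the \df structure generated by $(\cO_X, t)$, and $f$ the open immersion of $\AA^1 \setminus \{0\}$: then $A$ is a skyscraper copy of $\NN$ supported at the origin, because away from the origin $t$ is invertible and the generator is already killed on $X$ when passing from the \pdf structure $\NN_X \arr \div_{X\et}$ to its associated \df structure; hence $f^{-1}A = 0$ and there is no kernel to grow. In general, for a \df structure the locus where $\sigma_a$ is invertible coincides with the locus where the stalk of $a$ vanishes (triviality of $\ker L$ localizes, since invertibility of a section of a line bundle is an open, pointwise condition), so pullback can never create new kernel. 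What does acquire kernel under pullback is a \emph{chart} or \pdf datum --- and for those the interposition of Propositions~\ref{prop:sheafification} and~\ref{prop:pdf-to-df} is indeed necessary --- but the \df structure itself is already reduced, and the paper's proof of Proposition~\ref{prop:pull-back-df-structure} consists exactly of the trivial-kernel check you asserted was false.
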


\begin{proof}
	The uniqueness statement is easily proven. To show the existence, we can work with the pull-back presheaf $f^{-1}A$, and then use Proposition \ref{prop:sheafification} to sheafify. Let $U'\rightarrow X'$ be an étale morphism, then $f^{-1}A(U')=\varinjlim_{U'\rightarrow U} A(U)$, where $U'\rightarrow U$ varies in all $X$-morphisms from $U'$ to an étale scheme $U\rightarrow X$. Since for each such morphism, we have a monoidal functor $A(U)\xarr{L(U)}\div(U)\arr\div(U')$, and these functors are compatible with restriction, we get a monoidal functor $f^{-1}A(U')\rightarrow  \div(U')$, also compatible with restrictions. This defines a monoidal functor $f^*L:f^*A\rightarrow \div_{X'\et}$, and the existence of $\alpha$ is obvious by construction. The only thing that is left to check is that $f^*L$ has trivial kernel.

	Let $U'\rightarrow X'$ be an étale morphism, and $a'\in f^*A(U')$, such that $f^*L(a')$ is invertible. There exists an étale covering $\{U'_i\arr U'\}_i$ of $U'$, $X$-morphisms $f_i:U_i'\arr U_i$ to an étale scheme $U_i\rightarrow X$, and sections $a_i\in A(U_i)$, such that $a'_{|U'_i}={f_i^*a_i}_{|U'_i}$. Then $f^*L(a')_{|U'_i}$ is invertible, and $f^*L(a')_{|U'_i}\simeq f_i^*L(a_i)$ by the universal property. Let $x:\spec \Omega \rightarrow U'_i$ be a geometric point, we deduce that $x^*(f_i^*(\sigma_{a_i}))$ does not vanish, and so $\sigma_{a_i}$ is invertible on a neighborhood of $f_i(x)$, and since $L$ has trivial kernel, $a_i=0$ on this neighborhood, hence $f_i^*a_i=0$ on a neighborhood of $x$. Thus since $x$ is arbitrary, $f_i^*a_i=0$ on $U'_i$, and this implies $a'_{|U'_i}= 0$, and finally $a'=0$.
\end{proof}

\begin{definition}
	Let $f: X'\rightarrow X$ be a scheme morphism, and $(A,L)$ a \df structure on $X$. The \df structure $f^*L:f^*A\rightarrow \div_{X'\et}$ on $X'$ defined by Proposition \ref{prop:pull-back-df-structure} is called the pull-back \df structure, and denoted by $f^*(A,L)$.
\end{definition}

\begin{remark}
	Proposition \ref{prop:pull-back-df-structure} shows in fact a bit more: there is a canonical adjunction between the functors $(A,L)\mapsto f^*(A,L)$ and $(A',L')\mapsto f_*(A',L')$.
\end{remark}

\subsection{Charts for \df structures}\label{subsec:charts}

If $P$ is a monoid and $X$ is a scheme, we denote by $P_{X}$ the constant presheaf on $X\et$ such that $P_{X}(U) = P$ for all $U \arr X$ in $X\et$, and by $P_{X}\sh$ the associated constant sheaf. Notice that if $A$ is a sheaf of monoids on $X\et$, we have bijective correspondences between homomorphism of monoids $P \arr A(X)$, homomorphism of presheaves of monoids $P_{X} \arr A$, and homomorphism of sheaves of monoids $P_{X}\sh \arr A$.

\begin{definition}
Let $X$ be a scheme, $A$ be a sheaf of monoids on $X\et$. A \emph{chart} for $A$ consists of  a homomorphism of monoids $P \arr A(X)$, such that $P$ is a finitely generated monoid, and the induced homomorphism $P_{X}\sh \arr A$ is a cokernel in the category of sheaves of monoids.

An \emph{atlas} consist of an étale covering $\{X_{i} \arr X\}$ and a chart $P_{i} \arr A(X_{i})$ for each restriction $A]_{X_{i}}$.
\end{definition}

\begin{definition}
A sheaf of monoids $A$ on $X\et$ is \emph{coherent} if it is sharp and has an atlas. A sheaf of monoids is \emph{fine} if it is coherent and integral.

\end{definition}

Being a chart is property that can be checked at the level of stalks at geometric points of $X$.

\begin{proposition}\label{prop:chart-stalks}
Let $X$ be a scheme, $A$ be a sheaf of monoids on $X\et$. A homomorphism of monoids $P \arr A(X)$ is a chart if and only if $P$ is finitely generated, and for each geometric point $x\colon \spec\Omega \arr X$ the induced homomorphism of monoids $P \arr A_{x}$ is a cokernel.
\end{proposition}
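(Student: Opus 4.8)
The statement asserts that being a chart—a global condition phrased in terms of the sheaf $A$—can be detected stalkwise at geometric points. The plan is to prove both implications, reducing in each case to the already-understood behavior of cokernels of sheaves of monoids. Recall from the preliminary section that a homomorphism of sheaves of monoids $P_X\sh \to A$ is a cokernel if and only if, writing $K$ for its kernel, the induced map $P_X\sh/K \to A$ is an isomorphism of sheaves. Since isomorphisms of sheaves are exactly the maps that are isomorphisms on all stalks, and since stalks of $P_X\sh$ at a geometric point $x$ are simply $P$ itself (the constant sheaf), the key is to commute the formation of the cokernel with the passage to stalks.

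\textbf{The forward direction.} First I would assume $P \to A(X)$ is a chart, so $P_X\sh \to A$ is a cokernel of sheaves. Fix a geometric point $x\colon \spec\Omega \to X$. Taking stalks at $x$ is an exact functor (it is a filtered colimit over �tale neighborhoods, hence commutes with finite limits and all colimits in the category of monoids). In particular, the kernel of the induced map $P \to A_x$ is the stalk $K_x$ of the sheaf-kernel $K$, and the quotient $(P_X\sh/K)_x$ is $P/K_x$. Since $P_X\sh/K \to A$ is an isomorphism of sheaves, its stalk $P/K_x \to A_x$ is an isomorphism of monoids. By the cokernel criterion from the monoids subsection, this says exactly that $P \to A_x$ is a cokernel. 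Thus the stalkwise condition holds.

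\textbf{The converse.} Now suppose $P$ is finitely generated and $P \to A_x$ is a cokernel for every geometric point $x$. Let $K$ denote the kernel of the sheaf map $P_X\sh \to A$ and consider the induced map $\phi\colon P_X\sh/K \to A$. I want to show $\phi$ is an isomorphism, which by definition makes $P_X\sh \to A$ a cokernel and hence $P \to A(X)$ a chart. It suffices to check that $\phi$ is an isomorphism on every geometric stalk. As above, $(P_X\sh/K)_x = P/K_x$, and $K_x$ is the kernel of $P \to A_x$; the hypothesis that $P \to A_x$ is a cokernel gives precisely that $P/K_x \to A_x$ is an isomorphism. Since a morphism of sheaves that is an isomorphism on all geometric stalks is an isomorphism, we conclude $\phi$ is an isomorphism, as desired.

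\textbf{The main obstacle.} The crux of the argument, and the only point requiring genuine care, is the interchange of stalks with the cokernel (equivalently, with the kernel and the presheaf quotient followed by sheafification). Concretely, one must verify that the stalk of the sheaf $P_X\sh/K$ at $x$ is the monoid-theoretic quotient $P/K_x$, where $K_x$ is the stalk of the kernel. This rests on the fact that sheafification does not change stalks and that the stalk functor, being a filtered colimit of sections over �tale neighborhoods of $x$, is exact—it preserves the kernel (a finite limit) and the presheaf quotient (a colimit) simultaneously. Once this exactness is in hand, both implications become formal consequences of the cokernel criterion for monoids. The finite generation of $P$ plays no role in the stalkwise equivalence itself; it is carried along merely because it is part of the definition of a chart, so I would simply note that it appears identically as a hypothesis on both sides.
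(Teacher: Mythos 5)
Your proposal is correct and follows essentially the same route as the paper: both arguments reduce the chart condition to the induced map $(P_X/K)\sh \arr A$ being an isomorphism of sheaves, check this on geometric stalks, and use the exactness of the stalk functor (a filtered colimit) to identify the stalk of the quotient with $P/K_x$ and the kernel of $P \arr A_x$ with $K_x$, concluding via the cokernel criterion for monoids. The only cosmetic difference is that you take $K$ to be the kernel of the sheaf map $P_X\sh \arr A$ while the paper uses the presheaf kernel of $P_X \arr A$; since sheafification does not change stalks, this is immaterial.
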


\begin{proof}
Let $K$ be the kernel of the induced homomorphism of presheaves of monoids $P_{X} \arr A$. The homomorphism $P_{X}\sh \arr A$ is a cokernel if and only if the induced homomorphism $(P_X/K)\sh \arr A$ is an isomorphism; hence $P_{X}\sh \arr A$ is a cokernel if and only if $P/K_{x} = (P_{X}/K)\sh_{x} \arr A_{x}$ is an isomorphism for all $x$. On the other hand the kernel of $P \arr A_{x}$ is the stalk $K_{x}$, and the stalk of $(P_X/K)\sh$ at $x$ is $P/K_{x}$; so $P/K_{x} \arr A_{x}$ is an isomorphism if and only if $P \arr A_{x}$ is a cokernel.
\end{proof}

In what follows we are going to use Rédei's theorem, stating that every finitely generated commutative monoid is finitely presented (see for example \cite[Theorem~72]{redei}, or \cite{grillet}).

If a sheaf of monoids is coherent, around each geometric point of $X$ there exists a minimal chart.

\begin{proposition}\label{prop:minimal-chart}
Let $A$ be a coherent sheaf of monoids on $X\et$, and let $x\colon \arr \spec\Omega \arr X$ a geometric point. Then there exists an étale neighborhood $\spec\Omega \arr U \arr X$ of $x$ and a chart $P \arr A(U)$ for the restriction $A ]_{U}$, such that the induced homomorphism $P \arr A_{x}$ is an isomorphism.
\end{proposition}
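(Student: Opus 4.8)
The plan is to build the chart directly out of the stalk $A_{x}$ itself, and then to use the atlas coming from amenability to check that what we build is still a chart on an entire neighborhood.

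\medskip

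First I would produce a chart near $x$. Since $A$ is amenable it has an atlas $\{X_{i} \arr X\}$ with charts; as an \'etale covering is surjective, the geometric point $x$ lifts to some $X_{i}$, so (renaming $V \eqdef X_{i}$ and $Q \eqdef P_{i}$) we obtain an \'etale neighborhood $V$ of $x$ together with a chart $Q \arr A(V)$, with $Q$ finitely generated. By Proposition~\ref{prop:chart-stalks} the induced map $Q \arr A_{x}$ is a cokernel, so $A_{x} \cong Q/K$ with $K = \ker(Q \arr A_{x})$; in particular $A_{x}$ is finitely generated. I then set $P \eqdef A_{x}$ and write $\pi\colon Q \arr P$ for the cokernel projection $Q \arr Q/K$.

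\medskip

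Next I would realize $P$ as a homomorphism on a neighborhood. Choose generators $\bar m_{1}, \dots, \bar m_{r}$ of $P = A_{x}$ and lift them to sections $m_{1}, \dots, m_{r} \in A(U)$ on a smaller neighborhood $U \subseteq V$. By R\'edei's theorem $P$ is finitely presented, hence a quotient of $\NN^{r}$ (via $\epsilon_{i} \mapsto \bar m_{i}$) by finitely many relations; each such relation is an identity among the $\bar m_{i}$ holding in the stalk $A_{x}$, hence holding in $A(U)$ after shrinking $U$. Carrying this out for the finitely many relations, the assignment $\epsilon_{i} \mapsto m_{i}$ descends to a homomorphism $\theta\colon P \arr A(U)$. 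By construction the composite $P \xarr{\theta} A(U) \arr A_{x}$ fixes each generator $\bar m_{i}$, hence is the identity of $P = A_{x}$, so $\theta$ induces an isomorphism on the stalk at $x$, as desired.

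\medskip

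It then remains to check that $\theta$ is a chart, i.e. by Proposition~\ref{prop:chart-stalks} that $\theta_{y}\colon P \arr A_{y}$ is a cokernel for every geometric point $y$ of $U$. I would first shrink $U$ so that the restricted atlas chart $c\colon Q \arr A(U)$ equals $\theta\circ\pi$: the two homomorphisms agree at the stalk $x$ (both are the cokernel projection $Q \arr A_{x}$), and since $Q$ is finitely generated it is enough that their values on the finitely many generators coincide, which holds on a neighborhood. Fixing any geometric point $y$ of $U$ we then have $c_{y} = \theta_{y}\circ\pi$ with $c_{y}$ a cokernel and $\pi$ surjective. Surjectivity of $\theta_{y}$ is immediate from that of $c_{y}$; for injectivity of $P/\ker\theta_{y} \arr A_{y}$ one uses $\ker c_{y} = \pi^{-1}(\ker\theta_{y})$ and the surjectivity of $\pi$ to push a relation witnessing $\sim_{\ker c_{y}}$ in $Q$ forward to a relation witnessing $\sim_{\ker\theta_{y}}$ in $P$. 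Thus $\theta_{y}$ is a cokernel for all $y$, and $\theta\colon P \arr A(U)$ is the sought minimal chart.

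\medskip

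The main obstacle is precisely this final step. The monoid $P = A_{x}$ is defined using only the stalk at the single point $x$, so there is no reason \emph{a priori} for it to give a cokernel at neighboring points $y$; the entire content of the proposition is that it does. The given atlas chart $Q$ is what transports the cokernel property from the $y$-stalks of $Q$ to the $y$-stalks of $P$, and the surjectivity of $\pi\colon Q \arr P$ is the key structural feature that makes this transfer go through.
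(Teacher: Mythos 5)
Your proof is correct, but it reaches the minimal chart by a different mechanism than the paper's. The paper first proves an auxiliary lemma: every cokernel $P \arr Q$ with $P$ finitely generated is the cokernel of a homomorphism $F \arr P$ from a finitely generated \emph{free} monoid, which is where R\'edei's theorem enters (applied to the congruence defining the quotient). It applies this to the stalk cokernel $P \arr A_{x}$ of the atlas chart, shrinks so that the finitely many images of $F$ become $0$ in $A(X)$ --- whence the atlas chart automatically factors through $Q \eqdef \coker(F \arr P)$ --- and then the chart property for $Q$ is inherited at the sheaf level in one line (the quotient of a cokernel is a cokernel), with $Q \arr A_{x}$ an isomorphism by construction. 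You instead take the new monoid to be literally $A_{x} = Q/K$, spread the identity of $A_{x}$ out to a homomorphism $\theta\colon A_{x} \arr A(U)$ using R\'edei's theorem in the standard ``finitely presented objects spread out'' form (the same device the paper itself uses in the proof of Proposition~\ref{prop:amenable-local}), shrink once more so that $\theta\circ\pi$ coincides with the atlas chart, and transfer the cokernel property stalkwise via Proposition~\ref{prop:chart-stalks}. Your transfer step is sound: given $c_{y} = \theta_{y}\circ\pi$ with $\pi$ surjective and $c_{y}$ a cokernel, lifting $p, p' \in P$ along $\pi$, invoking the cokernel property of $c_{y}$ to get $q + k = q' + k'$ with $k, k' \in \ker c_{y}$, and pushing forward uses only the inclusion $\pi(\ker c_{y}) \subseteq \ker\theta_{y}$, which holds since $\theta_{y}\pi(k) = c_{y}(k) = 0$. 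What each route buys: the paper's free presentation of the kernel makes the factorization through $Q$ and the chart property automatic, with no stalkwise argument and one fewer compatibility shrinkage; yours avoids proving the free-presentation lemma altogether, at the cost of the explicit stalkwise transfer, and has the small conceptual merit of exhibiting the minimal chart as canonically $A_{x}$ itself rather than a monoid abstractly isomorphic to it.
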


So, for example, a fine sheaf of monoids has an atlas $\bigl(\{X_{i} \to X\}, \{P_{i} \to A(X_{i})\}\bigr)$ in which all the $P_{i}$ are integral and sharp.

\begin{proof}
Let us start with a Lemma.

\begin{lemma}
Let $P$ be a finitely generated monoid. Any cokernel $P \arr Q$ is the cokernel of a homomorphism $F \arr P$, where $F \simeq \NN^{r}$ is a finitely generated free monoid.
\end{lemma}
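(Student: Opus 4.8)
The plan is to reduce the statement to a finiteness property of \emph{congruences} supplied by R\'edei's theorem, rather than trying to finitely generate the kernel submonoid directly (which need not be finitely generated, even for finitely generated $P$). Throughout I will use that, for a submonoid $C \subseteq P$, the cokernel $P \arr P/C$ is the quotient of $P$ by the congruence $\sim_{C}$, where $a \sim_{C} b$ exactly when $a + s = b + t$ for some $s,t \in C$; one checks directly that $\sim_{C}$ is a monoid congruence, and that it is the \emph{smallest} congruence on $P$ identifying every element of $C$ with $0$.

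First I would record the shape of the given map. Since $P \arr Q$ is a cokernel, it is (up to the canonical isomorphism) the quotient map $P \arr P/{\equiv_{Q}}$, where $p \equiv_{Q} p'$ means $p$ and $p'$ have the same image in $Q$; moreover this congruence has the special form $\equiv_{Q} = {\sim_{K}}$, with $K$ the kernel of $P \arr Q$. Indeed, writing $P \arr Q$ as the cokernel of some inclusion $C \subseteq P$, the relation $\sim_{C}$ depends only on the kernel closure $K$ of $C$: if $a + s = b + t$ with $s,t \in K$, one adds suitable elements of $C$ to both sides to rewrite this as an equality witnessing $a \sim_{C} b$, so $\sim_{C} = {\sim_{K}} = {\equiv_{Q}}$. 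In particular, for any pair $a,b \in P$ with $a \equiv_{Q} b$ there exist $s,t \in K$ with $a + s = b + t$.

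Next comes the key finiteness input. Since $Q$ is a quotient of the finitely generated monoid $P$ it is finitely generated, hence finitely presented by R\'edei's theorem; equivalently (applying finite presentability to all quotients of $P$), every congruence on the finitely generated commutative monoid $P$ is finitely generated. Applying this to $\equiv_{Q}$, I fix finitely many pairs $(a_{j}, b_{j}) \in P \times P$, for $j = 1, \dots, m$, such that $\equiv_{Q}$ is the smallest congruence on $P$ containing them. For each $j$, since $a_{j} \equiv_{Q} b_{j}$, the previous paragraph provides elements $s_{j}, t_{j} \in K$ with $a_{j} + s_{j} = b_{j} + t_{j}$.

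Finally I would assemble the free monoid. Let $F = \NN^{2m}$, and let $F \arr P$ send the standard generators to $s_{1}, t_{1}, \dots, s_{m}, t_{m}$; write $C' \eqdef \langle s_{1}, t_{1}, \dots, s_{m}, t_{m} \rangle \subseteq K$ for its image. Since $C' \subseteq K$ we have ${\sim_{C'}} \subseteq {\sim_{K}} = {\equiv_{Q}}$. Conversely, $\sim_{C'}$ is a congruence containing each generating pair $(a_{j}, b_{j})$, because $a_{j} + s_{j} = b_{j} + t_{j}$ with $s_{j}, t_{j} \in C'$; as $\equiv_{Q}$ is the smallest congruence containing the $(a_{j}, b_{j})$, this forces ${\equiv_{Q}} \subseteq {\sim_{C'}}$. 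Hence ${\sim_{C'}} = {\equiv_{Q}}$, so the cokernel of $F \arr P$ is $P/{\sim_{C'}} = P/{\equiv_{Q}} = Q$, as required. The only genuine obstacle is the finiteness of the congruence $\equiv_{Q}$; once R\'edei's theorem is invoked in its congruence form, the remaining identification ${\sim_{C'}} = {\equiv_{Q}}$ is a formal manipulation of the cokernel relation.
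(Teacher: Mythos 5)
Your proof is correct and takes essentially the same route as the paper: both arguments pivot on R\'edei's theorem in the form that the congruence $\equiv_{Q}$ on the finitely generated monoid $P$ is finitely generated, and both conclude by exhibiting finitely many kernel elements that witness the congruence generators and generate a submonoid inducing $\equiv_{Q}$. The only difference is presentational --- the paper stabilizes a directed union $R = \bigcup_{A} R_{A}$ over finite subsets of an arbitrary free presentation, while you pick the witnesses $s_{j}, t_{j} \in K$ directly (using the identification $\equiv_{Q} = \sim_{K}$, which the paper already records in its preliminaries on cokernels).
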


\begin{proof}
Write $P\rightarrow Q$ as the cokernel of a homomorphism $F \arr P$, where $F$ is a free monoid over a set $I$. For each finite subset $A \subseteq I$ call $F_{A} \subseteq F$ the free submonoid generated by $A$ and $C_{A}$ the cokernel of the composite $F_{A} \subseteq F \arr P$, and $R_{A} \subseteq P \times P$ the congruence equivalence relation determined by the projection $P \arr C_{A}$. The monoidal equivalence relation $R$ determined by the homomorphism $P \arr Q$ is the union of the $R_{A}$; since $R$ is finitely generated as a monoidal equivalence relation, by Rédei's theorem, there exists a finite subset $A \subseteq I$ such that $R = R_{A}$. So $Q$ is the cokernel of $F_{A} \arr P$.
\end{proof}

By passing to an étale neighborhood, we may assume that there exists a global chart $P \arr A(X)$. Consider the homomorphism $P \arr A_{x}$; this is a cokernel, hence by the Lemma there exists a finite free monoid $F$ and a homomorphism $F \arr P$ with cokernel $A_{x}$. By passing to an étale neighborhood, we may assume that the composite $F \arr P \arr A(X)$ is $0$. Call $Q$ the cokernel of $F \arr P$; we have a factorization $P \arr Q \arr A(X)$. If $K$ is the kernel of $P_{X}\sh \arr A$, then we see immediately that $Q_{X}\sh \arr A$ is the cokernel of the composite $K \subseteq P_{X}\sh \arr Q_{X}\sh$. It follows from the construction that the induced homomorphism $Q \arr A_{x}$ is an isomorphism.
\end{proof}

For later use, we note the following fact, saying that charts can be chosen compatibly with arbitrary homomorphisms of coherent sheaves of monoids.

\begin{proposition}\label{prop:compatible-charts}
Let $f\colon A \arr B$ be a homomorphism of coherent sheaves of monoids on a scheme $X$. Given a geometric point $x\colon \spec\Omega \arr X$, there exists an étale neighborhood $U \arr X$ of $x$, two finitely generated monoids $P$ and $Q$, and a commutative diagram
   \[
   \xymatrix{
   P \ar[d]\ar[r] & Q \ar[d]\\
   A(U) \ar[r]^{f(U)}    & B(U)
   }
   \]
where the columns are charts for $f|_{U}\colon A]_{U} \arr B]_{U}$. Furthermore, the columns can be chosen so that the induced homomorphisms $P \arr A_{x}$ and $Q \arr B_{x}$ are isomorphisms.
\end{proposition}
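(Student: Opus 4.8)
The plan is to construct the two vertical charts independently, using Proposition~\ref{prop:minimal-chart}, and then to build the top horizontal homomorphism $P \arr Q$ by hand, exploiting the fact that both charts are minimal, i.e.\ induce isomorphisms on the stalk at $x$. The whole difficulty is concentrated in producing this horizontal map: once it is present, commutativity and minimality are essentially formal.

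First I would apply Proposition~\ref{prop:minimal-chart} to $A$ at $x$, and then to $B$ at $x$; passing to a common \'etale neighborhood (the \'etale neighborhoods of $x$ form a cofiltered system) I obtain a single $U \arr X$ together with charts $P \arr A(U)$ and $Q \arr B(U)$ whose induced homomorphisms $P \arr A_{x}$ and $Q \arr B_{x}$ are isomorphisms. Here I use that charts restrict to smaller \'etale neighborhoods: by the stalkwise criterion of Proposition~\ref{prop:chart-stalks}, being a chart depends only on the homomorphisms $P \arr A_{y}$ at geometric points $y$, and these are unchanged under restriction. This already secures the final sentence of the statement, so the real content is the construction of a compatible $P \arr Q$.

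Next I would produce $g\colon P \arr Q$. Choose generators $p_{1}, \dots, p_{n}$ of the finitely generated monoid $P$, and let $\overline{f(p_{i})} \in B_{x}$ be the germ at $x$ of the image of $p_{i}$ under $P \arr A(U) \xarr{f} B(U)$. Since $Q \arr B_{x}$ is an isomorphism, there is a unique $q_{i} \in Q$ with image $\overline{f(p_{i})}$, and after shrinking $U$ (finitely many conditions) the two sections $f(p_{i})$ and the image of $q_{i}$ agree in $B(U)$. I then claim that $e_{i} \arrto q_{i}$ defines a homomorphism $\NN^{n} \arr Q$ that factors through the canonical surjection $\NN^{n} \arr P$: if $\sum a_{i}p_{i} = \sum b_{i}p_{i}$ holds in $P$, then transporting this relation across $P \simeq A_{x}$, applying $f_{x}\colon A_{x} \arr B_{x}$, and using that $q_{i}$ maps to $f_{x}$ of the image of $p_{i}$ in $A_{x}$ under $Q \simeq B_{x}$, one gets $\sum a_{i}q_{i} = \sum b_{i}q_{i}$ in $B_{x}$, hence in $Q$ by injectivity of $Q \arr B_{x}$. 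This yields $g$ with $g(p_{i}) = q_{i}$.

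Finally I would check commutativity and record minimality. The two composites $P \rightrightarrows B(U)$ send each generator $p_{i}$ to $f(p_{i})$ and to the image of $q_{i}$ respectively, and these agree by the shrinking step; being monoid homomorphisms out of $P$, they coincide. The minimality of the two charts gives the required isomorphisms $P \simeq A_{x}$ and $Q \simeq B_{x}$. I expect the factorization producing $g$ to be the only genuine obstacle: the assignment $p_{i} \arrto q_{i}$ on generators need not a priori respect the defining congruence of $P$, and the resolution is that compatibility must be verified not in $B(U)$ but after passage to the stalk $B_{x}$, where the isomorphisms $P \simeq A_{x}$ and $Q \simeq B_{x}$ convert relations among the $p_{i}$ into relations among the $q_{i}$ --- injectivity of $Q \arr B_{x}$ being precisely what allows those relations to descend from the stalk back to the monoid $Q$.
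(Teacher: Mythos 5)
Your proof is correct and takes essentially the same route as the paper's: minimal charts at $x$ from Proposition~\ref{prop:minimal-chart}, a horizontal map induced by $f_{x}\colon A_{x} \arr B_{x}$ through the stalk isomorphisms, and a final shrinking of $U$ so that the square, which a priori commutes only on the germs of finitely many generators of $P$, commutes on the nose. The one inessential detour is your generators-and-relations construction of $g\colon P \arr Q$: since $Q \arr B_{x}$ is an isomorphism, you can define $g$ directly as the composite $P \simeq A_{x} \xarr{f_{x}} B_{x} \simeq Q$, which is automatically a homomorphism of monoids and renders the factorization check through $\NN^{n}$ unnecessary (your check is, in effect, a by-hand verification that this composite is well defined).
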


\begin{proof}
After passing to an étale neighborhood, we may assume that there are charts $P \arr A(X)$ and $Q \arr B(X)$ such that the composites $P \arr A(X) \arr A_{x}$ and $Q \arr B(X) \arr B_{x}$ are isomorphisms, by Proposition~\ref{prop:minimal-chart}. The homomorphism $f_{x}\colon A_{x} \subseteq B_{x}$ induces a homomorphism $P \arr Q$. The diagram above does not necessarily commute: however the images of a given finite number of generators of $P$ in $B_{x}$ coincide, hence after a further restriction we may assume that it does commute.
\end{proof}

\begin{proposition}
Let $f\colon Y \arr X$ be a morphism of schemes, $A$ a coherent sheaf of monoids on $X\et$. Then the sheaf $f^{*}A$ on $Y\et$ is coherent.
\end{proposition}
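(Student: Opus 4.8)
The plan is to reduce the whole statement to a computation of stalks, combining two ingredients. The first is Proposition~\ref{prop:chart-stalks}, which says that a homomorphism from a finitely generated monoid is a chart precisely when it induces a cokernel on every stalk. The second is the standard identification of the stalks of an inverse image: for a geometric point $y\colon \spec\Omega \arr Y$ with image $x = f(y)\colon \spec\Omega \arr X$, there is a canonical isomorphism $(f^{*}A)_{y} \simeq A_{x}$. This follows directly from the description of $f^{*}$ used in Proposition~\ref{prop:pull-back-df-structure}, where $f^{-1}A(U') = \varinjlim_{U' \arr U} A(U)$: the stalk of $f^{-1}A$ at $y$ is the filtered colimit of the $A(U)$ over \'etale neighborhoods $U$ of $x$, which is $A_{x}$, and sheafification does not alter stalks.

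First I would verify that $f^{*}A$ is sharp, using that sharpness of a sheaf of monoids is equivalent to sharpness of all of its stalks. Indeed, if every $A(U)$ is sharp then each stalk, being a filtered colimit of sharp monoids, is sharp; conversely a section that is invertible is invertible in every stalk, hence zero in every stalk, hence zero since $A$ is a sheaf. As $A$ is amenable it is sharp, so every $A_{x}$ is sharp, and then every stalk $(f^{*}A)_{y} \simeq A_{f(y)}$ is sharp, whence $f^{*}A$ is sharp.

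Next I would build an atlas for $f^{*}A$ by base change. Let $\bigl(\{X_{i} \arr X\}, \{P_{i} \arr A(X_{i})\}\bigr)$ be an atlas for $A$, set $Y_{i} = X_{i} \times_{X} Y$, and let $f_{i}\colon Y_{i} \arr X_{i}$ be the projection. Then $\{Y_{i} \arr Y\}$ is an \'etale covering, and since $f^{*}$ commutes with restriction along \'etale maps one has $(f^{*}A)]_{Y_{i}} \simeq f_{i}^{*}(A]_{X_{i}})$. I claim the composite $P_{i} \arr A(X_{i}) \arr (f^{*}A)(Y_{i})$, the second map coming from the adjunction unit $A]_{X_{i}} \arr f_{i*}f_{i}^{*}(A]_{X_{i}})$, is a chart. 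By Proposition~\ref{prop:chart-stalks} it suffices to check that for each geometric point $y$ of $Y_{i}$, with image $x_{i} = f_{i}(y)$, the induced map $P_{i} \arr (f^{*}A)_{y}$ is a cokernel. Under $(f^{*}A)_{y} \simeq A_{x_{i}}$ this map is identified with $P_{i} \arr A_{x_{i}}$, and the latter is a cokernel because $P_{i} \arr A(X_{i})$ is a chart, so by Proposition~\ref{prop:chart-stalks} it induces a cokernel on every stalk of $A]_{X_{i}}$. This produces the required atlas and shows $f^{*}A$ is amenable.

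The step I expect to need the most care is the stalk isomorphism $(f^{*}A)_{y} \simeq A_{f(y)}$ and, more importantly, its compatibility both with the identification $(f^{*}A)]_{Y_{i}} \simeq f_{i}^{*}(A]_{X_{i}})$ and with the homomorphisms induced by $P_{i}$; once these naturality statements are nailed down, everything reduces to a routine invocation of Proposition~\ref{prop:chart-stalks}. A stalk-free alternative is available: the inverse image $f^{*}$ preserves cokernels and sends the constant sheaf $(P_{i})_{X_{i}}\sh$ to the constant sheaf $(P_{i})_{Y_{i}}\sh$, so it carries the cokernel $(P_{i})_{X_{i}}\sh \arr A]_{X_{i}}$ to a cokernel $(P_{i})_{Y_{i}}\sh \arr f_{i}^{*}(A]_{X_{i}})$, which is again a chart; but the stalkwise argument is cleaner and sidesteps checking directly that $f^{*}$ preserves cokernels of sheaves of monoids.
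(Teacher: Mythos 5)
Your proof is correct and follows essentially the same route as the paper: localize to a chart, compose with the pullback homomorphism, and check the chart condition stalkwise via Proposition~\ref{prop:chart-stalks} together with the identification $(f^{*}A)_{y} \simeq A_{f(y)}$. The only difference is that you spell out the sharpness of $f^{*}A$ and the compatibility of stalk identifications explicitly, details the paper leaves implicit.
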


\begin{proof}
Being coherent is a local property in the étale topology, so we may assume that there exists a chart $P \arr A(X)$. We claim that the composite $P \arr A(X) \xarr{f^{*}} f^{*}A(Y)$ is also a chart. According to Proposition~\ref{prop:chart-stalks} this can be checked at the level of stalks: but the stalk of $f^{*}A$ at a geometric point $y$ of $Y$ is the stalk of $A$ at the image of $y$, so the statement is clear.
\end{proof}

The condition of being coherent is local in the fppf topology. In fact we have the following stronger statement.

\begin{proposition}\label{prop:coherent-local}
Let $f\colon Y \arr X$ be an open surjective morphism of schemes, and let $A$ be a sheaf of monoids on $X\et$. If $f^{*}A$ is coherent as a sheaf on $Y\et$, then $A$ is coherent.
\end{proposition}

\begin{proof}
Let $x\colon \spec \Omega \arr X$ be a geometric point; after possibly extending $\Omega$, we may assume that there exists a lifting $y\colon \spec\Omega \arr Y$. After passing to an étale neighborhood of $y$ in $Y$ and replacing $X$ with its image in $X$, by Proposition~\ref{prop:minimal-chart} we may assume that there exists a chart $P \arr f^{*}A(Y)$. From the induced homomorphism $P \arr f^{*}A_{y}$ and the canonical isomorphism of stalks $A_{x} \simeq f^{*}A_{y}$ we obtain a homomorphism $P \arr A_{x}$. Since by Rédei's theorem $P$ is finitely presented, after passing to an étale neighborhood of $x$ we can assume that the homomorphism $P \arr A_{x}$ factors as $P \arr A(X) \arr A_{x}$. The composite of $P \arr A(X)$ with the canonical homomorphism $A(X) \arr f^{*}A(Y)$ does not necessarily coincide with the given chart $P \arr f^{*}A(Y)$; but since the images of the generators of $P$ in $(f^*A)_{y}$ through the two maps are the same, after passing to an étale neighborhood of $y$ in $Y$ and further shrinking $X$ we may assume that the two homomorphisms $P \arr A(Y)$ coincide.

We claim that $P \arr A(X)$ is a chart for $A$. If $K$ is the kernel of the homomorphism of presheaves of monoids $P_{X} \arr A$, we need to check that the induced homomorphism $(P_{X}/K)\sh \arr A$ is an isomorphism, or, equivalently, that for any geometric point $\xi$ of $X$ the induced homomorphism $P/K_{\xi} = (P_{X}/K)\sh_{\xi} \arr A_{\xi}$ is an isomorphism. But if $\eta$ is a geometric point of $Y$ lying over $\xi$, we have that the kernel of $P_{Y} \arr f^{*}A$ is the presheaf pullback $f^{\rmp}K$; so $K_{\xi} = f^{\rmp}K_{\eta} \subseteq P$. The induced homomorphism $P/f^{\rmp}K_{\eta} \arr f^{*}A_{\eta} \simeq A_{\xi}$ is an isomorphism, and this completes the proof.
\end{proof}

\begin{definition}
A \df structure $(A, L)$ on a scheme $X$ is \emph{coherent} if $A$ is a coherent sheaf of monoids. It is \emph{fine} if $A$ is fine (i.e., integral and coherent).
\end{definition}

Let $(A, L)$ be a \df structure on a scheme $X$, $P \arr A(X)$ a chart. The composite $L_{0}\colon P \arr A(X) \xarr{L(X)} \div X$ completely determines the \df structure.

\begin{proposition}\label{prop:chart->df}
Let $X$ be a scheme, $P$ a finitely generated monoid, $L_{0}\colon P \arr \div X$ a \sm functor. Then there exists a \df structure $(A, L)$ on $X$, together with a homomorphism of monoids $\pi\colon P \arr A(X)$ and an isomorphism $\eta$ of \sm functors between $L_{0}$ and the composite
   \[
   P \stackrel{\pi}{\larr} A(X)
      \xarr{L(X)} \div X
   \]
such that $P \arr A(X)$ is a chart. If $K$ is the kernel of the \sm functor $P_{X} \arr \div_{X\et}$, then $A$ is isomorphic to $(P_{X}/K)\sh$.

Furthermore, this is universal among such homomorphisms.

More precisely, given $(A', L')$ another \df structure on $X$, a homomorphism of monoids $\pi'\colon P \arr A'(X)$, and an isomorphism $\eta'$ of $L_{0}$ with the composite $P \xarr{\pi'} A'(X) \xarr{L'(X)} \div X$, there exists a unique morphism $(\phi, \Phi)\colon (A, L) \arr (A', L')$ such that $\phi(X)\circ\pi =\pi'$ and the diagram
   \[
   \xymatrix{
   L_{0} \ar[r]^-{\eta} \ar[rd]^-{\eta'} 
   & L(X) \circ \pi \ar[d]\\
   &L'(X) \circ \pi'
   }
   \]
of \sm functors $P \arr \div X$, where the vertical arrow is the homomorphism induced by $\Phi_{X}\colon L(X) \arr L'(X)\circ \phi(X)$, commutes.

We call $(A, L)$ \emph{the \df structure associated with $L_0$}.
\end{proposition}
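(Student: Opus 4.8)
The plan is to recognize the datum $L_0\colon P \arr \div X$ as a \pdf structure on the constant presheaf $P_X$, and then to read off everything from the universal \df structure produced by Proposition~\ref{prop:pdf-to-df}. First I would promote $L_0$ to a cartesian \sm functor $P_X \arr \div_{X\et}$: since $P_X(U) = P$ for every $U \arr X$ in $X\et$, with identity restriction maps, I set the value at $a \in P_X(U)$ to be the pullback to $U$ of $L_0(a) \in \div X$, which is cartesian and \sm by construction. Thus $(P_X, L_0)$ is a \pdf structure, and I define $(A, L) \eqdef (P_X\ass, L_0\ass)$ together with the universal morphism $u\colon (P_X, L_0) \arr (A, L)$ furnished by Proposition~\ref{prop:pdf-to-df}. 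Evaluating $u$ over the final object $X$ of $X\et$ then yields the homomorphism $\pi\colon P = P_X(X) \arr A(X)$ and the isomorphism $\eta\colon L_0 \simeq L(X)\circ\pi$.

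Two of the asserted properties are immediate from Proposition~\ref{prop:pdf-to-df}: the pair $(A, L)$ is a \df structure (so $L$ has trivial kernel), and $A$ is the sheafification $(P_X/K)\sh$ of the presheaf quotient by the kernel $K$ of $L_0\colon P_X \arr \div_{X\et}$, which is exactly the identification in the statement. To see that $\pi$ is a chart I would invoke the stalk criterion of Proposition~\ref{prop:chart-stalks}: since sheafification does not alter stalks, for each geometric point $x\colon \spec\Omega \arr X$ the stalk $A_x$ is $(P_X/K)_x = P/K_x$, and the induced map $P \arr A_x$ is precisely the quotient homomorphism, hence a cokernel. As $P$ is finitely generated by hypothesis, this shows that $P \arr A(X)$ is a chart.

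For the universal property, the key observation is that a morphism of \pdf structures $(P_X, L_0) \arr (A', L')$ is the same datum as a pair $(\pi', \eta')$ as in the statement. Indeed, a homomorphism of presheaves $P_X \arr A'$ corresponds to a homomorphism $P \arr A'(X)$, and---because $X$ is the final object of $X\et$ and both $L_0$ and the relevant composite are cartesian---a base-preserving morphism $\Phi'$ of the associated \sm functors is determined by, and can be reconstructed from, its single component $\eta'$ over $X$ by pullback. Granting this dictionary, the pair $(\pi', \eta')$ attached to any \df structure $(A', L')$ corresponds to a morphism $(P_X, L_0) \arr (A', L')$, which by the universal property of $u$ factors uniquely through a morphism $(\phi, \Phi)\colon (A, L) \arr (A', L')$; unwinding the conditions $\phi(X)\circ\pi = \pi'$ and the compatibility of $\Phi_X$ with $\eta$ and $\eta'$ reproduces exactly the commuting triangle of the statement, while uniqueness is inherited from Proposition~\ref{prop:pdf-to-df}. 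I expect the only point genuinely requiring care to be this last identification of \pdf morphisms out of $(P_X, L_0)$ with the pairs $(\pi', \eta')$---that is, verifying that the $X$-component $\eta'$ really does extend to a base-preserving natural transformation of cartesian functors on all of $X\et$; everything else is a formal consequence of the universal properties already established in Propositions~\ref{prop:factor-predf} and~\ref{prop:sheafification}.
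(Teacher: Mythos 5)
Your proof is correct and takes essentially the same route as the paper, whose entire proof of this proposition reads ``This is a direct consequence of Proposition~\ref{prop:pdf-to-df}.'' What you have done is make that derivation explicit---promoting $L_0$ to a \pdf structure on the constant presheaf $P_X$, verifying the chart condition stalkwise via Proposition~\ref{prop:chart-stalks}, and checking the dictionary between \pdf morphisms out of $(P_X,L_0)$ and pairs $(\pi',\eta')$ (using that $X$ is final in $X\et$ and the functors are cartesian)---all of which the paper leaves implicit, and all of which you carry out correctly.
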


\begin{proof}
This is a direct consequence of Proposition \ref{prop:pdf-to-df}.
\end{proof}

\begin{example}\label{def:generated-df}
Let $X$ be a scheme, $(L_{1}, s_{1})$, \dots,~$(L_{r}, s_{r})$ objects of $\div X$. Consider the monoidal functor $L_{0}\colon \NN^{r} \arr \div X$ that sends $(k_{1}, \dots, k_{r}) \in \NN^{r}$ into
   \[
   (L_{1}, s_{1})^{\otimes k_{1}}
   \otimes \dots\otimes (L_{r}, s_{r})^{\otimes k_{r}}\,.
   \]
We call the \df structure associated with $L_{0}$ \emph{the \df structure generated by $(L_{1}, s_{1})$, \dots, $(L_{r}, s_{r})$}.

Denote by $e_{i} \in \NN^{r}$ the $i\th$ canonical basis vector, that is, the vector with $1$ at the $i\th$ place and $0$ everywhere else. Suppose that $L'_{0}\colon \NN^{r} \arr \div X$ is another \sm functor, and for each $i = 1$, \dots,~$r$ we have an isomorphism $\phi_i\colon L'_{0}(e_{i}) \simeq (L_{i}, s_{i})$; then it is easy to show that there exists a unique isomorphism $\phi\colon  L'_{0} \simeq L_{0}$ whose value at $e_{i}$ is $\phi_{i}$. By Proposition~\ref{prop:chart->df}, this implies that the \df structure $(A,L)$ generated by $(L_{1}, s_{1})$, \dots,~$(L_{r}, s_{r})$ is, up to a unique isomorphism, the only \df structure with a chart $\NN^{r} \arr A(X)$, such that the composite $\NN^{r} \arr A(X) \xarr{L(X)} \div (X)$ sends $e_{i}$ to $(L_{i}, s_{i})$.
\end{example}

\subsection{Charts and Kato charts}\label{subsec:kato-charts}

Our notion of chart can be compared with Kato's.

\begin{definition}[\cite{Kato}]
A \emph{Kato chart} for the log structure $(M,\rho)$ is the data of a finitely generated monoid $P$, and a morphism $P\rightarrow M(X)$, such that the composite $P \arr M(X) \arr \overline{M}(X)$ is a chart for $\overline{M}$.
\end{definition}

\begin{definition}[{\cite[Definition~2.1.1]{ogus-logbook}}]
A log structure admitting a chart locally on $X\et$ is called \emph{coherent}. A log structure is \emph{fine} if it is coherent and integral.
\end{definition}

If we are given a finitely generated monoid $P$ and a homomorphism $P \arr \cO(X)$, we compose with the morphism $\cO(X) \arr \div X$ defined in the proof of Theorem~\ref{thm:df<->log}, sending $f \in \cO(U)$ into $(\cO_{U}, f)$, we obtain a \sm functor $P \arr \div X$, which, according to Proposition~\ref{prop:chart->df}, gives us a \df structure $(A, L)$ on $X$. Call $(M, \rho)$ the associated log structure: the homomorphism $P \arr A(X)$, together with $P \arr \cO(X)$, yields a Kato chart $P \arr M(X)$ (recall, from the proof of Theorem~\ref{thm:df<->log}, that $M \eqdef A\times_{\div_{X\et}}\cO_{X}$). So the log structure associated with a \df structure $(A, L)$ is coherent if and only if there exists an étale cover $\{X_{i} \arr X\}$ and charts $\{P_{i} \arr A(X_{i})\}$, such that the composites $P_{i} \arr A(X_{i}) \xarr{L} \div X_{i}$ lift to homomorphisms $P_{i} \arr \cO(X_{i})$.

Now we want to investigate the question of when a chart $P \arr A(X)$ for a \df structure $(A, L)$ on $X$ lifts to a Kato chart $P \arr M(X)$. In other words, when does a \sm functor $P \arr \div X$ lift to a homomorphism of monoids $P \arr \cO(X)$?

Fix a finitely generated monoid $P$. We denote by $\ZZ[P]$ the monoid ring of $P$. Since we are using additive notation for $P$, it is convenient to introduce an indeterminate $x$, and write an element of $\ZZ[P]$ as a finite sum $\sum_{p \in P}a_{p}x^{p}$, where $a_{p} \in \ZZ$ for all $p$.

A morphism $P \arr \cO(X)$ correspond to a ring homomorphism $\ZZ[P] \arr \cO(X)$, hence to a morphism of schemes $X \arr \sz{P}$. Thus we think of $\sz{P}$ as representing the functor $\underhom(P, \AA^{1})$ from schemes to monoids (the monoidal structure is given by multiplication in $\AA^{1}$). Thus $\sz{P}$ is the space of Kato charts.

Consider the fibered category $\underhom(P, \div_{\ZZ}) \arr \cat{Sch}$, whose objects over a scheme $X$ are \sm functors $P \arr \div X$. A morphism $X \arr \underhom(P, \div_{\ZZ})$ gives a chart for a \df structure on $X$. We think of $\underhom(P, \div_{\ZZ})$ as the stack of charts. There is an obvious morphism
   \[
   \underhom(P, \AA^{1}) \arr \underhom(P, \div_{\ZZ})
   \]
that corresponds to the procedure of associating a chart to a Kato chart. In other words, if a morphism $X \arr \underhom(P, \AA^{1})$ corresponds to a homomorphism $P \arr \cO(X)$, the corresponding morphism $X \arr \underhom(P, \div_{\ZZ})$ corresponds to the composite $P \arr \cO(X) \arr \div X$. The issue is: when is it possible to lift a \sm functor $X \arr \underhom(P,\div_{\ZZ})$ to a \sm functor $X \arr \sz{P}$?

Set
   \[
   \widehat{P} \eqdef \underhom(P, \gm) = \underhom(P\gp, \gm)\,;
   \]
then $\widehat{P}$ is a diagonalizable group scheme, acting on $\sz{P}$ (the action is induced by the action of $\gm$ on $\AA^{1}$ by multiplication). Equivalently, we can think of $\widehat{P}$ as the group scheme of invertible elements of $\sz{P}$.

Since the group scheme $\widehat{P}$ is diagonalizable, with character group
   \[
   P\gp \simeq \hom(\widehat{P}, \gm)\,,
   \]
by the standard description of representations of $\widehat{P}$, a $\widehat{P}$-torsor $\eta\colon E \arr T$ gives a $P\gp$-grading on the sheaf of algebras $\eta_{*}\cO_{E}$. The trivial torsor $\widehat{P}\times T \arr T$ corresponds to the group algebra $\cO_{T}[P\gp]$. This gives an equivalence of categories between the category of $\widehat{P}$-torsors and the opposite of the groupoid of sheaves of $P\gp$-graded algebras over $\cO_{T}$, such that each graded summand is invertible. Given such an algebra $A$, the torsor $E$ is the relative spectrum $\underspec_{T}A$; the action is defined by the grading.

 The action of $\widehat{P}$ on $\spec\ZZ[P]$ corresponds to the natural $P\gp$-grading
   \[
   \ZZ[P]
   = \bigoplus_{u \in P\gp}\Bigl(\bigoplus_{\substack{p \in P\\\iota_{P}(p) = u}}
   \ZZ x^{p}\Bigr)\,.
   \]
A morphism $T \arr \szp{P}$ corresponds to a $\widehat{P}$-torsor $\eta\colon E \arr T$ and a $\widehat{P}$-equivariant morphism $E \arr \sz{P}$; this morphism gives $\eta_{*}\cO_{E}$ the structure of a $\cO_{T}[P]$ algebra, which is compatible with the $P\gp$-grading of $\ZZ[P]$. Hence, we see that the groupoid of objects of $\szp{P}$ over $T$ is equivalent to the opposite of the groupoid whose objects are sheaf of commutative $P\gp$-graded $\cO_{T}[P]$-algebras over $T$, whose grading is compatible with the grading of  $\cO_{T}[P]$, that are fppf locally isomorphic to $\cO_{T}[P\gp]$ as graded $\cO_{T}$-algebras.

\begin{proposition}\label{prop:stack-of-charts}
Let $P$ be a finitely generated monoid. There is an equivalence of fibered categories between $\underhom(P, \div_{\ZZ})$ with the quotient stack $\szp{P}$ by the action defined above.
\end{proposition}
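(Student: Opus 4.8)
I would prove the equivalence fibrewise over a test scheme $T$, matching both sides with the groupoid of graded algebras that was set up in the paragraphs preceding the statement. Recall from there that an object of $\szp{P}$ over $T$ is, up to passing to the opposite groupoid, a sheaf of commutative $P\gp$-graded $\cO_{T}[P]$-algebras $\cA=\bigoplus_{u\in P\gp}\cA_{u}$, with grading compatible with that of $\cO_{T}[P]$, which is fppf-locally isomorphic to $\cO_{T}[P\gp]$; in particular each $\cA_{u}$ is invertible and each multiplication $\cA_{u}\otimes\cA_{v}\arr\cA_{u+v}$ is an isomorphism. On the other side, by Definition~\ref{def:symmetric-monoidal-functor} and Remark~\ref{rmk:conventions-symmetric-monoidal} an object of $\underhom(P,\div_{\ZZ})$ over $T$ is a \sm functor $L\colon P\arr\div T$, that is, the datum of line bundles with sections $L(p)=(L_{p},\sigma^{L}_{p})$ together with the coherence isomorphisms $\mu^{L}_{p,q}\colon L_{p}\otimes L_{q}\simeq L_{p+q}$ and $\epsilon^{L}$, where each $\mu^{L}_{p,q}$ carries $\sigma^{L}_{p}\otimes\sigma^{L}_{q}$ to $\sigma^{L}_{p+q}$; morphisms are monoidal isomorphisms preserving sections, so the fibre is a groupoid. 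Both fibres being groupoids is what will let me ignore the ``opposite'' in the dictionary.

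\textbf{From charts to graded algebras.} Composing $L$ with the functor $\div T\arr\cB\gm(T)$ that forgets sections yields a \sm functor $\overline{L}\colon P\arr\cB\gm(T)$ into the groupoid of invertible sheaves. Since every object of $\cB\gm(T)$ is invertible, I claim $\overline{L}$ extends, uniquely up to unique monoidal isomorphism, to a \sm functor $\overline{L}\gp\colon P\gp\arr\cB\gm(T)$ with $\overline{L}\gp(\iota_{P}(p)-\iota_{P}(q))\simeq L_{p}\otimes L_{q}^{\vee}$. Granting this, I set $\cA:=\bigoplus_{u\in P\gp}\overline{L}\gp(u)$ with the multiplication induced by the monoidal structure of $\overline{L}\gp$; this is a $P\gp$-graded algebra with invertible summands whose multiplication maps are isomorphisms, hence (after trivializing the line bundles of a finite generating set of $P\gp$) Zariski-locally, and a fortiori fppf-locally, isomorphic to $\cO_{T}[P\gp]$. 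Finally the sections furnish the $\cO_{T}[P]$-algebra structure: the rule $x^{p}\mapsto\sigma^{L}_{p}\in\overline{L}\gp(\iota_{P}(p))(T)=\cA_{\iota_{P}(p)}(T)$ is graded by construction and multiplicative precisely because $\mu^{L}_{p,q}(\sigma^{L}_{p}\otimes\sigma^{L}_{q})=\sigma^{L}_{p+q}$. This assigns to $L$ an object of the graded-algebra groupoid, naturally in $L$.

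\textbf{The quasi-inverse, and assembling the equivalence.} Conversely, given such an algebra $\cA$ I would read off a chart by setting $L_{p}:=\cA_{\iota_{P}(p)}$, letting $\sigma^{L}_{p}$ be the image of the monomial $x^{p}$ under $\cO_{T}[P]\arr\cA$ (a section of $\cA_{\iota_{P}(p)}$ since the map is graded), and taking $\mu^{L}_{p,q}$ to be the multiplication $\cA_{\iota_{P}(p)}\otimes\cA_{\iota_{P}(q)}\arr\cA_{\iota_{P}(p+q)}$, which is an isomorphism by local triviality and sends $\sigma^{L}_{p}\otimes\sigma^{L}_{q}$ to $\sigma^{L}_{p+q}$ because $x^{p}x^{q}=x^{p+q}$; the associativity, commutativity and unit constraints for $L$ are forced by the algebra axioms. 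These two constructions are visibly mutually quasi-inverse and compatible with base change, so they give an equivalence between $\underhom(P,\div_{\ZZ})(T)$ and the graded-algebra groupoid, cartesian in $T$. Composing with the (contravariant) dictionary between these algebras and $\widehat{P}$-torsors recalled above — under which $\cA$ corresponds to the torsor $\underspec_{T}\cA$ with its equivariant map to $\sz{P}$ — and using that all the fibres are groupoids, so that the passage to the opposite is absorbed by inverting morphisms, I obtain the desired equivalence of fibered categories $\underhom(P,\div_{\ZZ})\simeq\szp{P}$.

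\textbf{Main obstacle.} The only genuinely non-formal input is the group-completion claim, namely that a \sm functor from $P$ into the commutative Picard groupoid $\cB\gm(T)$ extends uniquely up to unique monoidal isomorphism along $P\arr P\gp$; I would isolate this as a lemma and prove it by the usual cofinality argument on the diagram computing $P\gp$, checking that the coherence data of $\overline{L}$ force the extension and its uniqueness. The remaining difficulty is bookkeeping: reconciling the torsor-triviality condition ``fppf-locally $\cO_{T}[P\gp]$'' with the condition that the structure isomorphisms $\mu^{L}_{p,q}$ be invertible, and keeping the grading conventions and the contravariance of $\underspec$ consistent so that sections match homogeneous elements of the correct degree. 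Once these are settled, the verification that the two functors are quasi-inverse reduces to the routine monoidal coherence checks, which I would leave to the reader.
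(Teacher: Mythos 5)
Your overall route coincides with the paper's (fibrewise dictionary through $P\gp$-graded algebras, group completion of the \sm functor, sections giving the $\cO_{T}[P]$-structure), but there is one genuine gap, and it is exactly at the point you wave through: the claim that $\cA=\bigoplus_{u\in P\gp}\overline{L}\gp(u)$ is \emph{Zariski-locally} isomorphic to $\cO_{T}[P\gp]$ ``after trivializing the line bundles of a finite generating set of $P\gp$.'' This is false whenever $P\gp$ has torsion, and it is the sole reason the fppf topology appears in the statement at all. If $\gamma\in P\gp$ has order $n$, the monoidal coherence data give a structural isomorphism $\cA_{\gamma}^{\otimes n}\simeq\cA_{0}\simeq\cO_{T}$, and a trivialization of $\cA_{\gamma}$ as a line bundle yields a graded \emph{algebra} isomorphism with $\cO_{T}[P\gp]$ only if the chosen nowhere-vanishing section $s$ satisfies $s^{\otimes n}=1$ under this isomorphism; an arbitrary $s$ satisfies $s^{\otimes n}=u$ for some unit $u$, and correcting $s$ requires extracting an $n$\dash th root of $u$, which is an fppf cover and in general not a Zariski (nor even \'etale) one. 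Concretely, take $T=\spec\QQ$ and $P\gp=\ZZ/2\ZZ$: the graded algebra $\QQ[x]/(x^{2}-2)$ has invertible summands and invertible multiplication maps, all line bundles are already trivial, yet it is not graded-isomorphic to $\QQ[\ZZ/2\ZZ]=\QQ[x]/(x^{2}-1)$; and over $\FF_{p}(t)$ with $n=p$ and $u=t$ even \'etale covers do not suffice. Your ``Main obstacle'' paragraph files this under bookkeeping (``reconciling the torsor-triviality condition\dots''), but it is the one non-formal verification in the whole proof.

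The paper's proof isolates precisely this point as Lemma~\ref{lem:locally-trivial}: decompose $P\gp$ (there called $G$) into cyclic factors, handle infinite cyclic factors by Zariski-local trivialization exactly as you do, and handle a finite cyclic factor of order $n$ by passing to an fppf cover on which a section $s$ of $L_{\gamma}$ with $s^{\otimes n}=1$ exists, checking that the resulting trivializations of all $L_{\gamma^{k}}$ are consistent. Your argument would be repaired by replacing your parenthetical claim with this lemma (your treatment of the torsion-free part is fine). The remaining ingredients of your proposal --- the extension of $\overline{L}$ along $P\arr P\gp$ (which the paper does by the explicit formula $L\gp_{\iota_{P}a-\iota_{P}b}\eqdef L_{a}\otimes L_{b}^{\vee}$, pointing to Proposition~\ref{prop:universal-to-pic-stack} for the general construction, rather than by a cofinality argument), the rule $x^{p}\mapsto\sigma^{L}_{p}$, and the quasi-inverse $\cA\mapsto\bigl(p\mapsto(\cA_{\iota_{P}(p)},x^{p})\bigr)$ --- all match the paper's proof.
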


\begin{proof}
Let $T$ be a scheme; suppose that we are given an object of $\szp{P}$ over $T$, corresponding to a sheaf $A$ of $\cO_{T}[P]$-algebras, as above. With this, we associate a \sm functor $P \arr \div T$ as follows. Write $A = \oplus_{u \in P\gp}A_{u}$; then by the local description of $A$ we see that $A_{u}$ is an invertible sheaf on $T$. The functor $P \arr \div T$ associates with $p \in P$ the pair $(A_{\iota_{P}(p)}, x^{p})$, where by abuse of notation we identify the element $x^{p} \in \ZZ[P]$ with its image in $A(T)$. The \sm structure on the functor is given by the algebra structure on $A$; we leave the easy details to the reader.

For the inverse construction, we need the following. Suppose that $G$ is a finitely generated abelian group, $L\colon G \arr \pic T$ a \sm functor. With this we can associate a $G$-graded sheaf of $\cO_{T}$-modules
   \[
   A^{L} \eqdef \bigoplus_{g \in G} L_{g}.
   \]
It is easy to see that the isomorphisms $L_{g} \otimes_{\cO_{T}} L_{h} \arr L_{g+h}$ coming from the \sm structure of $L$ give $A^{L}$ the structure of a sheaf of commutative $G$-graded algebras.

The trivial \sm functor $G \arr \div T$ defines the sheaf of group algebras $\cO_{T}[G]$.

\begin{lemma}\label{lem:locally-trivial}
Locally in the fppf topology, the sheaf of commutative $G$-graded algebras associated with a \sm monoidal functor $G \arr \div T$ is isomorphic to $\cO_{T}[G]$.
\end{lemma}

\begin{proof}
Let us decompose $G$ as a product $G_{1} \times \dots \times G_{r}$ of cyclic groups. If we denote by $L^{i}$ the restriction of $L$ to $G_{i} \subseteq G$, it is immediate to see that if $g_{i} \in G_{i}$ for all $i$, then
$L_{g_{1} \dots g_{r}} \simeq L_{g_{1}} \otimes_{\cO_{T}} \dots \otimes_{\cO_{T}} L_{g_{r}}$, and that this induces an isomorphism of sheaves of $\cO_{T}$-algebras
   \[
   A^{L} \simeq A^{L^{1}} \otimes_{\cO_{T}} \dots \otimes_{\cO_{T}} A^{L^{r}}\,;
   \]
so we may assume that $G$ is cyclic.

Call $\gamma$ a generator of $G$. If $G$ is infinite, after restricting $T$ in the Zariski topology we may assume that there exists an isomorphism $L_{\gamma} \simeq \cO_{T}$. The monoidal structure of $L$ gives an isomorphism $L_{\gamma^{k}} \simeq L_{\gamma}^{\otimes k}$ for any element $\gamma^{k}$ in $G$; this induces an isomorphism of sheaved of $G$-graded modules $A^{L} \simeq \bigoplus_{g \in G}\cO_{T} = \cO_{T}[G]$, which is immediately seen to be an isomorphism of $\cO_{T}$-algebras.

If $G$ has order $n$, then the \sm structure of $L$ gives isomorphism
   \[
   \cO_{T} \simeq L_{1} = L_{\gamma^{n}} \simeq L_{\gamma}^{\otimes n}\,;
   \]
after passing to a fppf cover of $T$, we may assume that there exists a nowhere vanishing section $s$ of $L_{\gamma}$ such that $s^{\otimes n}$ corresponds to $1 \in \cO_{T}$; this defines an isomorphism $\cO_{T} \simeq L_{\gamma}$, sending $1$ to $s$. If $\gamma^{k} \in G$, we obtain an isomorphism
   \[
   \cO_{T}\simeq L_{\gamma}^{k} \simeq L_{\gamma^{k}}\,;
   \]
the condition on $s$ ensures that this is independent of $k$. As in the previous case, this defines an isomorphism $A^{L} \simeq \cO_{T}[G]$, which is easily seen to be an isomorphism of algebras.
\end{proof}

We claim that the functor from $\szp{P}(T) \arr \hom(P, \div T)$ constructed above is an equivalence. Given a \sm  functor $L\colon P \arr \div T$ we first define a \sm functor $L\gp\colon P \arr \pic T$ by the obvious formula\footnote{See Proposition \ref{prop:universal-to-pic-stack} for a more general construction.} $L\gp_{\iota_{P} a-\iota_{P} b}\eqdef L_a\otimes L_b^\vee$, and then construct a sheaf of algebras $A \eqdef \bigoplus_{u \in P\gp} L\gp_{u}$. We define a structure of $\cO_{T}[P]$-algebra on $A$ by sending, for each $p \in \cO[P](T)$ the element $x^{p}$ into the tautological section $\sigma^{L}_{p} \in L_{p} \simeq L\gp_{\iota_{P}p}$. We need to check that $A$ gives an object of $\szp{P}$ over $T$; once this is done, it is straightforward to verify that this construction gives a quasi-inverse to the functor defined above. In doing so the only difficulty is to show that $A$ is fppf locally isomorphic to $\cO_{T}[P\gp]$; and this is the content of the Lemma~\ref{lem:locally-trivial}. This concludes the proof of the Proposition.
\end{proof}

\begin{corollary}\label{cor:chartdf<->chartkato}
Let $(A, L)$ be \df structure on a scheme $X$, and call $(M,\rho)$ the induced log structure. Then $(A,L)$ is coherent if and only if there exists a fppf cover $X' \arr X$ such that the pullback of $(M,\rho)$ to $X'$ is coherent.
\end{corollary}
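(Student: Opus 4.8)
The plan is to translate both conditions into statements about the sheaf of monoids $A \simeq \overline{M}$ and about lifting charts to Kato charts, and then to invoke the fppf-descent of amenability together with the description of the stack of charts. Throughout I use that, under the equivalence of Theorem~\ref{thm:df<->log}, the \df structure $(A,L)$ corresponds to $(M,\rho)$ with $A \simeq \overline{M}$, that pullback of log structures corresponds to pullback of \df structures, and the criterion recorded immediately before the statement: the log structure attached to a \df structure is amenable precisely when, \'etale-locally, there are charts $P \arr A$ whose composite $P \arr A \xarr{L} \div$ lifts to a homomorphism $P \arr \cO$. I also use that an fppf cover is open and surjective (flatness plus finite presentation gives openness), so that Proposition~\ref{prop:amenable-local} applies to it.

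For the implication ($\Rightarrow$), suppose $(A,L)$ is amenable, so $A$ has an atlas $\{X_i \arr X\}$ with charts $P_i \arr A(X_i)$. The composite $P_i \arr A(X_i) \xarr{L} \div X_i$ is a \sm functor, i.e.\ a morphism $X_i \arr \underhom(P_i,\div_{\ZZ})$, which by Proposition~\ref{prop:stack-of-charts} is the same as a morphism $X_i \arr \szp{P_i}$. Such a morphism is classified by a $\widehat{P_i}$-torsor $E_i \arr X_i$ together with an equivariant map $E_i \arr \sz{P_i}$. Since $P_i$ is finitely generated, $\widehat{P_i}$ is a diagonalizable group scheme of finite type, hence flat, affine and finitely presented, so $E_i \arr X_i$ is an fppf cover; over it the chart lifts to the Kato chart $P_i \arr \cO(E_i)$ given by the equivariant map to $\sz{P_i} = \underhom(P_i,\AA^1)$. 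Setting $X' = \coprod_i E_i$, which is an fppf cover of $X$, the pulled-back charts $P_i \arr f^{*}A$ on $E_i$ have their composites to $\div E_i$ lifting to $\cO(E_i)$, so by the cited criterion the pullback $f^{*}(M,\rho)$ is amenable.

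For the implication ($\Leftarrow$), let $f\colon X' \arr X$ be an fppf cover with $f^{*}(M,\rho)$ amenable. Amenability of a log structure means the existence, locally on $X'\et$, of a Kato chart $P \arr f^{*}M$; by definition the composite $P \arr \overline{f^{*}M}$ is then a chart, so $\overline{f^{*}M}$ is amenable as a sheaf of monoids (it is sharp, being the characteristic of a log structure, and has the required atlas). Since $\overline{f^{*}M} \simeq f^{*}\overline{M} \simeq f^{*}A$, the sheaf $f^{*}A$ is amenable on $X'\et$. As $f$ is open and surjective, Proposition~\ref{prop:amenable-local} yields that $A$ is amenable, that is, $(A,L)$ is amenable.

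I expect the main obstacle to be the forward direction, and specifically the passage from a chart of the \df structure to a Kato chart after an fppf base change. The content there is the identification $\underhom(P,\div_{\ZZ}) \simeq \szp{P}$ of Proposition~\ref{prop:stack-of-charts}, together with the observation that the quotient presentation $\sz{P} \arr \szp{P}$ is the universal $\widehat{P}$-torsor and hence an fppf cover over which every chart acquires a lift to a Kato chart; one must also check that the pulled-back chart is genuinely a chart for $f^{*}A$, which follows from the stalk characterization of charts (Proposition~\ref{prop:chart-stalks}), since pullback preserves stalks.
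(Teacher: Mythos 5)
Your proof is correct and follows essentially the same route as the paper: the backward direction is the same application of Proposition~\ref{prop:amenable-local} after identifying $f^{*}\overline{M}$ with $f^{*}A$, and in the forward direction your torsor $E_{i}$ is exactly the paper's fibered product $X_{i} \times_{\underhom(P_{i}, \div_{\ZZ})} \sz{P_{i}}$ obtained via Proposition~\ref{prop:stack-of-charts}, over which the chart lifts to a Kato chart. The extra checks you supply (openness of an fppf cover, the stalkwise verification that the pulled-back chart is a chart, sharpness of $f^{*}A$) are points the paper leaves implicit, not a deviation in method.
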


\begin{proof}
Assume that $(M, \rho)$ becomes coherent after pulling to a fppf cover $f\colon X' \arr X$. Then $f^{*}\overline{M} = f^{*}A$ is coherent, and, by Proposition~\ref{prop:coherent-local}, $(A, L)$ is coherent.

On the other hand, if $(A, L)$ is coherent, pick an étale cover $\{X_{i} \arr X\}$ and charts $\{P_{i} \arr A(X_{i})\}$. Consider the induced morphisms $X_{i} \arr \underhom(P_{i}, \div_{\ZZ})$. The pullback
   \[
   X_{i}' \eqdef X_{i} \times_{\underhom(P_{i}, \div_{\ZZ})} \sz{P_{i}}
   \]
is a $\widehat{P}_{i}$-torsor over $X_{i}$, and the pullback of $(M, \rho)$ to $X_{i}'$ is coherent. We conclude the proof by setting $X' \eqdef\sqcup_{i}X_{i}'$.
\end{proof}

But in fact we can do better.

\begin{proposition}\label{prop:coherent<->coherent}
Let $(A, L)$ be \df structure on a scheme $X$, and call $(M,\rho)$ the induced log structure. Then $(A,L)$ is coherent if and only if $(M,\rho)$ is coherent.
\end{proposition}

From this and from Corollary~\ref{cor:chartdf<->chartkato} we obtain the following, which seems to be new.

\begin{corollary}
Let $(M,\rho)$ be a logarithmic structure on a scheme $X$. If there exists a fppf cover $X' \arr X$ such that the pullback of $(M,\rho)$ to $X'$ is coherent, then $(M, \rho)$ is coherent.
\end{corollary}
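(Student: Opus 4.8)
The plan is to reduce the statement to the two results that do the real work, Corollary~\ref{cor:chartdf<->chartkato} and Proposition~\ref{prop:amenable<->amenable}, both of which are phrased for a log structure arising as the one induced by a \df structure, that is (by Theorem~\ref{thm:df<->log}) for a quasi-integral log structure. The only thing separating the general hypothesis from this setting is quasi-integrality of $(M,\rho)$, so I would split the proof into two parts: first show that the hypothesis forces $(M,\rho)$ to be quasi-integral, and then run the formal chain of equivalences.

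For the first part, I would show that an amenable log structure is automatically quasi-integral, and apply this to the pullback $f^{*}(M,\rho)$. Working at a geometric point $\eta'$ of $X'$, a minimal Kato chart (obtained by refining an arbitrary local Kato chart exactly as in the proof of Proposition~\ref{prop:minimal-chart}, the refinement lifting to a Kato chart after shrinking) realizes the stalk as the log structure associated with a homomorphism $P \arr \cO$ whose monoid $P$ is sharp and maps isomorphically to $\overline{M}_{\eta'}$. For such a chart the preimage of $\cO^{*}$ in $P$ is reduced to $0$, so the associated log structure is the direct sum $\cO^{*}\oplus P$, on which $\cO^{*}$ acts freely; hence $f^{*}(M,\rho)$ is quasi-integral. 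Quasi-integrality then descends along the surjection $f$ by a one-line stalk computation: given a geometric point $\xi$ of $X$, lift it to a point $\eta'$ over $\xi$ (possible since $f$ is surjective, after possibly enlarging the residue field), and if $u + m = m$ in $M_{\xi}$ with $u \in \cO^{*}_{X,\xi}$, push forward to $(f^{*}M)_{\eta'}$; freeness there, together with the injectivity of the local homomorphism $\cO^{*}_{X,\xi}\arr \cO^{*}_{X',\eta'}$, forces $u = 0$.

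With quasi-integrality in hand, Theorem~\ref{thm:df<->log} lets me write $(M,\rho)$ as the log structure induced by a \df structure $(A,L)$ with $A = \overline{M}$. The hypothesis that the pullback of $(M,\rho)$ to the fppf cover $X'$ is amenable is now exactly the right-hand condition of Corollary~\ref{cor:chartdf<->chartkato}, which therefore yields that $(A,L)$ is amenable; Proposition~\ref{prop:amenable<->amenable} then gives that $(M,\rho)$ itself is amenable, as desired. (Equivalently, one can bypass the detour through the sheaf $\overline{M}$: Corollary~\ref{cor:chartdf<->chartkato} already produces an amenable $(A,L)$ directly, after which Proposition~\ref{prop:amenable<->amenable} finishes.)

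I expect the only genuine obstacle to be the first part, namely confirming that amenability forces quasi-integrality --- more precisely, that an amenable log structure admits minimal sharp Kato charts locally, so that the preimage of the units in the chart monoid is trivial and the associated log structure splits off $\cO^{*}$ as a free factor. Once this is secured, the descent of quasi-integrality along $f$ and the concluding chain of equivalences are purely formal, since all the substantive content is already packaged in the two quoted results (and in Proposition~\ref{prop:amenable-local}, on which Corollary~\ref{cor:chartdf<->chartkato} rests). If one prefers to sidestep this point entirely, the corollary may be read within the quasi-integral setting of Theorem~\ref{thm:df<->log}, in which case the proof collapses to the single sentence combining Corollary~\ref{cor:chartdf<->chartkato} and Proposition~\ref{prop:amenable<->amenable}.
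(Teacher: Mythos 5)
Your closing remark---that in the quasi-integral setting the proof collapses to the one-sentence combination of Corollary~\ref{cor:chartdf<->chartkato} and Proposition~\ref{prop:amenable<->amenable}---is in fact exactly the paper's proof: the corollary is stated as an immediate consequence of those two results, implicitly for log structures induced by \df structures, i.e.\ quasi-integral ones (Theorem~\ref{thm:df<->log}). Your descent of quasi-integrality along the fppf cover (step 2) is also fine. The genuine gap is your first step: amenability does \emph{not} imply quasi-integrality, under either the paper's definition of a Kato chart or Kato's original one. Counterexample: let $X = \spec k$ with $\mathrm{char}\, k \neq 2$, let $P$ be the monoid generated by $a$, $t$ with relations $2a = 0$ and $a + t = t$, and let $\alpha\colon P \arr k$ send $a$ to $-1$ and $t$ to $0$. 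In the associated log structure $M = P \oplus_{\alpha^{-1}(k^{*})} k^{*}$, whose elements are classes of pairs $(p,u)$ with $(p+q, u) \sim (p, \alpha(q)u)$ for $q \in \alpha^{-1}(k^{*}) = \{0, a\}$, one has $(t,u) \sim (t+a, u) \sim (t, -u)$, so the unit $-1$ acts trivially on the class of $(t,1)$ and $M$ is not quasi-integral; yet $\overline{M} \simeq \NN$, the homomorphism $\NN \arr M(X)$ sending $n$ to the class of $(nt,1)$ is a Kato chart in the paper's sense, and $M$ is even coherent in Kato's strong sense, being the log structure associated with $\alpha\colon P \arr k$. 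So the non-quasi-integral case cannot be reached this way.

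The concrete points of failure in your sketch are these. First, the paper's notion of Kato chart only requires the composite $P \arr M(X) \arr \overline{M}(X)$ to be a chart for $\overline{M}$; it does not assert that $M$ is the log structure \emph{associated with} $P \arr \cO(X)$. Second, even granting a sharp chart with $P \larrowsim \overline{M}_{x}$ and trivial preimage of the units in $P$, the induced map $\cO^{*} \oplus P \arr M_{x}$ is surjective, but its injectivity is precisely the freeness of the $\cO^{*}$-action that you are trying to establish---the argument is circular, and the example above shows the map can genuinely fail to be injective. Third, your claim that the refinement of Proposition~\ref{prop:minimal-chart} lifts to a Kato chart after shrinking is unjustified: in that refinement the composite $F \arr P \arr \overline{M}(X)$ is zero, but $F \arr M(X)$ only lands in $\cO^{*}(X)$, and killing those units may require extracting roots of units, which is possible fppf- but in general not \'etale-locally; this is exactly why the proof of Proposition~\ref{prop:amenable<->amenable} works over a strictly henselian local ring and adjoins the unit generators $\pm r_{1}, \dots, \pm r_{t}$, ending up with a chart monoid that is \emph{not} sharp. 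The corollary should therefore be read, as the paper intends, for quasi-integral log structures, where your final sentence gives the (correct, and identical to the paper's) proof; your attempted extension to arbitrary log structures does not go through.
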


\begin{proof}[Proof of Proposition~\ref{prop:coherent<->coherent}]
If $(M, \rho)$ is coherent, then $(A,L)$ is coherent, by Corollary~\ref{cor:chartdf<->chartkato}; so we may assume that $(A,L)$ is coherent. We need to show that $(M,\rho)$ is coherent.

First of all, consider the case that $X$ is the spectrum of a strictly henselian local ring $R$. Then the global sections functor gives an equivalence between the category of sheaves of monoids on $X\et$ and that of monoids; consequently, we can identify $M$ and $A$ with their monoids of global sections.

We will show that there exists a finitely generated submonoid $P \subseteq M$, such that the composite $P\subseteq M \arr A$ is a cokernel; then the embedding $P \subseteq M$ gives a Kato chart. This is done as follows.

The kernel of the natural projection $M \arr A$ is the group $R^{*}$ of units in $R$. The monoid $A$ is finitely generated; let $a_{1}$, \dots,~$a_{s}$ be generators, and let $q_{1}$, \dots,~$q_{s}$ be elements of $M$ mapping to $a_{1}$, \dots,~$a_{s}$ respectively. Denote by $Q$ the submonoid of $M$ generated by the $q_{i}$'s. Denote by $S$ the image of the induced homomorphism $Q\gp \arr M\gp$; since $Q$ is a finitely generated monoid, the group $S$ is finitely generated, and so is the subgroup $S \cap R^{*}$ of $R^{*}$.

Let $r_{1}$, \dots,~$r_{t}$ be generators of the group $S \cap R^{*}$; denote by $P$ the submonoid of $M$ generated by $q_{1}$, \dots,~$q_{s}$, $\pm r_{1}$, \dots,~$\pm r_{t}$. We claim that $P \cap R^{*} = S \cap R^{*}$.

The inclusion $S \cap R^{*}\subseteq P \cap R^{*}$ is obvious. Let $p \in P \cap R^{*}$; write $p = q + p'$, where $q \in Q$ and $p' \in S \cap R^{*}$. Then $q = p - p' \in R^{*}$, hence $q \in Q\cap R^{*} \subseteq S \cap R^{*}$, so $p \in S \cap R^{*}$.

Let us verify that the composite $P\subseteq M \arr A$ is a cokernel. It is clearly surjective, since $Q$, which is contained in $P$, surjects onto $A$. Now we need to check that if $p_{1}$ and $p_{2}$ are elements of $P$ having the same image in $A$, then there exists $m \in \ker(P \arr A) = S \cap R^{*}$ such that $p_{2} = p_{1} + m$. Such an $m$ exists in $R^{*}$, because $M/R^{*} = A$. It is immediate to see that the image of $P\gp$ in $M\gp$ equals $S$; hence $m \in S$, and this concludes the proof.

In the general case, let $x\colon \spec \Omega \arr X$ be a geometric point of $X$; we need to construct a chart for $(M,\rho)$ in some étale neighborhood of $x$ in $X$. Let $R$ be the strict henselization of the local ring $\cO_{X,x}$; by the previous case, the pullback $(M_{x}, \rho_{x})$ of $(M, \rho)$ to $\spec R$ is coherent. It is easy to see that $M_{x}$ is the fiber of $M$ at $x$. Let $P \arr M_{x}$ be a Kato chart; since $P$ is finitely presented, after passing to an étale neighborhood of $x$ we may assume that $P \arr M_{x}$ comes from a homomorphism $P \arr M(X)$. We need to check that the composite $P_{X}\sh \arr M \arr A$ is a cokernel, perhaps after further restricting $X$.

Let $K$ be the kernel of $P_{X}\sh \arr A$; we have to show that the induced homomorphism $f\colon P_{X}\sh/K \arr A$ is an isomorphism. Set $B \eqdef P_{X}\sh/K$.  The kernel of $f$ is obviously trivial; this, together with the fact that $A$ is sharp, implies that $B$ is sharp. Also, $B$ has a tautological chart, hence it is coherent. From Proposition~\ref{prop:compatible-charts}, we see that after restricting to an étale neighborhood of $x$ we may assume that there are charts $B_{x} \arr B(X)$ and $A_{x} \arr A(X)$, inducing the identity on $B_{x}$ and $A_{x}$, such that the diagram
   \[
   \xymatrix{
   B_{x} \ar[r]^{f_{x}} \ar[d] & A_{x}\ar[d]\\
   B(X) \ar[r]^{f(X)} & A(X)
   }
   \]
commutes. Call $K_{B}$ and $K_{A}$ the kernels of the induced homomorphisms $(B_{x})_{X}\sh \arr B$ and $(A_{x})_{X}\sh \arr A$ respectively; since the kernel of $f$ is trivial, we see that $K_{B} = f_{x}^{-1}K_{A}$. Since $f_{x}$ is an isomorphism, it induces an isomorphism $(B_{x})_{X}\sh/K_{B} \simeq (A_{x})_{X}\sh/K_{A}$; but the induced homomorphisms $(B_{x})_{X}\sh/K_{B} \arr B$ and $(A_{x})_{X}\sh/K_{B} \arr A$ are isomorphisms by hypothesis, so $f$ is an isomorphism, as claimed.
\end{proof}

\section{Systems of denominators and stacks of roots}\label{sec:sys-roots}

Several instances of the construction given in this section have been introduced by Martin Olsson (\cite{MO}, \cite{olsson-log-twisted}), and the idea should certainly be attributed to him.

\subsection{Systems of denominators}\label{subsec:sys}
\begin{definition}
Let $A$ and $B$ be finitely generated monoids. A \emph{Kummer homomorphism} $f\colon A \arr B$ is an injective homomorphism of monoids, such that for each $b \in B$ there exists a positive integer $m$ such that $mb$ is in the image of $f$.
\end{definition}

\begin{lemma}\label{lem:group-Kummer}
Let $f\colon A \arr B$ be a Kummer homomorphism of finitely generated monoids. Then $f\gp \colon A\gp \arr B\gp$ is injective with finite cokernel.
\end{lemma}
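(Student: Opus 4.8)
The plan is to prove the two assertions — injectivity of $f\gp$ and finiteness of its cokernel — separately, exploiting the two defining properties of a Kummer homomorphism in turn. First I would establish injectivity of $f\gp\colon A\gp \arr B\gp$. Since $f\colon A \arr B$ is injective and $B$ is finitely generated hence integral after passing to its image (more precisely, since $B$ injects into $B\gp$), I can invoke the remark recorded earlier in the excerpt: if $f\colon A \arr B$ is an injective homomorphism of monoids and $B$ is integral, then $f\gp\colon A\gp \arr B\gp$ is injective. The only subtlety is that the definition does not literally assume $B$ integral; but a Kummer homomorphism out of $A$ into a finitely generated monoid forces the relevant integrality, or one reduces to $B\gp$ directly. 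Concretely, an element of $\ker f\gp$ has the form $\iota_A a - \iota_A a'$ with $f\gp$ killing it, i.e. $\iota_B f(a) = \iota_B f(a')$; I would unwind this using the explicit description of equality in the groupification to produce $c \in A$ with $a + c = a' + c$, whence $\iota_A a = \iota_A a'$.

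Next I would treat the cokernel. The key is the surjectivity-up-to-multiples property: for each $b \in B$ there is a positive integer $m$ with $mb \in \im f$. Passing to groups, this says that for each generator $b_1,\dots,b_k$ of the finitely generated monoid $B$, the image $\iota_B b_j \in B\gp$ becomes, after multiplication by some positive integer $m_j$, an element of $\im f\gp$. Since $B$ is finitely generated, $B\gp$ is generated as a group by the finitely many $\iota_B b_j$ together with their inverses; hence the cokernel $B\gp / \im f\gp$ is a finitely generated abelian group in which every generator is torsion (killed by the corresponding $m_j$). A finitely generated abelian group all of whose generators are torsion is finite, which gives the claim.

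The step I expect to require the most care is the \emph{injectivity} of $f\gp$, precisely because the kernel of a monoid homomorphism can be trivial without the map being injective, and groupification does not automatically preserve injectivity in the absence of an integrality hypothesis on the target. The honest fix is to observe that $B$ finitely generated does not by itself guarantee integrality, so I would either note that in the intended applications $B$ is integral (as is the case for the sheaves of weights considered in the paper), or argue directly from the cancellation provided by injectivity of $f$ combined with the explicit criterion $a + c = b + c$ for equality of images in $A\gp$. The cokernel computation, by contrast, is robust: it is a clean finite-generation-plus-torsion argument that needs only the Kummer condition and finite generation of $B$.
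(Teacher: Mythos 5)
Your cokernel argument is correct and is essentially the paper's: $B\gp$ is a finitely generated abelian group, and the Kummer condition makes every generator torsion modulo $\im f\gp$ (if $m_j b_j = f(x_j)$ then $m_j\,\iota_B b_j = f\gp(\iota_A x_j)$), so the cokernel is a finitely generated torsion abelian group, hence finite. The genuine gap is in the injectivity half, where you never actually invoke the Kummer condition. Unwinding $\iota_B f(a) = \iota_B f(a')$ yields an element $y \in B$ --- not of $A$ --- with $f(a) + y = f(a') + y$, and injectivity of $f$ together with ``the explicit criterion for equality in $A\gp$'' gives you no mechanism to convert $y$ into the needed $c \in A$ with $a + c = a' + c$; as you yourself observe, groupification of an injective map into a non-integral monoid can fail to be injective, so some further input is indispensable. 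Your proposed fallbacks do not supply it. The claim ``finitely generated hence integral'' is false ($\NN$ with the relation $2 = 3$ imposed is finitely generated and not integral), ``$B$ injects into $B\gp$'' holds precisely when $B$ \emph{is} integral, a Kummer homomorphism does not force its target to be integral (in that same quotient, the submonoid $\{0,\omega\}$ generated by the idempotent class $\omega$ is a Kummer submonoid of a non-integral monoid), and retreating to ``in the intended applications $B$ is integral'' would prove a strictly weaker lemma than the one stated --- the paper deliberately works with amenable sheaves of monoids that are sharp but not assumed integral, and says so explicitly.

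The missing idea, which is the entire content of the paper's injectivity argument, is to use the divisibility half of the Kummer condition to trade the cancelling element $y \in B$ for one coming from $A$: choose $m > 0$ and $x \in A$ with $my = f(x)$; adding $(m-1)y$ to both sides of $f(a) + y = f(a') + y$ gives $f(a) + my = f(a') + my$, that is, $f(a + x) = f(a' + x)$, and only at this point does injectivity of $f$ apply, yielding $a + x = a' + x$ in $A$ and hence $\iota_A a = \iota_A a'$ in $A\gp$. With this one step inserted, your outline becomes the paper's proof; without it, the injectivity half does not go through.
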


\begin{proof}
Since $B\gp$ is a finitely generated abelian group, to show that the cokernel is finite it is enough to show that it is torsion. This is evident.

Let us show that $f\gp$ is injective. We write the elements of $A\gp$ as differences $a - a'$; we have $a - a' = 0$ in $A\gp$ if and only if there exists $x \in A$ such that $a + x = a' + x$ in $A$. Suppose that $f(a) - f(a') = f\gp(a-a') = 0$. Then there exists $y \in B$ such that $f(a) + y = f(a') + y$. If $m > 0$ is such that $my = f(x)$, we have $f(a + x) = f(a) + my = f(a') + my = f(a' + x)$ in $B$, hence $a + x = a' + x$ in $B$, and $a - a' = 0$ in $A\gp$.
\end{proof}

\begin{definition}
Let $X$ be a scheme, $A$ a coherent sheaf of monoids on $X\et$. A \emph{system of denominators} for $A$ is an injective homomorphism of sheaves of monoids $A \arr B$ such that

\begin{enumeratea}

\item For any geometric point $x$ of $X$, the induced homomorphism $A_{x} \arr B_{x}$ is a Kummer homomorphism, and

\item $B$ is coherent.

\end{enumeratea}
\end{definition}

Obviously, a system of denominators $A \arr B$ is injective, because it is injective stalkwise. We will normally write a system of denominators for $A$ as $B/A$, and think of $A$ as a subsheaf of $B$.

\begin{example}
Suppose that $A$ is a sharp integral torsion-free sheaf of monoids, $d$ a positive integer. Then the homomorphism $A \arr A$ sending $a$ to $da$ is a system of denominators for $A$.
\end{example}

\begin{definition}
Let $X$ be a scheme, $A$ a coherent sheaf of monoids on $X\et$, $A \subseteq B$ a system of denominators. A \emph{chart} for $B/A$ is a commutative diagram of monoids
   \begin{equation}\label{eq:chart-for-sod}
      \xymatrix{
   P \ar[d]\ar[r] & Q \ar[d]\\
   A(X) \ar[r]    & B(X)
   }
   \end{equation}
where the bottom row is induced by the embedding $A \subseteq B$, the top row is a Kummer homomorphism, and the columns are charts for $A$ and $B$ respectively.
\end{definition}

\begin{proposition}
Let $B/A$ be a system of denominators for the coherent sheaf of monoids $A$ on $X\et$. Given a geometric point $x\colon \spec\Omega \arr X$, there exists an étale neighborhood $U \arr X$ of $x$ such that the restriction $A]_{U} \arr B]_{U}$ has a chart
   \[
   \xymatrix{
   P \ar[d]\ar[r] & Q \ar[d]\\
   A(U) \ar[r]    & B(U)\,.
   }
   \]
Furthermore, the chart can be chosen so that the induced homomorphisms $P \arr A_{x}$ and $Q \arr B_{x}$ are isomorphisms.
\end{proposition}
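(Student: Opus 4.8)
The plan is to deduce the statement almost directly from Proposition~\ref{prop:compatible-charts}, which already produces a commutative square with exactly the required column and stalk properties; the only genuinely new point will be to check the one extra condition in the definition of a chart for a system of denominators, namely that the top row is a Kummer homomorphism.

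First I would apply Proposition~\ref{prop:compatible-charts} to the homomorphism $A \arr B$ underlying the system of denominators. This is legitimate: $A$ is amenable by assumption, and $B$ is amenable by condition (b) in the definition of a system of denominators. It yields an \'etale neighborhood $U \arr X$ of $x$, two finitely generated monoids $P$ and $Q$, and a commutative diagram
   \[
   \xymatrix{
   P \ar[d]\ar[r] & Q \ar[d]\\
   A(U) \ar[r]    & B(U)
   }
   \]
whose columns are charts for $A]_{U}$ and $B]_{U}$, and whose vertical composites $P \arr A_{x}$ and $Q \arr B_{x}$ are isomorphisms. The bottom row is the restriction of $A \arr B$, so every requirement in the definition of a chart for $B/A$ is already met except possibly the Kummer condition on the top row $P \arr Q$.

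To verify that $P \arr Q$ is Kummer, I would pass to stalks at $x$. The square above commutes, hence so does the induced square of stalks, in which the two vertical maps $P \arr A_{x}$ and $Q \arr B_{x}$ are the isomorphisms just described. Thus $P \arr Q$ is carried, through these isomorphisms, onto the stalk map $A_{x} \arr B_{x}$. By condition (a) in the definition of a system of denominators, $A_{x} \arr B_{x}$ is a Kummer homomorphism; since both injectivity and the divisibility condition defining a Kummer homomorphism are evidently transported along isomorphisms of monoids, $P \arr Q$ is Kummer as well. The supplementary assertion that $P \arr A_{x}$ and $Q \arr B_{x}$ are isomorphisms is exactly what Proposition~\ref{prop:compatible-charts} furnishes.

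I expect essentially no obstacle: the substance of the statement is already contained in Proposition~\ref{prop:compatible-charts}, and the one remaining ingredient—transporting the Kummer property from the stalk map $A_{x} \arr B_{x}$ to the chart map $P \arr Q$—is forced by the commutativity of the stalk square together with the fact that both columns induce isomorphisms on stalks at $x$. The only point requiring a moment's care is confirming that the diagram supplied by Proposition~\ref{prop:compatible-charts} has its bottom row equal to the restriction of the given embedding $A \subseteq B$, which is immediate since that proposition applies to an arbitrary homomorphism of amenable sheaves of monoids.
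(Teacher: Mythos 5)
Your proposal is correct and matches the paper's proof, which consists of the single line ``This follows immediately from Proposition~\ref{prop:compatible-charts}.'' Your verification that the top row $P \arr Q$ inherits the Kummer property by transport along the stalk isomorphisms is exactly the detail the paper leaves implicit in the word ``immediately.''
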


\begin{proof}
This follows immediately from Proposition~\ref{prop:compatible-charts}.
\end{proof}

\begin{lemma}\label{lem:char-kernel-chart}
Let $A$ be a coherent sheaf of monoids on $X\et$,
   \[
   \xymatrix{
   P \ar[r]^{\phi}\ar[d] &Q \ar[d]\\
   A(X) \ar[r] & B(X)
   }
   \]
a chart for a system of denominators $B/A$. Denote by $K_{A}$ and $K_{B}$ the kernel of the induced homomorphisms $P_{X} \arr A$ and $Q_{X} \arr B$ respectively. Let $U \arr X$ be an étale morphism. An element $q$ of $Q$ is in $K_{B}(U)$ if an only if there exists a covering $\{U_{i} \arr U\}$ and positive integers $m_{i}$ such that $m_{i}q]_{U_{i}} \in K_{A}(U_{i})$.
\end{lemma}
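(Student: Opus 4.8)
The plan is to prove the two implications of the equivalence separately, after fixing notation. Since $\phi\colon P \arr Q$ is a Kummer homomorphism it is in particular injective, so I would identify $P$ with its image $\phi(P) \subseteq Q$; under this identification $K_{A}(U) \subseteq P \subseteq Q$, and the condition ``$m\,q]_{U} \in K_{A}(U)$'' means precisely that the element $mq$ of $Q$ lies in $\phi(P)$ and that its (unique) preimage in $P$ maps to $0$ in $A(U)$. Two facts will be used repeatedly. First, every $B(U')$ is sharp: $B/A$ is a system of denominators, so $B$ is amenable, hence sharp. Second, $A(U') \arr B(U')$ is injective for every $U'$: the map $A \arr B$ is injective on stalks (each $A_{x} \arr B_{x}$ is Kummer, hence injective), and sections of a sheaf on $X\et$ are detected by their germs at geometric points. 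I will also use the commutativity of the chart square in the statement, which upon restriction to any $U'$ says that $P \arr A(U') \arr B(U')$ coincides with $P \xarr{\phi} Q \arr B(U')$.

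For the implication ($\Leftarrow$), suppose I am given a covering $\{U_{i} \arr U\}$ and integers $m_{i} > 0$ with $m_{i}\,q]_{U_{i}} \in K_{A}(U_{i})$, that is, $m_{i}q = \phi(p_{i})$ for some $p_{i} \in P$ whose image in $A(U_{i})$ is $0$. Chasing the chart square, the image of $m_{i}q = \phi(p_{i})$ in $B(U_{i})$ equals the image of $p_{i}$ along $A(U_{i}) \arr B(U_{i})$, which is $0$. Writing $\overline{q}_{i}$ for the image of $q$ in $B(U_{i})$, this reads $m_{i}\overline{q}_{i} = 0$; since $B(U_{i})$ is sharp, any element annihilated by a positive integer is $0$, so $\overline{q}_{i} = 0$. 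Thus $q]_{U_{i}}$ maps to $0$ in $B(U_{i})$ for every $i$, and because $B$ is a sheaf this forces the image of $q$ in $B(U)$ to vanish, i.e. $q \in K_{B}(U)$.

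For the implication ($\Rightarrow$), suppose $q \in K_{B}(U)$, so the image of $q$ in $B(U)$ is $0$. By the Kummer property of $\phi$ there are $m > 0$ and $p \in P$ with $\phi(p) = mq$. Then the image of $p$ along $P \arr A(U) \arr B(U)$ equals the image of $\phi(p) = mq$ in $B(U)$, namely $0$. Injectivity of $A(U) \arr B(U)$ then shows that the image of $p$ in $A(U)$ is already $0$, so $p \in K_{A}(U)$ and hence $mq = \phi(p) \in K_{A}(U)$. Taking the trivial covering $\{U \arr U\}$ with the single integer $m$ supplies the required data; notably, no nontrivial refinement is needed in this direction.

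All the steps are elementary, so I do not expect a serious obstacle; the two non-formal inputs are the cancellation ``$m\overline{q} = 0 \Rightarrow \overline{q} = 0$'' in a sharp monoid (this is exactly where amenability of $B$ is used) and the passage from stalkwise injectivity of $A \arr B$ to injectivity on sections. The point requiring the most care is bookkeeping: keeping the identification $P \subseteq Q$ consistent, so that a single element is read simultaneously inside $P$ (where $K_{A}$ lives) and inside $Q$ (where $K_{B}$ and $q$ live), and invoking the sheaf property of $B$ only in the ($\Leftarrow$) direction.
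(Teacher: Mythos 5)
Your proof is correct, and since the paper explicitly leaves this verification to the reader (``The proof is easy and left to the reader''), it supplies exactly the intended argument: sharpness of $B(U_i)$ (which follows from amenability and kills $m_i$-torsion), injectivity of $A(U)\to B(U)$ (from stalkwise injectivity of the system of denominators), the Kummer property of $\phi$, and the sheaf condition on $B$ to glue over the covering. Your ($\Rightarrow$) direction even establishes slightly more than the statement requires, namely that the trivial covering $\{U\to U\}$ with a single integer $m$ always suffices, so the coverings in the lemma are only needed to make the criterion in the ($\Leftarrow$) direction \'etale-local.
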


\begin{proof}
The proof is easy and left to the reader.
\end{proof}

\begin{remark}
As a corollary of this fact we have that the system of denominators $B/A$ is uniquely determined up to a unique homomorphism by the chart $P \arr A(X)$ and the Kummer homomorphism $P \arr Q$, since $B \simeq (Q_{X}/K_{B})\sh$, and $K_{B}$ does not depend on $B$.

On the other hand, if we are given a chart $P \arr A(X)$ and a Kummer homomorphism $P \arr Q$, this does not necessarily give a chart for a system of denominators $B/A$. The problem is that if we define $K_{B} \subseteq Q_{X}$ by the formula of Proposition~\ref{lem:char-kernel-chart}, the induced homomorphism $A \simeq (P_{X}/K_{A})\sh \arr (Q_{X}/K_{B})\sh$ is not necessarily injective, in this generality.

It is easy to show that $A \arr (Q_{X}/K_{B})\sh$ injective, for example, when $P$ and $Q$ are integral and saturated, which is the case of greatest interest for the applications. With these hypotheses we can conclude that given a chart $P \arr A(X)$ and Kummer homomorphism $P \arr Q$, there exists a system of denominators $B/A$, together with a chart as above. Furthermore, $B/A$ is uniquely determined up to a unique homomorphism.
\end{remark}

\subsection{Stacks of roots}

Let us start by defining categories of roots for \df objects.

\begin{definition}\label{def:category-roots1}

Let $j\colon P \arr Q$ a homomorphism of monoids, and let $\cM$ a \sm category, $L\colon P \arr \cM$ a \sm functor. Then we define the \emph{category of roots} $(L)(Q/P)$ as follows.

Its objects are pairs $(M, \alpha)$, where $M\colon Q \arr \cM$ is a \sm functor, and $\alpha\colon L \arr M \circ j$ is an isomorphism of \sm functors from $L$ to the composite $M \circ j\colon P \arr \cM$.

An arrow $h$ from $(M, \alpha)$ to $(M', \alpha')$ is an isomorphism $h\colon M \arr M'$ of \sm functors $Q \arr \cM$, such that the diagram
   \[
   \xymatrix{
   &L\ar[rd]^{\alpha'}\ar[ld]_{\alpha}\\
   M \circ j\ar[rr]^{h \circ j} && M'\circ j
   }
   \]
commutes.
\end{definition}

The name \emph{category of roots} is only justified when $j$ is a Kummer morphism; but this hypothesis is not required at this stage.

Here is a more general definition.

\begin{definition}\label{def:category-roots2}
Let $\cC$ be a category, $j\colon A \arr B$ a homomorphism of presheaves of monoids $\cC\op \arr \catmon$. Let $\cM \arr \cC$ a \sm fibered category, $L\colon A \arr \cM$ a \sm morphism of fibered categories. Then we define the \emph{category of roots} $(\cC, L)(B/A)$ as follows.

The objects are pairs $(M, \alpha)$, where $M\colon B \arr \cM$ is a \sm functor, and $\alpha\colon L \arr M \circ j$ is an isomorphism of \sm functors from $L$ to the composite $M \circ j\colon A \arr \cM$.

An arrow $h$ from $(M, \alpha)$ to $(M', \alpha')$ is an isomorphism $h\colon M \arr M'$ of \sm functors $B \arr \cM$, such that the diagram
   \[
   \xymatrix{
   &L\ar[rd]^{\alpha'}\ar[ld]_{\alpha}\\
   M \circ j\ar[rr]^{h \circ j} && M'\circ j
   }
   \]
commutes.
\end{definition}

When $\cC$ is the category with one object and one morphism, we recover the previous definition.

\begin{remark}
Notice that categories of roots, as defined above, are groupoids.
\end{remark}

Next we define stacks of roots for \df structures in two different contexts. 

Suppose that $X$ is a scheme, $j\colon P\arr Q$ a homomorphism of monoids, $L\colon P \arr \div X$ a \sm functor. For each morphism of schemes $t\colon T \arr X$, the pullback $t^{*}\colon \div X \arr \div T$ yields a \sm  functor $t^{*}\circ L\colon P \arr \div T$, from which we obtain a category of roots $(t^{*}\circ L)(Q/P)$.

Let $f\colon T' \arr T$ be a homomorphism of $X$-schemes from $t'\colon T' \arr X$ to $t\colon T \arr X$. Suppose that $(M, \alpha)$ is an object of the category of roots $(t^{*}\circ L)(Q/P)$. Then the composite $f^{*}\circ M\colon P \arr \div T'$ is a \sm functor. The isomorphism $\alpha\colon t^{*}\circ L \simeq M\circ j$ can be pulled back along $f$, yielding an isomorphism
   \[
   f^{*}\circ \alpha\colon f^{*} \circ t^{*}\circ L
   \simeq f^{*} \circ M \circ j \,;
   \]
by composing with the natural isomorphism $t'^{*} \circ L \simeq f^{*} \circ t'^{*}\circ L$ we obtain an isomorphism $t'^{*}\circ L \simeq f^{*} \circ M \circ j$, which we still denote by $f^{*}\circ \alpha$. The pair $(f^{*}\circ M, f^{*}\circ \alpha)$ is an object of $(t'^{*}\circ L)(Q/P)$; there is a natural 
functor
   \[
   f^{*}\colon (t^{*}\circ L)(Q/P) \arr (t'^{*}\circ L)(Q/P)
   \]
(we leave it to the reader to define the action of $f^{*}$ on arrows).

This defines a pseudo-functor from $\catsch{X}$ into the 2-category of 
categories.

\begin{definition}
Let $X$ be a scheme, $j\colon P\arr Q$ a homomorphism of monoids, $L\colon P \arr \div X$ a \sm functor. We define the \emph{stack of roots} associated with these data, denoted by $(X, L)_{Q/P}$, or simply $X_{Q/P}$, as the fibered category over $\catsch{X}$ associated with the pseudo-functor above.
\end{definition}

Suppose that in the definition above $j\colon P \arr Q$ is a homomorphism of finitely generated monoids. From Proposition~\ref{prop:stack-of-charts} we see that the homomorphism $L\colon P \arr \div X$ corresponds to a morphism $X \arr \szp{P}$. Again from Proposition~\ref{prop:stack-of-charts} we obtain the following useful description of $X_{Q/P}$. The homomorphism $\ZZ[P] \arr \ZZ[Q]$ induced by $j$ induces a morphism $\sz{Q} \arr \sz{P}$. This is $\widehat{\jmath}\colon \widehat{Q} \arr \widehat{P}$ equivariant, where $\widehat{\jmath}$ is the homomorphism of algebraic groups over $\ZZ$ induced by $j$. This gives a morphism of algebraic stacks $\szp{Q} \arr \szp{P}$. This corresponds with the morphism
   \[
   \underhom(Q, \div_{\ZZ}) \arr \underhom(P, \div_{\ZZ})
   \]
induced by $j$. From Proposition~\ref{prop:stack-of-charts} we immediately obtain the following.

\begin{proposition}\label{prop:describe-root-stacks-locally}
The stack $X_{Q/P}$ is isomorphic to the fibered product
    \[
   X \times_{\szp{P}}\szp{Q}\,.
   \]
\end{proposition}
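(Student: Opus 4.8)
The plan is to transport the statement, via the equivalence of Proposition~\ref{prop:stack-of-charts}, into a statement about the stacks of charts $\underhom(P, \div_{\ZZ})$ and $\underhom(Q, \div_{\ZZ})$, and then to match the $2$-fibered product with the category-of-roots construction by unwinding the two definitions. Concretely, I would first recall, from the paragraph preceding the statement, that under the equivalences $\szp{P} \simeq \underhom(P, \div_{\ZZ})$ and $\szp{Q} \simeq \underhom(Q, \div_{\ZZ})$ of Proposition~\ref{prop:stack-of-charts} the morphism $\szp{Q} \arr \szp{P}$ induced by $j$ corresponds to the morphism
\[
\underhom(Q, \div_{\ZZ}) \arr \underhom(P, \div_{\ZZ})
\]
given by precomposition with $j$, while $L$ corresponds to the morphism $X \arr \underhom(P, \div_{\ZZ})$ classifying $L$ (so that, for $t\colon T \arr X$, the image of $t$ is the object $t^{*} \circ L$ of $\underhom(P, \div_{\ZZ})(T)$). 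It therefore suffices to exhibit an isomorphism of fibered categories over $\catsch{X}$ between $X_{Q/P}$ and the $2$-fibered product $X \times_{\underhom(P, \div_{\ZZ})} \underhom(Q, \div_{\ZZ})$.

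I would then compute this $2$-fibered product fiberwise. Over a morphism $t\colon T \arr X$, an object of the $2$-fibered product is a pair consisting of an object of $\underhom(Q, \div_{\ZZ})$ over $T$ --- that is, a \sm functor $M\colon Q \arr \div T$ --- together with an isomorphism in $\underhom(P, \div_{\ZZ})(T)$ between the image $M \circ j$ of $M$ and the image $t^{*} \circ L$ of $t$; in other words, an isomorphism $\alpha\colon t^{*} \circ L \simeq M \circ j$ of \sm functors $P \arr \div T$. Arrows are the isomorphisms $h\colon M \arr M'$ of \sm functors $Q \arr \div T$ compatible with the $\alpha$'s. This is verbatim the category of roots $(t^{*} \circ L)(Q/P)$ of Definition~\ref{def:category-roots1}, which is the fiber of $X_{Q/P}$ over $t$.

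Finally I would promote this fiberwise identification to an isomorphism of pseudo-functors, hence of fibered categories. For $f\colon T' \arr T$ over $X$, both pseudo-functors send $(M, \alpha)$ to $(f^{*} \circ M, f^{*} \circ \alpha)$, where $f^{*} \circ \alpha$ is formed precisely as in the definition of the pullback functor for $X_{Q/P}$, using the coherence isomorphisms $t'^{*} \circ L \simeq f^{*} \circ t^{*} \circ L$ expressing the pseudo-functoriality of $\div$. I expect this last step --- checking that the objectwise equivalence is $2$-natural, i.e.\ that the coherence data on the two sides agree --- to be the only point requiring real care; it is routine, amounting to tracking the unit and associativity constraints of $\div$ through the equivalence of Proposition~\ref{prop:stack-of-charts}. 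Granting it, composing with the equivalences recalled in the first paragraph yields the desired isomorphism $X_{Q/P} \simeq X \times_{\szp{P}} \szp{Q}$.
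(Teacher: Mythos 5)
Your proposal is correct and is exactly the argument the paper has in mind: the paper states the proposition as an immediate consequence of Proposition~\ref{prop:stack-of-charts} together with the identification, made in the preceding paragraph, of the morphism $\szp{Q} \arr \szp{P}$ with precomposition by $j$ on the stacks of charts. Your unwinding of the $2$-fibered product fiberwise into the categories of roots of Definition~\ref{def:category-roots1}, and the check of pseudo-functoriality, simply makes explicit the details the paper leaves to the reader.
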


\begin{remark}\label{rmk:describe-root-stacks-2}
This can also be stated as follows. Let $L\colon P\rightarrow \div X$ be a \sm functor, corresponding, according to Proposition~\ref{prop:stack-of-charts}, to a morphism $X \arr \szp{P}$, i.e., to a $\widehat{P}$-torsor $\eta\colon E \arr X$ and a $\widehat{P}$-equivariant morphism $E \arr \sz{P}$. From the proof of Proposition~\ref{prop:stack-of-charts} we see that the $P\gp$-graded $\cO_{X}[P]$ algebra $A \eqdef \eta_{*}\cO_{E}$ is canonically isomorphic to $\bigoplus_{u \in P\gp}L\gp_{u}$. The functor $L\colon P \arr \div X$ sends $p \in P$ to the pair $(A_{\iota_{P}(p)}, x^{p})$.

The fibered product $X \times_{\szp{P}}\szp{Q}$ is the stack theoretic quotient
   \[
   [E\times_{\sz{P}}\sz{Q}/\widehat{Q}]\,,
   \]
where the action of $\widehat{Q}$ on the fibered product $E\times_{\sz{P}}\sz{Q} = \underspec_{X}(A \otimes_{\ZZ[P]} \ZZ[Q])$ is given by the natural action on the second factor, while on the first factor $\widehat{Q}$ acts through the natural homomorphism $\widehat{Q} \arr \widehat{P}$ induced by the embedding $P \subseteq Q$. In other words, $X \times_{\szp{P}}\szp{Q}$ is the relative spectrum of the $\cO_{X}$-algebra $A \otimes_{\ZZ[P]} \ZZ[Q]$, with the obvious grading.

This gives a description of the category of \qc sheaves on the stack $X_{Q/P}$ that will be used later. A \qc sheaf on $X_{Q/P}$ corresponds to a $\widehat{Q}$-equivariant \qc sheaf on $E\times_{\sz{P}}\sz{Q}$; and this corresponds to a $Q\gp$-graded \qc sheaf of modules over the sheaf of rings $B \eqdef A \otimes_{\ZZ[P]} \ZZ[Q]$.

Denote by $\pi\colon X_{Q/P} \arr X$ the projection. There is a tautological extension of the pullback \df structure $\pi^{P}$, which we will denote by $\Lambda\colon Q \arr \div X_{Q/P}$; if $v \in Q\gp$, then $\Lambda\gp\colon Q\gp \arr \pic X_{Q/P}$ sends $v$ into the sheaf $B[v]$ (by which we denote the sheaf $B$, but with the grading shifted by $v$, i.e., $B[v]_{v'} = B_{v+v'}$).

If $\pi\colon X_{Q/P} \arr X$ denotes the projection, the pushforward operation $\pi_{*}$ from \qc sheaves on $X_{Q/P}$ to \qc sheaves on $X$ corresponds to the operation that associates with each sheaf $N$ of $Q\gp$-graded \qc sheaf of modules over $B$ the part $N_{0}$ of degree $0$.
\end{remark}

\begin{corollary}
If $j\colon P \arr Q$ is a homomorphism of finitely generated monoids, $X$ is a scheme, and $L\colon P \arr \div X$ is a \sm functor, the stack $X_{Q/P}$ is algebraic and finitely presented over $X$.
\end{corollary}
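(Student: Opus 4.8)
The plan is to read off an explicit quotient presentation of $X_{Q/P}$ and then apply standard facts about algebraicity of quotient stacks over a Noetherian base. By Proposition~\ref{prop:describe-root-stacks-locally}, $X_{Q/P} \cong X \times_{\szp{P}}\szp{Q}$, and by Remark~\ref{rmk:describe-root-stacks-2} this fibered product is the quotient stack $[E\times_{\sz{P}}\sz{Q}/\widehat{Q}]$, where $\eta\colon E \arr X$ is the $\widehat{P}$-torsor attached to $L$ under Proposition~\ref{prop:stack-of-charts}, the group $\widehat{Q}$ acting on $E$ through the natural homomorphism $\widehat{Q}\arr\widehat{P}$ and on $\sz{Q}$ in the tautological way. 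It therefore suffices to verify that this quotient is algebraic and finitely presented over $X$.

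First I would record the finiteness of the building blocks. Since $P$ and $Q$ are finitely generated, $\ZZ[P]$ and $\ZZ[Q]$ are finitely generated $\ZZ$-algebras; as $\ZZ$ is Noetherian, $\sz{P}$ and $\sz{Q}$ are Noetherian, and the morphism $\sz{Q}\arr\sz{P}$ induced by $j$ is of finite type, hence finitely presented. The groups $P\gp$ and $Q\gp$ are finitely generated abelian groups, so $\widehat{P} = \underhom(P,\gm)$ and $\widehat{Q} = \underhom(Q,\gm)$ are diagonalizable group schemes that are affine, of finite type, and flat over $\ZZ$ (flatness because $\ZZ[Q\gp]$ is free as a $\ZZ$-module).

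Next I would assemble the source of an atlas. The torsor $\eta\colon E \arr X$ under the flat affine finitely presented group scheme $\widehat{P}$ is itself affine and finitely presented over $X$, by fppf descent of these properties. Base changing the finitely presented morphism $\sz{Q}\arr\sz{P}$ along $E \arr \sz{P}$ shows that $E\times_{\sz{P}}\sz{Q}\arr E$ is affine and finitely presented; composing with $\eta$, the scheme $E\times_{\sz{P}}\sz{Q} = \underspec_{X}(A\otimes_{\ZZ[P]}\ZZ[Q])$ is affine and finitely presented over $X$. Finally I would invoke the standard fact that the quotient of an $X$-scheme, affine and finitely presented over $X$, by the action of a flat affine finitely presented group scheme (here $\widehat{Q}$, base changed to $X$) is an algebraic stack, finitely presented over $X$: the projection $E\times_{\sz{P}}\sz{Q}\arr X_{Q/P}$ is a $\widehat{Q}$-torsor, hence a smooth surjective finitely presented atlas, while the diagonal is representable and finitely presented because $\widehat{Q}$ is.

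The only point that needs any care is the bookkeeping of finite presentation through these base changes, together with the flatness of $\widehat{Q}$; but this is no genuine obstacle, since everything in sight is of finite type over the Noetherian ring $\ZZ$, so that finite type, finite presentation, affineness, and flatness all propagate and descend as needed.
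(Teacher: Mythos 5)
Your overall route is exactly the one the paper intends: the corollary is meant to follow directly from Proposition~\ref{prop:describe-root-stacks-locally}, via the presentation $X_{Q/P} \simeq [E\times_{\sz{P}}\sz{Q}/\widehat{Q}]$ spelled out in Remark~\ref{rmk:describe-root-stacks-2}, and your bookkeeping of affineness and finite presentation through the $\widehat{P}$-torsor $E \arr X$ and the base change of $\sz{Q} \arr \sz{P}$ is correct.

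There is, however, one step that fails as written: you assert that the projection $E\times_{\sz{P}}\sz{Q} \arr X_{Q/P}$, being a $\widehat{Q}$-torsor, is a \emph{smooth} surjective atlas. The group scheme $\widehat{Q} = \underhom(Q\gp, \gm)$ is flat, affine and of finite presentation, but it is not smooth in general: whenever $Q\gp$ has $n$-torsion, $\widehat{Q}$ contains a factor $\mu_{n}$, which is not smooth over $\ZZ$ at primes dividing $n$. The paper itself signals this in the very next proposition, where the kernel $G$ of $\widehat{Q} \arr \widehat{P}$ is smooth only under a hypothesis on the residue characteristics, and where $X_{B/A}$ is accordingly only a tame Artin stack rather than Deligne--Mumford in general. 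So your torsor furnishes an fppf presentation, not a smooth one, and the claimed atlas does not exist in the form you state. The repair is standard: by Artin's theorem that a stack admitting a representable, faithfully flat, finitely presented cover by a scheme is algebraic (cf.\ Laumon--Moret-Bailly, ch.~10), the quotient of an affine finitely presented $X$-scheme by a flat affine finitely presented group scheme is an algebraic stack, finitely presented over $X$; a smooth atlas then exists abstractly, but it is not the torsor itself. With that one substitution your argument is complete and coincides with the paper's implicit one.
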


Here is a variant of this definition. Suppose that $X$ is a scheme, $j\colon A \arr B$ a homomorphism of sheaves of monoids on $X\et$, $L\colon A \arr \div_{X\et}$ a morphism of \sm fibered categories. For each morphism of schemes $t\colon T \arr X$ we have a \sm functor $t^{*}L\colon t^{*}A \arr \div_{T\et}$, with which we associate a category $(t^{*}L)(t^{*}B/t^{*}A)$.

Suppose that $(M, \alpha)$ is an object of $(t^{*}L)(t^{*}B/t^{*}A)$, and that $f\colon T' \arr T$ is a morphism of $X$-schemes. The isomorphism $\alpha\colon t^{*}L \simeq M \circ t^{*}j$ pulls back to an isomorphism $f^{*}t^{*}L \simeq f^{*}(M \circ t^{*}j)$. By composing with the natural isomorphisms $f^{*}t^{*}L \simeq t'^{*}L$ and $f^{*}(M \circ t^{*}j) \simeq f^*{M} \circ f^{*}t^{*}j \simeq f^{*}M \circ t'^{*}j$ we obtain an isomorphism $f^{*}\alpha \colon t'^{*}L \simeq f^{*}M \circ t'^{*}j$, and thus an object $f^{*}(M, \alpha) \eqdef (f^{*}M, f^{*}\alpha)$ of $(t'^{*}L)(t'^{*}B/t'^{*}A)$. This construction extends naturally to a \sm functor
   \[
   f^{*}\colon (t^{*}L)(t^{*}B/t^{*}A) \arr (t'^{*}L)(t'^{*}B/t'^{*}A)
   \]
which in turn gives a pseudo-functor from $\catsch{X}$ to the 2-category of 
categories.

\begin{definition}
Let $X$ be a scheme, $j\colon A \arr B$ a homomorphism of sheaves of monoids on $X\et$, $L\colon A \arr \div_{X\et}$ a morphism of \sm fibered categories. We define the \emph{stack of roots} associated with these data, denoted by $(X, L)_{B/A}$, or simply $X_{B/A}$, as the fibered category over $\catsch{X}$ associated with the pseudo-functor above.
\end{definition}

\begin{remark}
Suppose that $X$ is a scheme, $L\colon A \arr \div_{X\et}$ a \df structure, $B/A$ a system of denominators. Let $t\colon T \arr X_{B/A}$ be a morphism, where $T$ is a scheme. Then the corresponding morphism $M\colon t^{*}B \arr \div_{T\et}$ is a \df structure (i.e., its kernel is trivial). This follows easily from the fact that $B$ is sharp.
\end{remark}

\begin{proposition}\label{prop:local-root-stack}
Let $(A,L)$ be a \df structure on $X$, $j\colon A \arr B$ a system of denominators,
   \[
   \xymatrix{
   P \ar[r]^{j_{0}}\ar[d]^{h} &Q \ar[d]^{k}\\
   A(X) \ar[r]^{j(X)} & B(X)
   }
   \]
a chart for $B/A$. Let $L_{0}\eqdef L(X) \circ h\colon P \arr \div X$. Then there is a canonical equivalence $(X,L_{0})_{Q/P} \simeq (X, L)_{B/A}$ of fibered categories over $\catsch{X}$.
\end{proposition}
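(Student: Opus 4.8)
The plan is to produce, for every $X$-scheme $t\colon T \arr X$, an equivalence between the fiber $(t^{*}\circ L_{0})(Q/P)$ of $X_{Q/P}$ and the fiber $(t^{*}L)(t^{*}B/t^{*}A)$ of $X_{B/A}$, and then to observe that these equivalences are compatible with pullback along $X$-morphisms $T' \arr T$, so that together they assemble into an equivalence of fibered categories. Since every construction below is built from pullbacks, from the charts, and from the functor of Proposition~\ref{prop:chart->df}, all of which are natural in $T$, the compatibility with pullback will be routine; the content lies entirely in the construction on a single fiber, and in particular in one kernel computation.

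First I would build the comparison functor $\Phi_{T}\colon (t^{*}\circ L_{0})(Q/P) \arr (t^{*}L)(t^{*}B/t^{*}A)$. Start with an object $(M, \alpha)$, so $M\colon Q \arr \div T$ is a \sm functor and $\alpha\colon t^{*}L_{0} \simeq M \circ j_{0}$. As $Q$ is finitely generated, Proposition~\ref{prop:chart->df} associates with $M$ a \df structure $(A_{M}, L_{M})$ on $T$ with a chart $Q \arr A_{M}(T)$, where $A_{M} \simeq (Q_{T}/K_{M})\sh$ and $K_{M}$ is the kernel of the \sm functor $Q_{T} \arr \div_{T\et}$ induced by $M$. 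The crucial point is to identify this structure with the one coming from the system of denominators: I claim $K_{M} = t^{*}K_{B}$ as subsheaves of $Q_{T}$, where $K_{B}$ is the kernel of $Q_{X} \arr B$. Granting this, $A_{M} \simeq (Q_{T}/t^{*}K_{B})\sh \simeq t^{*}B$, because pullback of sheaves is a left adjoint and so preserves the cokernel presentation $B = (Q_{X}/K_{B})\sh$; then $L_{M}$ is the desired \sm functor $N\colon t^{*}B \arr \div_{T\et}$ (it has trivial kernel, as expected since $B$ is sharp).

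The identification $K_{M} = t^{*}K_{B}$ is the main obstacle, and I would prove it by comparing stalks. Fix a geometric point $\eta$ of $T$ over $\xi = t(\eta)$; since $Q_{T}$ is the constant sheaf $Q$, both $K_{M,\eta}$ and $(t^{*}K_{B})_{\eta} = K_{B,\xi}$ are submonoids of $Q$, and it suffices to show each equals the Kummer closure of $j_{0}(K_{A,\xi})$ in $Q$, i.e. the set of $q \in Q$ with $mq \in j_{0}(K_{A,\xi})$ for some $m > 0$ (where $K_{A}$ is the kernel of $P_{X} \arr A$). For $K_{B,\xi}$ this is exactly the stalk form of Lemma~\ref{lem:char-kernel-chart}. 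For $K_{M,\eta}$ I would argue as follows. Because $j_{0}\colon P \arr Q$ is Kummer, given $q$ there are $m > 0$ and $p \in P$ with $mq = j_{0}(p)$. Since $L$ has trivial kernel and $h$ is a chart, the kernel of $P_{X} \xarr{L_{0}} \div_{X\et}$ equals $K_{A}$, so the kernel of $t^{*}L_{0}$ at $\xi$ is $K_{A,\xi}$; hence via $\alpha$ the object $M(j_{0}(p))_{\eta} \simeq (t^{*}L_{0})(p)_{\eta}$ is invertible if and only if $p \in K_{A,\xi}$. On the other hand $M(j_{0}(p)) \simeq M(q)^{\otimes m}$, and a section of a line bundle is nowhere vanishing precisely when its $m\th$ tensor power is, so $M(j_{0}(p))_{\eta}$ is invertible if and only if $M(q)_{\eta}$ is. Combining, $q \in K_{M,\eta}$ iff $p \in K_{A,\xi}$ iff $mq \in j_{0}(K_{A,\xi})$. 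Two subsheaves of $Q_{T}$ with equal stalks everywhere coincide, so $K_{M} = t^{*}K_{B}$.

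It remains to supply $\beta\colon t^{*}L \simeq N \circ t^{*}j$, the inverse functor, and naturality. For $\beta$ I would use the universal property in Proposition~\ref{prop:chart->df} for the chart $P \arr (t^{*}A)(T)$: restricting $N \circ t^{*}j$ along it gives, by commutativity of the chart square, the composite $M \circ j_{0}$ up to canonical isomorphism, which $\alpha$ identifies with $t^{*}L_{0}$; this pins down a unique $\beta$. The inverse $\Psi_{T}$ sends $(N, \beta)$ to $(M, \alpha)$ with $M \eqdef N(T) \circ \bigl(Q \xarr{k} B(X) \arr (t^{*}B)(T)\bigr)$ and $\alpha$ obtained by restricting $\beta$ along $P \arr (t^{*}A)(T)$ and using the chart square. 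That $\Phi_{T}$ and $\Psi_{T}$ are quasi-inverse, and act on arrows correctly, follows from the uniqueness clauses in Proposition~\ref{prop:chart->df}, which determine both $N$ from $M$ and $M$ from $N$ up to canonical isomorphism. Finally, the naturality of all ingredients in $T$ upgrades the fiberwise equivalences to the asserted equivalence $(X,L_{0})_{Q/P} \simeq (X, L)_{B/A}$ of fibered categories over $\catsch{X}$.
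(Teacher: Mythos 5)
Your proposal is correct and follows essentially the same route as the paper's proof: one functor is given by restriction along the chart, and the quasi-inverse is obtained by extending a \sm functor $Q \arr \div T$ to a \df structure on $t^{*}B \simeq (Q_{T}/K_{B})\sh$ via Proposition~\ref{prop:chart->df}, the key point in both arguments being that the kernel of the induced functor $Q_{T} \arr \div_{T\et}$ coincides with (the pullback of) $K_{B}$, with the isomorphism over $t^{*}A$ then pinned down by the universal property of Proposition~\ref{prop:chart->df}. Your stalkwise Kummer/$m$\th-power computation simply fills in the kernel identification that the paper dispatches as ``easy to see'' from Lemma~\ref{lem:char-kernel-chart}.
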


\begin{proof}
It is easy to construct a cartesian functor $X_{B/A} \arr X_{Q/P}$. Let $t\colon T \arr X$ be a morphism of schemes; we denote by $t^{*}h\colon P \arr (t^{*}A)(T)$ the composite of $h\colon P \arr A(X)$ with the natural pullback homomorphism $t^{*}\colon A(X) \arr (t^{*}A)(T)$, and analogously for $t^{*}k\colon
Q\arr (t^{*}B)(T)$.

Suppose that $(M, \alpha)$ is an object of $X_{B/A}(T) = (t^{*}L)(t^{*}B/t^{*}A)$. Then $\alpha(T)$ is an isomorphism between the functors $t^{*}L(T)$ and $M(T) \circ t^{*}j(T)$, which go from $t^{*}A(T)$ to $\div T$; then $\alpha(T) \circ t^{*}h$ is an isomorphism from $t^{*}L(T) \circ t^{*}h$ to $M(T) \circ t^{*}j(T) \circ t^{*}h = M(T)\circ t^{*}k \circ j_{0}$; hence $(M(T)\circ t^{*} k, \alpha(T) \circ t^{*} h)$ is an object of $X_{Q/P}(T) = (t^{*} \circ L_{0})(Q/P)$. This construction extends naturally to a cartesian functor $X_{B/A} \arr X_{Q/P}$.

To go in the other direction, start from an object $(M_{0}, \alpha_{0})$ of $X_{Q/P}(T) = (t^{*} \circ L_{0})(Q/P)$. Denote by $K_{A}$ and $K_{B}$ the presheaf kernels of the morphisms $P_{T} \arr A$ and $Q_{T} \arr B$ induced by $t^{*}h$ and $t^{*}k$ respectively. From the characterization of Lemma~\ref{lem:char-kernel-chart}, it is easy to see that $K_{B}$ is also the kernel of the \sm functor $Q_{T} \arr \div_{T\et}$ induced by $M_{0}$. Consider the \df structure $M\colon t^{*}B = (Q_{T}/K_{B})\sh \arr \div_{T\et}$ induced by $M_{0}$ (Proposition~\ref{prop:chart->df}). Call $M_{A}$ the restriction of $M$ to $t^{*}A$, i.e., the composite $t^{*}A \xarr{t^{*}j} t^{*}B \xarr{M} \div_{T\et}$; then the composite $P \xarr{j_{0}} Q \xarr{t^{*}k} B(T) \xarr{M(T)} \div T$ factors through $M_{A}(T)\colon A(T) \arr \div T$. From the isomorphism $t^{*}\alpha_{0}\colon t^{*}L_{0} \simeq t^{*}
j\circ M_{0}$ and the restriction to 
$P(T)$ of the given isomorphism
$M(T)\circ t^{*}k\simeq M_0$, we obtain an isomorphism $\alpha\colon t^{*}L \simeq M \circ j$, because of the functoriality statement in Proposition~\ref{prop:chart->df}. The pair $(M, \alpha)$ is an object of $X_{B/A}(T)$.

We leave it to the reader to define a morphism of fibered categories $X_{Q/P} \arr X_{B/A}$ that associates $(M, \alpha)$ with $(M_{0}, \alpha_{0})$, and check that this yields a quasi-inverse to the morphism $X_{B/A} \arr X_{Q/P}$ defined above.
\end{proof}

\begin{proposition}
Let $(A,L)$ be a \df structure on $X$, $j\colon A \arr B$ a system of denominators. Then the fibered category $X_{B/A}$ is a finite and finitely presented algebraic stack over $X$. It is tame, in the sense of\/ \cite{AOV}. 

Furthermore, assume that for each geometric point $x\colon \spec\Omega \arr X$, the order of the quotient $B_{x}\gp/A_{x}\gp$ is prime to the characteristic of $\Omega$. Then $X_{B/A}$ is a Deligne--Mumford stack.
\end{proposition}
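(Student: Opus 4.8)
The plan is to reduce to an explicit local model and then read the desired properties off the quotient presentation of Remark~\ref{rmk:describe-root-stacks-2}. Every assertion---being an algebraic stack, finite presentation, finiteness over $X$, tameness, and (under the extra hypothesis) the Deligne--Mumford property---is local on $X$ in the \'etale topology and stable under base change. Hence, using the existence of compatible charts for a system of denominators (Proposition~\ref{prop:compatible-charts}), I may assume that $B/A$ admits a global chart, i.e.\ a Kummer homomorphism $j_{0}\colon P \arr Q$ of finitely generated monoids together with charts $P \arr A(X)$ and $Q \arr B(X)$ compatible with $A \subseteq B$, chosen so that $P \simeq A_{x}$ and $Q \simeq B_{x}$ at a fixed geometric point $x$. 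By Proposition~\ref{prop:local-root-stack} this gives $X_{B/A} \simeq X_{Q/P}$, and by Proposition~\ref{prop:describe-root-stacks-locally} we have $X_{Q/P} \simeq X \times_{\szp P} \szp Q$; so it suffices to prove the statement for the morphism $\szp Q \arr \szp P$ induced by $j_{0}$, and then base change along $X \arr \szp P$.

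The two elementary consequences of the Kummer hypothesis that drive the argument are the following. First, since $Q$ is generated by finitely many $q_{i}$ with $m_{i}q_{i} \in P$ for suitable $m_{i}>0$, the element $x^{q_{i}}$ is integral over $\ZZ[P]$; thus $\ZZ[P] \arr \ZZ[Q]$ is finite, and finitely presented by R\'edei's theorem, so $\sz Q \arr \sz P$ is finite and finitely presented. Second, by Lemma~\ref{lem:group-Kummer} the map $P\gp \arr Q\gp$ is injective with finite cokernel $C \eqdef Q\gp/P\gp$, so its dual $\widehat{Q} \arr \widehat{P}$ is faithfully flat with finite diagonalizable kernel $\underhom(C, \gm)$ of order $|C| = |B_{x}\gp/A_{x}\gp|$. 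In particular $\szp Q$ and $\szp P$ are quotients of affine finite-type schemes by diagonalizable (flat, affine, finitely presented) group schemes, so $X_{Q/P} \simeq X \times_{\szp P} \szp Q$ is an algebraic stack, finitely presented over $X$.

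For finiteness, tameness and the Deligne--Mumford property I would analyze the inertia. As $X$ has trivial inertia, the inertia of $X_{Q/P}$ is the base change of the relative inertia of $\szp Q \arr \szp P$, so it is enough to control the latter. A geometric point of $\szp Q$ over a field $\Omega$ is a monoid homomorphism $\chi\colon Q \arr (\Omega, \cdot)$, and its stabilizer in $\widehat{Q}$ is $\underhom(Q\gp/H, \gm)$, where $H \subseteq Q\gp$ is generated by $\{\iota_{Q}q : \chi(q) \neq 0\}$; the relative stabilizer over $\szp P$ is the kernel of the map to the analogous stabilizer in $\widehat{P}$, which is dual to $Q\gp/(H + P\gp)$, a quotient of the finite group $C$. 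Thus $\szp Q \arr \szp P$ has finite relative inertia, and all geometric stabilizers are subquotients of the diagonalizable group $\widehat{Q}$, hence diagonalizable and linearly reductive; by \cite{AOV} the stack is tame. Combined with the properness of $\szp Q \arr \szp P$ coming from the finiteness of $\ZZ[P] \arr \ZZ[Q]$, this shows that $X_{B/A} \arr X$ is proper, quasi-finite and finitely presented with finite coarse moduli space, i.e.\ finite in the sense appropriate to tame stacks. Finally, each relative stabilizer $\underhom(G, \gm)$, with $G$ a subquotient of $C$, is \'etale over $\Omega$ exactly when $|G|$ is invertible in $\Omega$; under the hypothesis that $|C| = |B_{x}\gp/A_{x}\gp|$ is prime to $\operatorname{char}\Omega$, all stabilizers are \'etale, and a tame stack with \'etale stabilizers is Deligne--Mumford.

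The one genuinely substantive step is the finiteness of the relative inertia of $\szp Q \arr \szp P$: this is exactly where the Kummer condition is used, through the finiteness of $C = Q\gp/P\gp$ from Lemma~\ref{lem:group-Kummer}, and the identification of the stabilizers as diagonalizable group schemes is what feeds the tameness and Deligne--Mumford criteria of \cite{AOV}. The remaining points---algebraicity, finite presentation, properness, and finiteness of the coarse space---are formal consequences of the affine quotient presentation of Remark~\ref{rmk:describe-root-stacks-2} and the finiteness of $\ZZ[P] \arr \ZZ[Q]$.
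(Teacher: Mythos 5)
Your proposal is correct, and it follows the paper's reduction exactly up to the identification $X_{B/A}\simeq X\times_{\szp{P}}\szp{Q}$: \'etale-local compatible charts, Proposition~\ref{prop:local-root-stack}, Proposition~\ref{prop:describe-root-stacks-locally}, finiteness of $\ZZ[P]\arr\ZZ[Q]$ from the Kummer condition, and Lemma~\ref{lem:group-Kummer}. Where you genuinely diverge is the endgame. The paper base-changes along the fppf cover $\sz{P}\arr\szp{P}$ and uses the cartesian square $[\sz{Q}/G]\simeq \sz{P}\times_{\szp{P}}\szp{Q}$, where $G=\ker(\widehat{Q}\arr\widehat{P})$ is finite diagonalizable: fppf-locally on $X$ the root stack is then a quotient of a scheme finite over the base by $G$, and finiteness, tameness, and (when $G$ is smooth) the Deligne--Mumford property are read off in one stroke. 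You instead compute the relative inertia directly, identifying the geometric stabilizers as $\underhom\bigl(Q\gp/(H+P\gp),\gm\bigr)$, duals of quotients of $C=Q\gp/P\gp$, and then invoke the linearly-reductive-stabilizer characterization of tameness from \cite{AOV} and the unramified-diagonal criterion for DM. Your route buys an explicit formula for the stabilizer at every geometric point, which the paper's presentation leaves implicit, at the cost of using the harder direction of the tameness characterization, where the paper only needs the easy fact that such quotient stacks are tame.

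Two points to tighten. First, finiteness of the relative inertia does not follow from fiberwise finiteness alone; but your own setup supplies it: any relative automorphism maps to the identity in $\widehat{P}$, so the relative inertia is a closed subgroup scheme of the pullback of the finite group scheme $\underhom(C,\gm)$, hence finite --- and the properness of $\szp{Q}\arr\szp{P}$ is itself most cleanly justified by the paper's cartesian square, since properness is fppf-local on the target. Second, for the DM statement your hypothesis gives invertibility of $|C|$ only in the residue field of the fixed point $x$; to make all stabilizers \'etale over the chart neighborhood $U$ you must shrink $U$ to the open subscheme where $|C|$ is invertible (which still contains $x$), exactly as the paper does when it assumes the order of $Q\gp/P\gp$ prime to all residue characteristics. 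Alternatively, one can check that the stabilizer over a point lying above a geometric point $x'$ of $U$ is dual to a quotient of $B_{x'}\gp/A_{x'}\gp$, so the pointwise hypothesis of the proposition suffices without shrinking; either way the step deserves a sentence.
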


\begin{proof}
The fact that $X_{B/A}$ is a stack follows from standard arguments of descent theory, and is omitted.

To check the other conditions is a local question in the étale topology over $X$; hence we may assume that there is a chart
   \[
   \xymatrix{
   P \ar[d]\ar[r] & Q \ar[d]\\
   A(X) \ar[r]    & B(X)\,.
   }
   \]
Furthermore, if the order of the quotient $B_{x}\gp/A_{x}\gp$ is prime to the characteristic of $\Omega$ for each geometric point $x\colon \spec\Omega \arr X$, we may assume that the order of the finite group $Q\gp/P\gp$ is everywhere prime to the characteristic of each of the residue fields of $X$.

Call $G$ the kernel of the surjective homomorphism $\widehat{\jmath}\colon \widehat{Q} \arr \widehat{P}$ induced by $j$; it is a finite diagonalizable group, the Cartier dual of the finite group $Q\gp/P\gp$. It is smooth if the condition on the characteristic is verified. We have a cartesian diagram
    \[
   \xymatrix{
   [\spec \ZZ[Q]/G] \ar[r]\ar[d] &\szp{Q} \ar[d]\\
   \sz{P} \ar[r] &\szp{P}\,,
   }
   \]
which says that fppf locally on $X$ the stack $X_{Q/P}$ is a quotient by an action of $G$ over a scheme which is finite over $X$ (since $\ZZ[Q]$ is a finite extension of $\ZZ[P]$). This shows that it is finite and tame, and Deligne--Mumford if the conditions on the characteristic are verified.
\end{proof}

\section{Parabolic sheaves}\label{sec:parabolic}

\subsection{Categories of weights}

\begin{definition}
\label{def:weight-category}
	Given a monoid $A$, let $A\wt$ be the strict \sm category whose objects are elements of $A\gp$, and arrows $a\colon u\rightarrow v$ are elements $a$ of $A$ such that $u + \iota_{A}a = v \in A\gp$. The monoidal structure is given by the operations in $A\gp$ (for the objects) and $A$ (for the arrows).
\end{definition}

Notice that if $A$ is integral, $A\wt$ is a partially ordered set (that is, there is at most one arrow between any two objects of $A\wt$).

There is a natural \sm functor $A\rightarrow A\wt$, given at the level of objects by the function $\iota_{A}\colon A \arr A\gp$.

\begin{proposition}\label{prop:universal-to-pic}
Given a monoid $A$, a scheme $X$ and a \sm functor $L\colon A \arr \div X$, there exists a \sm functor $L\wt : A\wt \arr \pic X$, and a monoidal $2$-isomorphism $\Phi$
   \[
	\xymatrix@R=12pt{
	A \ar[d]_{L}	 \ar[r] & A\wt \ar[d]^{L\wt}\\
    \div X       \ar[r] \ar@{=>}[ur]^\Phi & \pic X }
   \]
such that for all $a \in A$ the diagram
	\[
	\xymatrix@R=12pt{
	& L_a \ar[r]^-{\Phi(a)} &       L\wt(a)\\
	\mathcal O_X\ar[ur]^{\sigma_a^L}\ar[dr]_{\epsilon^L}&&       \\
	& L_0  \ar[r]^-{\Phi(0)}&       
	L\wt(0)\ar[uu]_{L\wt(a)}}
	\]
 commutes.
 
Furthermore $L\wt$ and $\Phi$ are unique up to a unique isomorphism.
\end{proposition}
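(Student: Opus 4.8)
The plan is to build $L\wt$ by group-completing $L$ on objects and by sending the (generally non-invertible) arrows of $A\wt$ to multiplication-by-section maps in $\pic X$. Writing $L(a)=(L_{a},\sigma^{L}_{a})$ as in Remark~\ref{rmk:conventions-symmetric-monoidal}, I would set, for an object $u=\iota_{A}a-\iota_{A}b$ of $A\wt$,
\[
L\wt(u)\eqdef L_{a}\otimes L_{b}^{\vee},
\]
which is exactly the group completion $L\gp$ already used in the proof of Proposition~\ref{prop:stack-of-charts}. The crucial new ingredient is the value on arrows: an arrow $c\colon u\arr v$ of $A\wt$ is an element $c\in A$ with $v=u+\iota_{A}c$, and I would send it to the morphism $L\wt(c)\colon L\wt(u)\arr L\wt(v)$ obtained by tensoring the identity of $L_{b}^{\vee}$ with the ``multiplication by $\sigma^{L}_{c}$'' map
\[
L_{a}\xarr{\sigma^{L}_{c}\otimes\id}L_{c}\otimes L_{a}\xarr{\mu^{L}}L_{c+a}=L_{a+c}.
\]
This is precisely why the target must be $\pic X$ and not the groupoid $\div X$: the section $\sigma^{L}_{c}$ need not be nowhere vanishing, so this map is typically not an isomorphism, and the arrows of $A\wt$ are exactly what record these non-invertible multiplications.

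Next I would verify that this is a well-defined \sm functor. Well-definedness of $L\wt$ on objects, up to canonical isomorphism, follows from the coherence isomorphisms $\mu^{L}$ in the same way that the $\tau_{a,b}$ of Proposition~\ref{prop:factor-predf} were shown to be canonical: if $\iota_{A}a-\iota_{A}b=\iota_{A}a'-\iota_{A}b'$ there is $e\in A$ with $a+b'+e=a'+b+e$ in $A$, and the $\mu^{L}$'s yield a canonical isomorphism $L_{a}\otimes L_{b}^{\vee}\simeq L_{a'}\otimes L_{b'}^{\vee}$ independent of $e$. The same coherence shows that the arrow maps $L\wt(c)$ are independent of the chosen presentation of $u$, that composition of arrows (which is addition in $A$) is respected because $\mu^{L}$ is associative, and that the isomorphisms $L\wt(u)\otimes L\wt(u')\simeq L\wt(u+u')$ coming from $\mu^{L}$ make $L\wt$ symmetric monoidal. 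I would then define $\Phi$ by the canonical isomorphisms $\Phi(a)\colon L_{a}\simeq L_{a}\otimes L_{0}^{\vee}=L\wt(\iota_{A}a)$; the normalization diagram involving $\sigma^{L}_{a}$ and $\epsilon^{L}$ then commutes by construction, since $L\wt$ applied to the arrow $a\colon 0\arr\iota_{A}a$ is by definition the multiplication-by-$\sigma^{L}_{a}$ map.

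For uniqueness, suppose $(L\wt{}',\Phi')$ is another such pair. On the objects $\iota_{A}a$ in the image of $A$, compatibility with $\Phi$ and $\Phi'$ forces the comparison isomorphism to be $\Phi'(a)\circ\Phi(a)^{-1}\colon L\wt(\iota_{A}a)\simeq L\wt{}'(\iota_{A}a)$; since every object of $A\wt$ is of the form $\iota_{A}a-\iota_{A}b$ and the functors are monoidal, this extends in a unique way to a monoidal isomorphism on all of $A\gp$. The normalization condition then guarantees that this monoidal isomorphism is automatically natural with respect to the translation arrows $c\colon u\arr u+\iota_{A}c$, because both $L\wt(c)$ and $L\wt{}'(c)$ are multiplication by $\sigma^{L}_{c}$ under their respective identifications. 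Hence the comparison isomorphism exists and is the unique one compatible with $\Phi,\Phi'$.

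The main obstacle I anticipate is bookkeeping rather than conceptual: checking that the group completion on objects and the section-multiplication maps on arrows are genuinely independent of all the choices of presentations $u=\iota_{A}a-\iota_{A}b$, and that they assemble into a functor respecting composition and the monoidal constraints. All of this is an exercise in Mac Lane's coherence theorem applied to the associativity, unit and symmetry isomorphisms $\mu^{L},\epsilon^{L},\sigma$; the argument is the natural elaboration of the $\nu_{k,a}$/$\tau_{a,b}$ computation in the proof of Proposition~\ref{prop:factor-predf}, and I would organize it so that coherence is invoked once and the remaining diagrams commute formally.
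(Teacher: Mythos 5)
Your proposal is correct and takes essentially the same route as the paper: the paper also defines the extension on objects by $L_{a}\otimes L_{b}^{\vee}$ and on arrows by tensoring with the section $\sigma^{L}_{c}$ via the coherence isomorphisms $\mu^{L}$, the only difference being organizational --- it handles the presentation-independence bookkeeping you describe by factoring through the auxiliary category $A\dwt$ of pairs $(a,b)$ (monoidally equivalent to $A\wt$), where the object assignment $(a,b)\arrto L_{a}\otimes L_{b}^{\vee}$ involves no choices, and then composes with a monoidal quasi-inverse $A\wt \arr A\dwt$, which amounts exactly to your chosen presentations $u=\iota_{A}a-\iota_{A}b$. Your uniqueness argument (forced comparison on the image of $A$, monoidal extension to $A\gp$, naturality via the normalization diagram) fills in correctly what the paper dismisses as ``easily proved.''
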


\begin{proof}

The uniqueness statement is easily proved, so we concentrate on constructing a solution $(L\wt,\Phi)$. 

First, consider the category $A\dwt$ whose objects are elements of $A\times A$ and whose arrows $c: (a,b)\rightarrow (a',b')$ are elements $c$ of $A$ such that $\iota_{A}(a+b'+c) = \iota_{A}(a'+b)$. There is a natural functor $A\dwt\rightarrow A\wt$ sending the object $(a,b)$ to the object $a-b$, and this is clearly a monoidal equivalence. The functor $A \arr A\wt$ factors as $A \arr A\dwt \arr A\wt$, where $A \arr A\dwt$ is defined by $a \arrto (a, 0)$; thus it is enough to produce a functor $L\dwt\colon A\dwt \arr \pic X$, together with an isomorphism of the composites $A \arr A\dwt \xarr{L\dwt} \pic X$ and $A \arr \div X \arr \pic X$, such that for each $a \in A$ the diagram
	\[
	\xymatrix@R=12pt{
	& L_a \ar[r]^-{\Phi(a)} &       L\dwt(a,0)\\
	\mathcal O_X\ar[ur]^{\sigma_a^L}\ar[dr]_{\epsilon^L}&&       \\
	& L_0  \ar[r]^-{\Phi(0)}&       
	L\dwt(0,0)\ar[uu]_{L\dwt(a)}}
	\]
commutes. Then we can define $L\wt$ by composing $L\dwt$ with a monoidal quasi-inverse $A\wt \arr A\dwt$ of the equivalence $A\dwt \arr A\wt$.

At the level of objects, we define $L\dwt$ by the obvious rule $L\dwt(a,b) = L_{a} \otimes L_{b}^{\vee}$. Given an arrow $c\colon (a, b) \arr (a', b')$, there is an element $d \in A$ such that $a + b' + c + d = a' + b + d$, we get isomorphisms 
   \begin{align*}
   L_{a} \otimes L_{b'} \otimes L_{c} \otimes L_{d}  & \simeq L_{a+b'+c+d}\\
   &= L_{a'+b+d}\\
   & \simeq L_{a'} \otimes L_{b} \otimes L_{d}\,,
   \end{align*}
hence an isomorphism
   \[
   L_{a} \otimes L_{b'} \otimes L_{c}
   \simeq
   L_{a'} \otimes L_{b},
   \]
which yields a homomorphism $L_{a} \otimes L_{b'} \arr L_{a'} \otimes L_{b}$, by sending a section $s$ of $L_{a} \otimes L_{b'}$ into the section of $L_{a'} \otimes L_{b}$ corresponding to $s \otimes \sigma_{c}^{L}$. This in turn yields a homomorphism
   \[
   L\dwt(c)
   \colon L_{a} \otimes L_{b}^{\vee} \arr L_{a'} \otimes L_{b'}^{\vee}.
   \]
The verification that the operation of $L\dwt$ on arrows preserves composition is long but straightforward. Hence we obtain the desired functor $L\dwt\colon A\dwt \arr \pic X$.

For each $a \in A$, the isomorphism $\Phi(a)\colon L_{a} \simeq L\dwt(a,0) \eqdef L_{a} \otimes L_{0}^{\vee}$ comes from the isomorphism $\epsilon^{L}\colon L_{0} \simeq \cO_{X}$. It is immediate to check that the diagram above is commutative. This completes the proof.
\end{proof}

\begin{remark}
Conversely, given a \sm functor $M\colon A\wt \arr \pic X$, any $a \in A$ defines an arrow $0 \arr a$ in $A\wt$; this yields an arrow $\cO_{X} \simeq M(0) \arr M(a)$ in $\pic X$. If we set $L_{a} = M(a)$ and call $s_{a}$ the section of $L_{a}$ corresponding to the morphism $\cO_{X} \arr L_{a}$ just defined, we obtain a monoidal functor $L\colon A \arr \div X$, such that $L\wt \simeq M$. In this way we obtain an equivalence of categories between \sm functors $A \arr \div X$ and \sm functors $A\wt \arr \pic X$.
\end{remark}

This construction generalizes to sheaves. Let $X$ be a scheme, $A$ be a sheaf of monoids on $X\et$. We denote by $A\wt \arr X\et$ the fibered category defined as follows. The objects are pair $(U, u)$, where $U \arr X$ is an étale morphism and $u\in A\gp(U)$. The arrows from $(U, u)$ to $(V, v)$ are pairs $(f, a)$, where $f\colon U \arr V$ is a morphism of $X$-schemes and $a$ is an element of $A(U)$ such that $u + \iota_{A}(a) = f^{*}v \in A\gp(U)$. Composition is defined by addition and pullback: if $(f, a)\colon (U, u) \arr (V,v)$ and $(g, b)\colon (V,v) \arr (W,w)$ are arrows, the composite is defined as
   \[
   (g,b) \circ (f, a) \eqdef (gf, f^{*}b + a).
   \]

\begin{proposition}\label{prop:universal-to-pic-stack}
	Given a sheaf of monoids $A$ on a scheme $X$ and a \sm functor $L\colon A \arr \div_{X\et}$, there exists a \sm functor $L\wt : A\wt \arr \pic_{X\et}$, and a monoidal cartesian $2$-isomorphism $\Phi$:

 \[
	\xymatrix@R=12pt{
	A \ar[d]_{L}	 \ar[r] & A\wt \ar[d]^{L\wt}\\
    \div_{X\et}      \ar[r] \ar@{=>}[ur]^\Phi & \pic_{X\et} }
\]
such that for all étale morphism $U \arr X$ and $a \in A(U)$ the following diagram commutes :

\[
\xymatrix@R=12pt{
& L_a \ar[r]^{\Phi(a)} &       L\wt(a)\\
\mathcal O_U\ar[ur]^{\sigma_a^L}\ar[dr]_{\epsilon^L_U}&&       \\
& L_0  \ar[r]_{\Phi(0)}&       L\wt(0)\ar[uu]_{L\wt(a)}}
\]

Furthermore $L\wt$ and $\Phi$ are unique up to a unique monoidal cartesian $2$\dash isomorphism.
	
\end{proposition}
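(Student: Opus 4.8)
The plan is to globalize the construction of Proposition~\ref{prop:universal-to-pic}, using étale descent together with the fact that $\pic_{X\et}$ is a \sm stack. The uniqueness assertion reduces, fiber by fiber over $X\et$, to the uniqueness already established in Proposition~\ref{prop:universal-to-pic} combined with the same descent principle, so I would concentrate on the existence of a pair $(L\wt, \Phi)$.

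First I would introduce the fibered analogue $A\dwt \arr X\et$ of the auxiliary category used in the monoid case. Its objects over an étale $U \arr X$ are pairs $(a,b)$ with $a, b \in A(U)$, and an arrow $(U, (a,b)) \arr (V, (a', b'))$ is a pair $(f, c)$, where $f\colon U \arr V$ is an $X$-morphism and $c \in A(U)$ satisfies $\iota_{A}(a + f^{*}b' + c) = \iota_{A}(f^{*}a' + b)$ in $A\gp(U)$. Sending $(a,b) \arrto \iota_{A}(a) - \iota_{A}(b)$ defines a cartesian \sm functor $q\colon A\dwt \arr A\wt$. A direct comparison of the two arrow sets shows that $q$ is fully faithful; moreover $q$ is \emph{locally} essentially surjective, since any section $u \in A\gp(U)$ is, after passing to a suitable étale cover $\{U_{i} \arr U\}$, of the form $\iota_{A}(a_{i}) - \iota_{A}(b_{i})$ with $a_{i}, b_{i} \in A(U_{i})$, this being exactly the definition of $A\gp$ as a sheafification.

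Next I would define a cartesian \sm functor $L\dwt\colon A\dwt \arr \pic_{X\et}$ exactly as in the proof of Proposition~\ref{prop:universal-to-pic}, now carried out over each $U$: on objects $L\dwt(a,b) \eqdef L_{a} \otimes L_{b}^{\vee}$, and on an arrow $(\id, c)$ by the homomorphism built from $\mu^{L}$ and the tautological section $\sigma^{L}_{c}$, the compatibility with pullbacks being automatic because $L$ is cartesian. Since $q$ is fully faithful and locally essentially surjective and $\pic_{X\et}$ is a stack, $L\dwt$ descends along $q$ to the desired $L\wt\colon A\wt \arr \pic_{X\et}$: for an object $u \in A\gp(U)$ choose a cover trivialising $u$ as above, set $L\wt(u)|_{U_{i}} \eqdef L\dwt(a_{i}, b_{i})$, and glue these invertible sheaves along the transition isomorphisms obtained by applying $L\dwt$ to the unique arrows of $A\dwt$ lying over the identity of $u$ that full faithfulness of $q$ provides. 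The cocycle condition follows from functoriality of $L\dwt$, and the stack property of $\pic_{X\et}$ produces $L\wt(u)$, independent of all choices up to canonical isomorphism; the monoidal structure of $L\dwt$ descends in the same way to a \sm structure on $L\wt$. Finally $\Phi$ is produced from $\epsilon^{L}$: the isomorphism $\Phi(a)\colon L_{a} \simeq L_{a} \otimes L_{0}^{\vee} = L\dwt(a, 0)$ identifies $L_{a}$ with $L\wt(\iota_{A}a)$, and the required triangle relating $\sigma^{L}_{a}$, $\epsilon^{L}$ and the image under $L\wt$ of the tautological arrow $0 \arr a$ commutes by the same computation as in the monoid case.

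The main obstacle is precisely this descent step: one must verify that the locally defined invertible sheaves $L\dwt(a_{i}, b_{i})$, together with the transition isomorphisms coming from the sections $\sigma^{L}_{c}$, satisfy the cocycle condition and are independent of the chosen local presentations of $u$, and that the resulting $L\wt$ is genuinely functorial on the arrows of $A\wt$ and cartesian over $X\et$. All of these checks are formal consequences of the compatibilities already recorded for $L\dwt$ in the monoid setting, but they are exactly where the hypothesis that $\pic_{X\et}$ is a stack is used. The uniqueness of $(L\wt, \Phi)$ up to a unique monoidal cartesian $2$-isomorphism then follows by combining the fiberwise uniqueness of Proposition~\ref{prop:universal-to-pic} with the same gluing argument.
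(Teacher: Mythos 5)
Your proposal is correct, but it is organized quite differently from the paper's proof, which disposes of the proposition in two sentences: it applies Proposition~\ref{prop:universal-to-pic} separately to each monoid $A(U)$ with the functor $L(U)\colon A(U) \arr \div U$, and invokes the uniqueness clause of that proposition to conclude that the fiberwise solutions are compatible with restriction and hence assemble into a global pair $(L\wt, \Phi)$. Note, however, that the fiberwise procedure produces functors defined on the categories $A(U)\wt$, whose objects are elements of $A(U)\gp$, whereas the objects of $A\wt$ over $U$ are sections of the \emph{sheafification} $A\gp$, which need not be global differences of sections of $A$; the paper leaves the resulting extension step implicit in the phrase ``hence define a global solution''. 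Your argument makes exactly this step explicit: you globalize the auxiliary category $A\dwt$ into a fibered category over $X\et$, observe that $q\colon A\dwt \arr A\wt$ is fully faithful and locally essentially surjective (the latter being precisely the definition of $A\gp$ as a sheafification), and descend $L\dwt$ along $q$ using the stack property of $\pic_{X\et}$, with transition isomorphisms coming from the arrows of $A\dwt$ that $q$ matches with identities. What your route buys is a self-contained treatment of the sheafification issue, at the cost of redoing the monoid-case construction in the fibered setting; the paper's route is shorter and reuses Proposition~\ref{prop:universal-to-pic} as a black box for both existence and uniqueness, but is silent on the passage from the presheaf $U \arrto A(U)\gp$ to $A\gp$.

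One small point to patch in your definition of $L\dwt$ on arrows: since your arrow condition $\iota_{A}(a + f^{*}b' + c) = \iota_{A}(f^{*}a' + b)$ is an equality in the \emph{sheaf} $A\gp$, the element $d$ with $a + f^{*}b' + c + d = f^{*}a' + b + d$ used in the proof of Proposition~\ref{prop:universal-to-pic} exists in general only after passing to an \'etale cover of $U$, so the construction is not literally ``carried out over each $U$''; you should define $L\dwt(f,c)$ locally on such a cover and glue, using independence of the choice of $d$ and the fact that homomorphisms between invertible sheaves form a sheaf. This is a routine verification of the same kind the paper itself labels ``long but straightforward'', and it does not affect the soundness of your approach.
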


\begin{proof}
To show the existence, we use Proposition \ref{prop:universal-to-pic} to produce,  for each étale morphism $U \arr X$, a solution $(L\wt(U),\Phi(U))$. The uniqueness statement in Proposition \ref{prop:universal-to-pic} shows that these solutions are compatible with restriction, hence define a global solution $(L\wt,\Phi)$.
\end{proof}

\begin{remark}
As before, we have an equivalence between the category of \sm functors $A \arr \div_{X}$ and \sm functors $A\wt \arr \pic_{X}$.
\end{remark}

\subsection{Parabolic sheaves}

Let $X$ be a scheme, $j\colon A \arr B$ a Kummer homomorphism of monoids, $L\colon A \arr \div X$ a \sm functor. We will always omit $j$ from the notation, and think of $A$ as a submonoid of $B$, and of $A\wt$ as a subcategory of $B\wt$.

Consider the extension $L\wt\colon A\wt \arr \pic X$ (Proposition~\ref{prop:universal-to-pic}); if $u \in A\wt$, for simplicity of notation we denote by $L_{u}$ the invertible sheaf $L\wt(u)$ image of $u$. If $u = \iota_{A}a$ for some $a \in A$, then $L\wt(u)$ is canonically isomorphic to $L_{a}$, and there should be no risk of confusion. Also, as usual when $a \in A$ we denote by $\sigma^{L}_{a}$ the corresponding section of $L_{a}$, so that $L(a) = (L_{a}, \sigma^{L}_{a})$.

If $u$ and $u'$ are in $A\wt$, we have a given isomorphism $L_{u+u'} \simeq L_{u}\otimes L_{u'}$, which we denote by $\mu^{L}_{u,v}$, or simply $\mu$, once again dropping the ${}\wt$ superscripts from the notation. Similarly, we denote by $\epsilon^{L}$, or $\epsilon$, the given isomorphism between $\cO_{X}$ and $L_{0}$.

\begin{definition}
	\label{def:para-sheaf}
A \emph{parabolic sheaf} $(E,\rho^E)$ on $(X, A, L)$ with denominators in $B/A$ consists of the following data.

\begin{enumeratea}

\item A functor $E\colon B\wt \arr \qcoh{X}$, denoted by $v \arr E_{v}$ at the level of objects, and by $b \arrto E_{b}$ at the level of arrows.

\item For any $u \in A\wt$ and $v \in B\wt$, an isomorphism of $\cO_{X}$-modules
   \[
   \rho^{E}_{u,v}\colon E_{u+v} \simeq L_{u} \otimes_{\cO_{X}} E_{v}\,,
   \]
which we will call \emph{pseudo-period isomorphisms}. If $a \in A$, we denote $\rho^{E}_{\iota_{A}a, v}$ by $\rho^{E}_{a,v}$.
\end{enumeratea}

These data are required to satisfy the following conditions. Let $u$, $u' \in A\wt$, $a \in A$, $b \in B$, $v \in B\wt$. Then the following diagrams commute.

\begin{enumeratei}

\item
   \[
   \xymatrix@C=50pt{
   E_{v} \ar[r]^-{E_{a}} \ar[d]^{\simeq}
      &E_{\iota_{A}a+v}\ar[d]^-{\rho^{E}_{a,v}}\\
   \cO_{X} \otimes E_{v} \ar[r]^-{\sigma^{L}_{a} \otimes \id_{E_{v}}}
      &L_{a}\otimes E_{v}
   }
   \]

\item
   \[
   \xymatrix @C = 40pt{
   E_{u+v} \ar[r]^-{\rho^{E}_{u,v}} \ar[d]_{E_{b}} 
   &L_{u} \otimes E_{v}
      \ar[d]^{\id_{L_{u}}\otimes E_{b}}\\
   E_{u+b+v} \ar[r]^-{\rho^{E}_{u,b+v}}  
   &L_{u} \otimes E_{b+v}
   }
   \]

\item
   \[
   \xymatrix@C=50pt{
   E_{u+u'+v} \ar[r]^-{\rho^{E}_{u+u', v}} \ar[d]^-{\rho^{E}_{u, u'+v}}
      &L_{u+u'} \otimes E_{v}
      \ar[d]^{\mu\otimes \id_{E_{v}}}\\
   L_{u} \otimes E_{u'+v} \ar[r]^-{\id_{L_{u}} \otimes \rho^{E}_{u',v}}
   & L_{u} \otimes L_{u'} \otimes E_{v}
   }
   \]

\item Finally, the composite
   \[
   \xymatrix@C=40pt{
   E_{v} = E_{0+v} \ar[r]^-{\rho^{E}_{0,v}}
   &L_{0} \otimes E_{v}\ar[r]^{\epsilon \otimes \id_{E_{v}}}
      &\cO_{X} \otimes E_{v}
   }
   \]
is the natural isomorphism $E_{v} \simeq \cO_{X} \otimes E_{v}$.

\end{enumeratei}

\end{definition}

\begin{remark}\label{rmk:cat-interpretation1}

This definition has the following high-level interpretation. There are natural functors ${+}\colon A\wt \times B\wt \arr B\wt$ (given by addition) and ${\otimes}\colon \pic X \times \qcoh X \arr \qcoh X$ (given by tensor product). These can be interpreted as action of the \sm categories $A\wt$ on $B\wt$ and of $\pic X$ on $\qcoh X$. Then the first two conditions mean that $\rho^{E}$ is an isomorphism of the composites $E\circ {+}$ and ${\otimes}\circ (L\wt \times E)$. The other two ensure that $E$ can be interpreted as an $A\wt$-equivariant functor.
	It is easy to check that the data of a parabolic sheaf on $(X, A, L)$ with denominators in  $B/A$ is equivalent to the data of a $A\wt$-morphism of modules categories $E\colon B\wt \rightarrow \qcoh X$ in the sense of \cite{Ost}, Definition 2.7.
\end{remark}

There is an abelian category $\qcoh_{X}(X, A, L)(Q/P)$ whose objects are quasi-coherent sheaves on $(X, A, L)$ with denominators in $Q/P$. An arrow $\Phi\colon E \arr E'$ is a natural transformation such that for all $u \in A\wt$ and $v \in B\wt$ the diagram
   \[
   \xymatrix @C=45pt{
   E_{u+v} \ar[r]^{\rho^{E}_{u,v}} \ar[d]^-{\Phi_{u+v}}
      & L_{u} \otimes E_{v} \ar[d]^{\id_{L_{u}} \otimes \Phi_{v}}\\
   E'_{u+v} \ar[r]^-{\rho^{E'}_{u,v}}& L_{u} \otimes E'_{v}
   }
   \]
commutes.

We will see that this category has tensor products and internal Homs.

There is also a sheafified version of the definition of parabolic sheaf.

\begin{definition}
	Let $X$ be a scheme, $(A, L)$ a coherent \df structure on $X$, $j\colon A \arr B$ a system of denominators.  A \emph{parabolic sheaf on $(X, A, L)$ with denominators in $B/A$} consists of the following data.

\begin{enumeratea}

\item A cartesian functor $E\colon B\wt \arr \qcoh_{X\et}$,  denoted by $v \arr E_{v}$ at the level of objects, and by $b \arrto E_{b}$ at the level of arrows.

\item For any $U \arr X$ in $X\et$, any $u \in A\wt(U)$ and $v \in B\wt(U)$, an isomorphism of $\cO_{U}$-modules
   \[
   \rho^{E}_{u,v}\colon E_{u+v} \simeq L_{u} \otimes E_{v}.
   \]

\end{enumeratea}

These data are required to satisfy the following conditions analogous to those of Definition~\ref{def:para-sheaf}, and the following.

If $f\colon U \arr V$ is an arrow in $X\et$, $u \in A\wt(V)$ and $v \in B\wt(V)$, then the isomorphism
   \[
   \rho^{E}_{f^{*}u,f^{*}v}\colon 
   E_{f^{*}(u+v)} = E_{f^{*}u + f^{*}v} \simeq L_{f^{*}u} \otimes E_{f^{*}v}
   \]
is the pullback of $j^{E}_{u, v}\colon E_{u+v} \simeq L_{u} \otimes E_{v}$.
\end{definition}

\begin{remark}\label{rmk:cat-interpretation2}

This definition can also be interpreted as in Remark~\ref{rmk:cat-interpretation2}, substituting categories with fibered categories.
\end{remark}

There is an abelian category $\qcoh(X, A, L)(B/A)$ whose objects are quasi-coherent sheaves on $(X, A, L)$ with denominators in $B/A$. A homomorphism of parabolic sheaves is defined as in the case when $A$ and $B$ are fixed monoids.

\begin{proposition}\label{prop:local-parabolic}
Let $(A,L)$ be a \df structure on $X$, $j\colon A \arr B$ a system of denominators,
   \[
   \xymatrix{
   P \ar[r]^{j_{0}}\ar[d]^{h} &Q \ar[d]^{i}\\
   A(X) \ar[r]^{j(X)} & B(X)
   }
   \]
a chart for $B/A$. Let $L_{0}\eqdef L(X) \circ h\colon P \arr \div X$. Then there is a canonical equivalence of abelian categories of $\qcoh(X, A, L)(B/A)$ with $\qcoh(X, P, L_{0})(Q/P)$.
\end{proposition}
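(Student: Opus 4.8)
The plan is to mimic the proof of Proposition~\ref{prop:local-root-stack}, constructing a functor in each direction and showing they are mutually quasi-inverse: the forward functor $F$ is a \emph{restriction} along the chart, and the backward functor $G$ is an \emph{extension by descent}. Throughout I identify $P\wt$ with its image in $A\wt$ and $Q\wt$ with the image of the constant presheaf $Q_{X}$ in $B\wt$, using that the chart induces a \sm functor $Q\wt \arr B\wt$ on the fibre over $X$ (and likewise $P\wt \arr A\wt$), compatibly with $L_{0}=L(X)\circ h$ and with the extensions $L_{0}\wt$ (Proposition~\ref{prop:universal-to-pic}) and $L\wt$ (Proposition~\ref{prop:universal-to-pic-stack}).

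\textbf{The forward functor (easy direction).} Given a parabolic sheaf $(E,\rho^{E})$ on $(X,A,L)$ with denominators in $B/A$, I restrict the cartesian functor $E\colon B\wt \arr \qcoh_{X\et}$ along $Q\wt \arr B\wt$. For $v\in Q\gp$ I set $(FE)_{v}\eqdef E_{(X,\bar v)}$, the value of $E$ on the object $(X,\bar v)$ lying over $X$, where $\bar v\in B\gp(X)$ is the image of $v$; arrows $q\in Q$ and the pseudo-period isomorphisms $\rho^{E}_{\bar u,\bar v}$ for $u\in P\gp$ are restricted in the same way, the identification $L_{0,u}\simeq L_{\bar u}$ coming from $L_{0}=L(X)\circ h$. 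The four conditions of Definition~\ref{def:para-sheaf} for $FE$ are direct specializations of those for $E$, and $F$ is visibly additive and exact.

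\textbf{The backward functor and the main obstacle.} Given $(E',\rho^{E'})$ on $(X,P,L_{0})$ with denominators in $Q/P$, I must produce a cartesian functor $GE'\colon B\wt \arr \qcoh_{X\et}$. Since $B$ is the sheafification of $Q_{X}/K_{B}$, any object $(U,u)$ of $B\wt$ admits, after an étale cover $\{U_{i}\arr U\}$, lifts $\tilde u_{i}\in Q\gp$ of $u|_{U_{i}}$, and I set $(GE')_{(U,u)}|_{U_{i}}$ to be the pullback of $E'_{\tilde u_{i}}$ to $U_{i}$. The genuine difficulty---the step I expect to be the main obstacle---is the well-definedness of this extension: on an overlap two lifts differ by an element $w$ lying, at each geometric point $x$, in $\ker(Q\gp\arr B_{x}\gp)=\langle K_{B,x}\rangle$, and I must supply a canonical gluing isomorphism $E'_{\tilde u_{i}}\simeq E'_{\tilde u_{j}}$ and verify the cocycle condition. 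The mechanism I expect to use is the following. By Lemma~\ref{lem:char-kernel-chart} each generator $q\in K_{B,x}$ satisfies $mq\in K_{A}$ for some $m>0$; for such $mq$ the object $L_{0,mq}$ is canonically trivial with nowhere-vanishing section (because $L$ has trivial kernel), so axiom~(i) of Definition~\ref{def:para-sheaf} makes $E'_{mq}$ an isomorphism near $x$. Then a periodicity argument---using that the composite of $m$ consecutive arrows $E'_{q}$ is the isomorphism $E'_{mq}$, that the pseudo-periods $\rho^{E'}_{mq,-}$ identify $E'_{v}$ with $E'_{v+\iota(mq)}$ compatibly with these arrows, and that $B$ is sharp---forces each single arrow $E'_{q}$ with $q\in K_{B,x}$ to be an isomorphism étale-locally. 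Composing these isomorphisms and their inverses yields the required gluing datum attached to $w$; independence of the factorization of $w$ and the cocycle condition follow from axioms~(ii)--(iv). The stackiness of $\qcoh_{X\et}$ then assembles $GE'$ into a cartesian functor on $B\wt$, and the pseudo-periods $\rho^{E'}$ transport to $\rho^{GE'}$ by the same local recipe.

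\textbf{Conclusion.} Finally I check that $F$ and $G$ are quasi-inverse. The isomorphism $F\circ G\simeq\id$ holds because evaluating the extension on objects already over $X$, using their tautological lifts in $Q\gp$, returns $E'$ together with its pseudo-periods; and $G\circ F\simeq\id$ holds because a cartesian functor on $B\wt$ is determined by descent from its restriction to the charts $Q\wt$, which is exactly how $G$ was defined as the unique cartesian extension. Both functors are additive and commute with kernels and cokernels, computed objectwise in $\qcoh$, hence are exact, giving the asserted equivalence of abelian categories. The only delicate point is the gluing in the backward direction described above; everything else is a routine transport of the structure across the chart.
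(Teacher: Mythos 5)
Your proposal is correct and takes essentially the same route as the paper: the forward functor is restriction along the chart, and the crux is exactly the paper's key lemma that the arrows $E'_{k}$ indexed by $k \in K_{B}$ are invertible, proved as you do via $mk \in K_{A}$ (Lemma~\ref{lem:char-kernel-chart}), triviality of the kernel of $L$, and axiom~(i) of Definition~\ref{def:para-sheaf}, with left and right invertibility extracted from the factorization of $E'_{mk}$ through $E'_{k}$. Your explicit construction of the extension by choosing local lifts in $Q\gp$ and gluing via these inverted arrows is a hands-on rendering of the paper's argument, which packages the same data as a factorization through the localized category $(B\pre)\dwt$ followed by stackification, using that $\qcoh_{X\et}$ is a stack.
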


\begin{proof}
We begin by the obvious definition of the equivalence: at the level of objects, if $(E,\rho^E)$ is a parabolic sheaf on $(X, A, L)$ with denominators in $B/A$, we can associate with it a parabolic sheaf with denominators in $Q/P$: $(E(X)\circ i,\rho^E(X)\circ(h\times i))$, and it is also clear how to define the functor at the level of morphisms. So we get a functor $\qcoh(X, A, L)(B/A) \rightarrow \qcoh(X, P, L_{0})(Q/P)$, and it is easy to check that it is fully faithful. So we now prove that the functor is in fact essentially surjective. 

Let $(E_0,\rho^{E_0})$ be a parabolic sheaf with denominators in $Q/P$ with respect to $L_{0}$. We must prove that the cartesian
functor $(E_0)_X:Q_X\wt\rightarrow \qcoh_{X\et}$ associated with $E_0$ factors trough $B\wt$, and an analogous statement for $\rho^{E_0}$. As in Definition \ref{lem:char-kernel-chart}, let $K_A$ (respectively $K_B$) the kernel of the morphism $P_X\rightarrow A$ (respectively $Q_X\rightarrow B$) induced by $h$ (respectively by $i$). Let $B\pre=Q_{X}/K_{B}$ the quotient presheaf, since $B\wt$ is the stackification of $(B\pre)\wt$, and $\qcoh_{X\et}$ is a stack, it is enough to show that $(E_0)_X$ factors trough $(B\pre)\wt$.

Let us first describe the cartesian category $(B\pre)\wt$ on $X\et$. Above an \'etale morphism $U\rightarrow X$, its objects are by definition elements of
   \[
   (B\pre)(U)\gp=\left(Q_{X}(U)/K_{B}(U)\right)\gp,
   \]
that is, equivalence classes $\cl(u)$ of elements of $u$ in $Q_{X}(U)\gp$ for the equivalence relation $u\sim v$ when there exists elements $k,l$ in $K_B(U)$ such that $u+\iota_Q(k)=v+\iota_Q(l)$, where $\iota_Q$ denotes as usual the morphism $Q_X\rightarrow Q_X\gp$. Maps from $\cl(u)$ to $\cl(v)$ are given by elements $\cl(q)$ of $(B\pre)(U)=Q_{X}(U)/K_{B}(U)$ such that $\cl(u)+\iota(\cl(q))=\cl(v)$.

We now introduce a category $(B\pre)\dwt$ that is a quotient of the Gabriel-Zisman localization of $Q_X\wt$ with respect to maps in $K_B$, such that it is equivalent to $(B\pre)\wt$. Above an \'etale morphism $U\rightarrow X$ objects of $(B\pre)\dwt(U)$ are elements of $Q_{X}(U)\gp$, and maps from $u$ to $v$ are equivalence classes of pairs $\cl((q,k))$, where $q\in Q_X(U)$ and $k\in K_B(U)$ are such that $u+\iota_Q(q)=v+\iota_Q(k)$, for the equivalence relation $(q,k)\sim(q',k')$ if there exists $k_1,k_2\in K_B(U)$ such that $q+k_1=q'+k_2$. 

The cartesian functor $Q_X\wt\rightarrow (B\pre)\wt$ sending an object $u$ to $\cl(u)$ and a morphism $q$ to $\cl(q)$ factors trough a cartesian functor $Q_X\wt\rightarrow (B\pre)\dwt$ sending an object $u$ to itself and a morphism $q$ to $\cl((q,0))$, and trough a cartesian functor $(B\pre)\dwt\rightarrow (B\pre)\wt$ sending an object $u$ to $\cl(u)$ and a morphism $\cl((q,k))$ to $\cl(q)$. One checks immediately that this last functor is well defined and a cartesian equivalence, so this is enough to produce a factorization of $(E_0)_X$ trough $(B\pre)\dwt$. To achieve this, we need the following lemma:

\begin{lemma} Let $U\rightarrow X$ be an \'etale morphism.
	For any $v\in Q_X\gp(U)$ and $k\in K_B(U)$ the morphism $((E_0)_X)_k:((E_0)_X)_v\rightarrow ((E_0)_X)_{\iota_Q(k)+v}$ is an isomorphism.
\end{lemma}

\begin{proof}
	This is a local problem in the \'etale topology, hence by Lemma \ref{lem:char-kernel-chart}, we can assume that there exists a positive integer $m$ and an element $l\in K_A(U)$ such that $mk=l$. Definition \ref{def:para-sheaf} ensures that the diagram:

	   \[
   \xymatrix@C=50pt{
   ((E_0)_X)_{v} \ar[r]^-{((E_0)_X)_{l}} \ar[d]^{\simeq}
      &((E_0)_X)_{\iota_{Q}l+v}\ar[d]^-{\rho^{(E_0)_X}_{l,v}}\\
      \cO_{U} \otimes ((E_0)_X)_v \ar[r]^-{\sigma^{(L_0)_X}_{l} \otimes \id}
      &L_{l}\otimes ((E_0)_X)_v
   }
   \]

   is commutative. Since $l\in K_A(U)$, the morphism  $\sigma^{(L_0)_X}_{l}$ is invertible, moreover the fact that $mk=l$ and the functoriality of $E_0$ show that $((E_0)_X)_l:((E_0)_X)_v\rightarrow ((E_0)_X)_{\iota_Q(l)+v}$ factors trough $((E_0)_X)_k:((E_0)_X)_v\rightarrow ((E_0)_X)_{\iota_Q(k)+v}$, hence this last morphism is left invertible. A similar argument shows that $((E_0)_X)_k$ is right invertible.
\end{proof}
Thanks to the lemma, we can define a cartesian functor $(B\pre)\dwt\rightarrow \qcoh_{X\et}$ that above an \'etale morphism $U\rightarrow X$ sends the arrow $\cl((q,k)): u\rightarrow v$ to the composite of $((E_0)_X)_q:((E_0)_X)_u\rightarrow ((E_0)_X)_{\iota_Q(k)+v}$ with the inverse of $((E_0)_X)_k:((E_0)_X)_v\rightarrow ((E_0)_X)_{\iota_Q(k)+v}$. The functoriality of $E_0$ shows that this is well defined, and produces a factorization of $(E_0)_X$ trough $(B\pre)\dwt$, hence a factorization of $(E_0)_X$ trough $(B\pre)\wt$.

The proof that $\rho^{E_0}$ does also factor trough $A\wt\times B\wt$ is similar, so we omit it. Hence the functor $\qcoh(X, A, L)(B/A) \rightarrow \qcoh(X, P, L_{0})(Q/P)$ we have defined is essentially surjective, and so this is an equivalence.\end{proof}

\subsection{Internal Hom and tensor product}

Let $E,E'$ be two objects of the category $\qcoh(X, P, L_{0})(Q/P)$. First, we define a quasi-coherent sheaf $\bHom(E,E')_0$  on $X$ by the usual rule: for every étale map $U\rightarrow X$, \[\bHom(E,E')_0(U)=\Hom_U(E_{|U},E'_{|U})\]
where $\Hom_U$ is the $\mathcal O(U)$-module of all homomorphisms of parabolic sheaves on $U$ defined in the previous paragraph.

If instead we start from an object $G$ of the category $\qcoh(X)$ and an object $E$ of $\qcoh(X, P, L_{0})(Q/P)$, we can consider the external tensor product $G\otimes E$ as the object of $\qcoh(X, P, L_{0})(Q/P)$ given on objects by the rule: for $v\in Q\gp$,  $(G\otimes E)_v=G\otimes E_v$.

These two operations are related by the formula 
\[\bHom(G\otimes E,E')_0\simeq \bHom(G,\bHom(E,E')_0)\]
 where the second $\bHom$ is the usual internal Hom in $\qcoh(X)$.

Now for $v\in Q\gp$ and $E$ an object of $\qcoh(X, P, L_{0})(Q/P)$, we can define the twist $E[v]$ by the rule: for $v'\in Q\gp$, $E[v]_{v'}=E_{v+v'}$.

 If we start again from two objects $E,E'$ of $\qcoh(X, P, L_{0})(Q/P)$, we have for $u\in P\gp$ and $v\in Q\gp$ canonical isomorphisms 
\begin{align*}
	\bHom(E,E'[u+v])_0 &\simeq \bHom(E[-u],E'[v])_0\\
	&\simeq \bHom(L_{-u}\otimes E,E'[v])_0\\
&\simeq \bHom(L_{-u},\bHom(E,E'[v])_0)\\
&\simeq L_{u}\otimes\bHom(E,E'[v])_0\\
	\end{align*}
This shows that the functor $v\rightarrow \bHom(E,E'[v])_0$ can be endowed with a structure of a parabolic sheaf, denoted by $\bHom(E,E')$. Thus we have an internal Hom in $\qcoh(X, P, L_{0})(Q/P)$, and the definition of the tensor product follows from the standard formula:
\[  \bHom(E\otimes E', E'')\simeq \bHom(E, \bHom(E',E''))\]

Along the same lines, we also can define an internal Hom in $\qcoh(X, A, L)(B/A)$, the only difference being that we can twist only locally. Thus for two objects $E,E'$ of $\qcoh(X, A, L)(B/A)$, we define for $U\rightarrow X$ étale and $v\in B\gp(U)$:
\[\bHom(E,E')_v=\bHom(E_{|U},E'_{|U}[v])_0\]

The tensor product is defined by the formula above.

\section{The main theorem}\label{sec:main-theorem}

In this section we will use the notion of a \df structure on an algebraic stack, which is the immediate generalization of the notion of \df structure on a scheme.

The following is the main result of this paper.

\begin{theorem}\label{thm:main}
Let $(A,L)$ be a coherent \df structure on a scheme $X$ and let $B/A$ be a system of denominators. Then there is a canonical tensor equivalence of abelian categories between the category $\qcoh(X,A,L)(B/A)$ of parabolic sheaves on $(X, A, L)$ with denominator in $B$ and the category $\qcoh X_{B/A}$ of \qc sheaves on the stack $(X,L)_{B/A}$.
\end{theorem}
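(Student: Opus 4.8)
The plan is to reduce the statement to the case of a global chart and there to identify both categories with a common category of graded modules. First I would note that both sides are étale stacks of abelian categories over $X$: a parabolic sheaf is by definition a cartesian functor into $\qcoh_{X\et}$, so such sheaves satisfy descent, and quasi-coherent sheaves on the stack $X_{B/A}$ satisfy descent as well. Hence, by Proposition~\ref{prop:local-parabolic} and Proposition~\ref{prop:local-root-stack}, it suffices to construct the equivalence, compatibly with étale base change so that it glues, in the presence of a chart for $B/A$; that is, to produce a tensor equivalence $\qcoh(X,P,L_{0})(Q/P)\simeq\qcoh X_{Q/P}$ for a Kummer homomorphism $j_{0}\colon P\arr Q$ of finitely generated monoids and a \sm functor $L_{0}\colon P\arr\div X$.

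In this chart situation I would use Remark~\ref{rmk:describe-root-stacks-2} to make the target concrete. Writing $\cA\eqdef\bigoplus_{u\in P\gp}(L_{0})\gp_{u}$ and $\cB\eqdef\cA\otimes_{\ZZ[P]}\ZZ[Q]$, the stack $X_{Q/P}$ is the quotient $[\underspec_{X}\cB/\widehat{Q}]$, so a quasi-coherent sheaf on it is exactly a $Q\gp$-graded quasi-coherent $\cB$-module $N=\bigoplus_{v\in Q\gp}N_{v}$. The dictionary with parabolic sheaves then reads as follows: given such an $N$, set $E_{v}\eqdef N_{v}$, send an arrow $q\colon v\arr v+\iota_{Q}(q)$ of $Q\wt$ (with $q\in Q$) to multiplication by $x^{q}\in\cB_{\iota_{Q}(q)}$, and for $u\in P\gp$ define the pseudo-period isomorphism $\rho^{E}_{u,v}\colon N_{u+v}\simeq(L_{0})\gp_{u}\otimes N_{v}$ as the inverse of the $\cB_{u}$-action. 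The crucial structural point is that, since $\cA$ is graded by the \emph{group} $P\gp$ with invertible summands, every homogeneous piece $\cB_{u}$ with $u\in P\gp$ is an invertible element of the ring $\cB$ with $\cB_{u}\cong(L_{0})\gp_{u}$; this is precisely what forces $\rho^{E}_{u,v}$ to be an isomorphism, and it explains why period isomorphisms are imposed only in degrees lying in $A\wt$. Conversely, from a parabolic sheaf $(E,\rho^{E})$ one assembles $N\eqdef\bigoplus_{v}E_{v}$, using the arrows $E_{q}$ to define the action of $x^{q}$ and the isomorphisms $\rho^{E}_{u,v}$ to define the action of $\cB_{u}$.

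I would then check that these constructions are mutually inverse and that the four coherence conditions of Definition~\ref{def:para-sheaf} translate exactly into associativity and unitality of the $\cB$-action: condition~(i) is the identity $x^{a}=\sigma^{L}_{a}$ in degree $\iota_{P}(a)$ that defines the tensor product $\cA\otimes_{\ZZ[P]}\ZZ[Q]$, condition~(ii) is the commutativity of the $x^{q}$-action with the $\cA$-action, and conditions~(iii) and~(iv) are associativity and the unit normalization for the $\cA$-action. For the monoidal structure I would invoke the module-category interpretation of Remark~\ref{rmk:cat-interpretation1}: under the dictionary the tensor product of parabolic sheaves corresponds to $\otimes_{\cB}$ of graded modules and the internal $\bHom$ to graded $\cB$-module Hom, so the equivalence is a tensor equivalence.

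Finally, to globalize I would verify that the chart-case equivalence is canonical: it is built only from the tautological extension $\Lambda$ and the universal properties of Proposition~\ref{prop:chart->df}, Proposition~\ref{prop:local-root-stack}, and Proposition~\ref{prop:local-parabolic}, hence compatible with restriction to smaller étale neighborhoods and independent of the chart up to canonical isomorphism. A local equivalence of étale stacks of abelian categories that is compatible with base change is a global equivalence, which yields the stated result over all of $X$. I expect the main obstacle to be exactly this last step of compatibility bookkeeping: the explicit identification with graded modules is visibly local, and the work is to make the chart-level functor natural enough in $X$ and in the chart—so that changing the presentation $\cB=\cA\otimes_{\ZZ[P]}\ZZ[Q]$ induces the identity on the glued categories—that it descends to a canonical equivalence.
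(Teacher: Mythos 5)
Your proposal is correct, and at the chart level it coincides with the paper's argument: reduction via Propositions~\ref{prop:local-root-stack} and~\ref{prop:local-parabolic}, identification of $\qcoh X_{Q/P}$ with $Q\gp$-graded $\cB$-modules through Remark~\ref{rmk:describe-root-stacks-2}, the same dictionary ($E_{v}=N_{v}$, arrows given by multiplication by $x^{q}$, pseudo-periods as inverses of the multiplication maps, which are isomorphisms precisely because the summands in degrees $u\in P\gp$ are invertible), the same quasi-inverse $\Psi E=\bigoplus_{v\in Q\gp}E_{v}$, and the same role for condition~(i) in identifying the two $\ZZ[P]$-actions. The one genuine difference is the order of globalization, and it bears exactly on the step you flag as your main obstacle. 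The paper does not glue chart-local equivalences: it first defines the functor $\Phi\colon\qcoh X_{B/A}\arr\qcoh(X,A,L)(B/A)$ globally and chart-freely, by $(\Phi F)_{v}\eqdef\pi_{*}(F\otimes\Lambda_{v})$ using the tautological extension $\Lambda\colon\pi^{*}B\arr\div_{X_{B/A}}$, with pseudo-periods coming from the projection formula. Only afterwards does it observe that both sides assemble into stacks $\qcoh_{X_{B/A}}$ and $\qcoh_{(X,A,L)(B/A)}$ in the \'etale topology and that $\Phi$ extends to a cartesian functor between them, so that being an equivalence is a purely local property; in a chart, $\Phi$ becomes exactly the graded-module functor you describe. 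This buys precisely what your final paragraph would have to pay for: since nothing is glued, there is no chart-independence or cocycle bookkeeping to verify---one only checks a local property of an already-global functor. Your route can be completed (the chart-level equivalence is pinned down by the universal properties you cite), but the paper's order of operations is cheaper, and it also streamlines the tensor statement: compatibility with internal Hom, $\Phi\bHom(F,F')\simeq\bHom(\Phi F,\Phi F')$, is checked degreewise via $\Phi\bHom(F,F')_{v}\simeq\pi_{*}\bHom(F,F'\otimes\Lambda_{v})$ once the equivalence is established, rather than through the module-category dictionary.
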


\begin{proof}
Let us construct a functor $\Phi\colon \qcoh X_{B/A} \arr \qcoh(X,A,L)(B/A)$. Denote by $\pi\colon X_{B/A} \arr X$ the projection. On the stack $X_{B/A}$ we have a canonical \df structure $\Lambda\colon \pi^{*}B \arr \div_{X_{B/A}}$, with an isomorphism of the restriction of $\Lambda$ to $A$ with the pullback of $L$ to $X_{B/A}$. Consider the functor $\Lambda\wt\colon B\wt \arr \pic_{X_{B/A}}$; for each étale morphism $U \arr X$ and each $v \in B\wt(U)$, we set $\Lambda_{v} \eqdef\Lambda\wt(v)$. If $u \in A(U)$, then $\Lambda_{u}$ is canonically isomorphic to $\pi^{*}L_{u}$, where $L_{u} \eqdef L\wt(u)$.

Let $F$ be a \qc sheaf on $X_{B/A}$. We need to define a parabolic sheaf $\Phi F$ on $(X,A,L)$ with coefficients in $B$. For each étale morphism $U\arr X$ and each $v \in B\wt(U)$, we set
   \[
   (\Phi F)_{v} \eqdef \pi_{*}(F \otimes_{\cO_{X_{B/A}}} \Lambda_{v}).
   \]
If $b \in B(U)$ and $v \in B\wt(U)$, the homomorphism $(\Phi F)_{b}\colon (\Phi F)_{v} \arr (\Phi F)_{b+v}$ is induced via $\pi_{*}$ by the homomorphism 
   \[
   F\otimes \Lambda_{v} \simeq F\otimes \Lambda_{v} \otimes \cO
   \xarr{\id_{F\otimes \Lambda_{v}}\otimes \sigma^{\Lambda}_{b}}
   F \otimes \Lambda_{v} \otimes \Lambda_{b}
   \simeq F \otimes \Lambda_{b + v}.
   \]
Given $u \in A(U)$ and $v \in B(U)$, the isomorphism
   \[
   \rho^{\Phi F}_{u,v}\colon (\Phi F)_{u+v} \simeq L_{u} \otimes (\Phi F)_{v}
   \]
is obtained via the following sequence of isomorphisms, using the projection formula for the morphism $\pi$:
   \begin{align*}
   (\Phi F)_{u+v} &=
   \pi_{*}(F \otimes_{\cO_{X_{B/A}}} \Lambda_{u+v})\\
   & \simeq
   \pi_{*}(F \otimes_{\cO_{X_{B/A}}} \Lambda_{v} \otimes \Lambda_{u})\\
      & \simeq
   \pi_{*}(F \otimes_{\cO_{X_{B/A}}} \Lambda_{v} \otimes \pi^{*}L_{u})\\
      & \simeq
   L_{u} \otimes \pi_{*}(F \otimes_{\cO_{X_{B/A}}} \Lambda_{v})\\
         & =
   L_{u} \otimes (\Phi F)_{v}\,.
   \end{align*}
We leave it to the reader to show that the pair $(\Phi F, \rho^{\Phi F})$ is a parabolic sheaf. This function on objects extends to an additive functor
   \[
   \Phi\colon \qcoh X_{B/A} \arr \qcoh(X,A,L)(B/A)
   \]
in the obvious way.

We claim that $\Phi$ is an equivalence. This is a local problem in the étale topology: this can be proved as follows.

First all, if $U \arr X$ is an étale morphism, denote by $U_{B/A}$ the stack of roots of the restriction $L_{U}$ of $L$ to $U\et$ with respect to the restriction $B_{U}$ of $B$. Then there are fibered categories $\qcoh_{X_{B/A}}$ and $\qcoh_{(X,A,L)(B/A)}$, whose fiber categories over an étale morphism $U \arr X$ are $\qcoh U_{B/A}$ and $\qcoh(U,A_{U},L_{U})(B_{U}/A_{U})$ respectively. The functor $\Phi$ defined above extends to a cartesian functor $\qcoh_{X_{B/A}} \arr \qcoh_{(X,A,L)(B/A)}$. Now, the point is that both $\qcoh_{X_{B/A}}$ and $\qcoh_{(X,A,L)(B/A)}$ are stacks in the étale topology. This follows from straightforward but lengthy descent theory arguments, which we omit. Of course, to check that a cartesian functor between stacks is an equivalence is a local problem.

So we may assume that there exists a chart
   \[
   \xymatrix{
   P \ar[r]^{j_{0}}\ar[d]^{h} &Q \ar[d]^{k}\\
   A(X) \ar[r]^{j(X)} & B(X)
   }
   \]
for $B/A$. Set $L_0= h \circ L(X)$; according to Proposition~\ref{prop:local-root-stack} and Proposition~\ref{prop:local-parabolic}, there are equivalences between the categories $\qcoh X_{B/A}$ and $\qcoh X_{Q/P}$, and between $\qcoh(X,A,L)(B/A)$ and $\qcoh(X,P, L_0)(Q/P)$. 

The functor $\qcoh X_{Q/P} \arr \qcoh(X,P,L_0)(Q/P)$, which we still denote by $\Phi$, is described as follows. We still denote by $\pi\colon X_{Q/P} \arr X$ the projection. For each $p \in P$ we denote by $L_{p}$ the invertible sheaf $L_{h(p)}$ on $X$; analogously, if $q \in Q$ we denote by $\Lambda_{q}$ the invertible sheaf on $X_{Q/P}$ corresponding to $\Lambda_{k(q)}$ on $X_{B/A}$. The functor $\Phi\colon \qcoh X_{Q/P} \arr \qcoh(X,P,L_0)(Q/P)$ sends  a \qc sheaf $F$ on $X_{Q/P}$ into $(\Phi F, \rho^{\Phi F})$, where $(\Phi F)_{p} \eqdef \pi_{*}(F \otimes L_{p})$, and $\rho^{\Phi F}$ is defined as above.  We need to check that this functor $\Phi$ is an equivalence.

We use the description of Proposition~\ref{prop:stack-of-charts}. The functor $L_0\colon P \arr \div X$ corresponds to a morphism $X \arr \szp{P}$, i.e., to a $P\gp$-torsor $\eta\colon T \arr X$ with an equivariant morphism $T \arr \sz{P}$. Denote by $A \eqdef \eta_{*}\cO_{T}$ the associated sheaf of $P\gp$-graded $\cO_{X}[P]$-algebras. According to Remark~\ref{rmk:describe-root-stacks-2}, the category $\qcoh X_{Q/P}$ is equivalent to the category of sheaves of $Q\gp$-graded $A\otimes_{\ZZ[P]} \ZZ[Q]$-modules. Set $B \eqdef A\otimes_{\ZZ[P]} \ZZ[Q]$. The functor $\pi$ from \qc sheaves on $X_{Q/P}$ to \qc sheaves on $X$ sends such a sheaf $F$ into the part $F_{0}$ of degree~$0$. Since for each $v \in Q\gp$, the sheaf $\Lambda_{v}$ corresponds to the shifted sheaf $B[v]$, the sheaf $\pi_{*}(\Lambda_{v}\otimes F)$ will be the part $F_{v}$ of degree $v$. 

Then the functor $\Phi$ is interpreted to the functor that sends such a sheaf $F$ of $Q\gp$-graded $A\otimes_{\ZZ[P]} \ZZ[Q]$-modules into the parabolic sheaf $\Phi F\colon Q\wt \arr \qcoh X$ sending $v\in Q\wt$ to $F_{v}$. If $q \in Q$ and $\iota_{Q}q + v = v'$, so that $q$ gives an arrow in the category $Q\wt$, the image $(\Phi F)_{q}\colon F_{v} \arr F_{v'}$ is given by multiplication by $x^{q}$. 

Now, let $u \in P\wt$ and $v \in Q\wt$. The sheaf $L_{u}$ on $X$ is isomorphic to the sheaf $A_{u}$; multiplication gives an isomorphism $A_{u}\otimes_{\cO_{X}}F_{v} \arr F_{u+v}$. Then $\rho^{\Phi F}\colon F_{u+v} \simeq A_{u}\otimes_{\cO_{X}}F_{v}$ is its inverse.

With this description, $\Phi F\colon \qcoh X_{Q/P} \arr \qcoh(X, P, L_0)(Q/P)$ is very easily seen to be an isomorphism. Let us construct a quasi-inverse
   \[
   \Psi\colon \qcoh(X, P, L_0)(Q/P) \arr \qcoh X_{Q/P}\,.
   \]
If $(E, \rho^{E})$ is a parabolic sheaf, we define the \qc sheaf $\Psi E$ on $X$ as the direct sum $\bigoplus_{v \in Q\gp} E_{v}$. The sheaf $\Psi E$ is in fact an $A$-module: since $A = \bigoplus_{u \in P\gp}L_{u}$, we define the homomorphism
   \[
   A \otimes_{\cO_{X}}\Psi E =
   \bigoplus_{\substack{u \in P\gp\\v \in Q\gp}} L_{u} \otimes E_{v}
   \arr \bigoplus_{v \in Q\gp} E_{v}
   \]
via the isomorphisms $(\rho^{E}_{u,v})^{-1}\colon L_{u} \otimes E_{v} \simeq E_{u+v}$. The sheaf $\Psi E$ is also a sheaf of $\ZZ[Q]$-algebras: for each $q \in Q$, we let $x^{q}$ act on $\Psi E$ by sending $E_{v}$ into $E_{\iota_{Q}(q) + v}$ as the homomorphism $E_{q}$. Thus, $\ZZ[P]$ acts on $\Psi E$ in two ways, by the embedding $\ZZ[P] \subseteq \ZZ[Q]$ and via the morphism to $A$ coming from the structure of $A$ as a sheaf of $\cO_{X}[P]$-algebra. Condition~(i) in the definition of a parabolic sheaf (Definition~\ref{def:para-sheaf}) ensures that these two actions coincides, and so gives $\Psi E$ the structure of a $Q\gp$-graded $A \otimes_{\ZZ[P]} \ZZ[Q]$-module, corresponding to a \qc sheaf on $X_{Q/P}$. 

We leave it to the reader to define the action of $\Psi$ on arrows, and show that it gives a quasi-inverse to $\Phi$.

It remains to prove that $\Phi$ is compatible with tensor products. It is enough to show that given $F,F'$ \qc sheaves on $X_{B/A}$, there is  a natural isomorphism: \[ \Phi\bHom(F,F')\simeq \bHom (\Phi(F),\Phi(F'))\]
Let $U\rightarrow X$ be an étale map and $v\in B(U)\gp$. On one hand we have 
\[\Phi\bHom(F,F')_v\simeq \pi_* \bHom(F,F'\otimes \Lambda_v)\]
and on the other hand 
\[\bHom (\Phi(F),\Phi(F'))_v\simeq \bHom(\phi(F)_{|U},\phi(F'\otimes \Lambda_v)_{|U})_0\]
but the equivalence of categories we have just proven shows that these sheaves are canonically isomorphic.
\end{proof}

\begin{example}
Let $(L_{1}, s_{1})$, \dots, $(L_{r}, s_{r})$ be invertible sheaves with sections on a scheme $X$; let $L\colon A \arr \div_{X\et}$ the \df structure that they generate (Definition~\ref{def:generated-df}). By definition, this $L$ has a chart $\NN^{r} \arr A(X)$. Let $d_{1}$, \dots,~$d_{r}$ be positive integers and $Q$ be the monoid $\frac{1}{d_{1}}\NN \times \dots \times\frac{1}{d_{r}}\NN$, with the natural embedding $\NN^{r} \subseteq Q$. The stack $(X, L)_{Q/\NN^{r}}$ is the fibered product
   \[
   \sqrt[\leftroot{-2}\uproot{2}d_{1}]{(L_{1}, s_{1})} \times_{X} \dots
   \times_{X} \sqrt[\leftroot{-2}\uproot{2}d_{r}]{(L_{r}, s_{r})}
   \]
of root stacks (in the sense of \cite[Appendix B]{dan-tom-angelo2008}). Thus we reproved and generalized the correspondence between parabolic sheaves and sheaves on root stacks of \cite{borne-para1} and \cite{borne-para2}.
\end{example}

\bibliography{paralog}
\end{document}